\journal{Journal of \LaTeX\ Templates}
\DeclareMathOperator*{\argmin}{arg\,min}
\def\tred{\textcolor{black}}
\def\tpur{\textcolor{black}}
\newcommand{\Vn}{\mathbf{n}}
\newcommand{\Vb}{\mathbf{b}}
\newcommand{\Vx}{\mathbf{x}}
\newcommand{\R}{\mathbb{R}}
\newcommand{\Pol}{\mathbb{P}}
\newcommand{\Par}{\mathcal{P}}
\newcommand{\Sk}{\mathcal{S}}
\renewcommand{\div}{\mathrm{div}}
\newcommand{\udg}{u_h^\text{dG}}
\newcommand{\vdg}{v_h^{\text{dG},\ast}}
\newtheorem{ssmptn}{Assumption}
\newtheorem{lmm}{Lemma}
\newtheorem{prpstn}{Proposition}
\newtheorem{thrm}{Theorem}
\newtheorem{rmrk}{Remark}
\newtheorem{crllr}{Corollary}
\newcommand{\llbrace}{\lbrace}
\newcommand{\rrbrace}{\rbrace}
\begin{document}

\begin{frontmatter}
\title{\tred{Goal-oriented adaptivity for a \tpur{conforming residual minimization method in a dual discontinuous Galerkin norm}}}

\author[one,two]{Sergio Rojas\corref{mycorrespondingauthor}}
\cortext[mycorrespondingauthor]{Corresponding author}
\ead{srojash@gmail.com}
%
%
\author[seven,eight,nine]{David Pardo}
\author[three]{Pouria Behnoudfar}
\author[one,two,ten]{Victor M. Calo}
\address[one]{Curtin Institute for Computation and School of Electrical Engineering, 
Curtin University, P.O. Box U1987, Perth, WA 6845, Australia
}
\address[two]{Computing and Mathematical Sciences,
Curtin University, P.O. Box U1987, Perth, WA 6845, Australia
}
\address[seven]{University of the Basque Country (UPV/EHU), Leioa, Spain}
\address[eight]{BCAM - Basque Center for Applied Mathematics, Bilbao, Spain}
\address[nine]{IKERBASQUE, Basque Foundation for Science, Bilbao, Spain}
\address[three]{School of Earth and Planetary Sciences, Curtin University, Kent Street, Bentley, Perth, WA 6102, Australia}
\address[ten]{Mineral Resources, Commonwealth Scientific and Industrial Research Organisation (CSIRO), Kensington, Perth, WA 6152, Australia}

\begin{abstract}

\tpur{
We propose a goal-oriented mesh-adaptive algorithm for a finite element method stabilized via residual minimization on dual discontinuous-Galerkin norms.  By solving a saddle-point problem,  this residual minimization delivers a stable continuous approximation to the solution on each mesh instance and a residual projection onto a broken polynomial space, which is a robust error estimator to minimize the discrete energy norm via automatic mesh refinement. In this work, we propose and analyze a goal-oriented adaptive algorithm for this stable residual minimization. We solve the primal and adjoint problems considering the same saddle-point formulation and different right-hand sides. By solving a third stable problem, we obtain two efficient error estimates to guide goal-oriented adaptivity. We illustrate the performance of this goal-oriented adaptive strategy on advection-diffusion-reaction problems.}

\end{abstract}

\begin{keyword}
goal-oriented adaptivity\sep stabilized finite elements \sep residual minimization\sep inf-sup stability\sep  discontinuous Galerkin
\MSC[2010] 65N12\sep 65N30\sep 76M10
\end{keyword}

\end{frontmatter}


\section{Introduction}
\tpur{Adaptive mesh refinements minimize the computational cost of solving boundary value problems when using grid-based numerical methods. In this class of methods, one performs refinements to reduce the error in an energy norm of the problem (see e.g.,~\cite{  babuska1986accuracy}). This energy-norm-driven mesh adaptivity reduces the computational cost of the simulations significantly. Nevertheless, this adaptive strategy is often suboptimal since many engineering problems seek only to approximate a quantity of interest (QoI), which may be only loosely related to the energy norm.  As a consequence,  in the late 90's, the goal-oriented adaptivity (GoA) methodology arose to tackle this problem (see, e.g.,~\cite{  becker1996weighted, prudhomme1999goal, oden2001goal, romkes2006multi, feischl2016abstract, darrigrand2018goal}). In GoA, we first construct an adjoint problem (see~\cite{  oden2001goal}).  We then, represent the error in the quantity of interest as an integral over the entire computational domain that depends upon the solutions of both direct and adjoint problems.} Finally, we use the direct and adjoint solutions to build adequate a posteriori error estimators.

\tpur{A crucial limitation of standard} finite element methods is that they can suffer from instability on coarse meshes \tred{(e.g., advection-dominated problems or first-order partial differential equations)}. Thus, adaptivity with standard finite elements is not feasible in several scenarios since stability is critical for a posteriori error estimation. Several alternative Galerkin methods exist that are stable on coarse meshes (see, e.g.,~\cite{ ern2013theory, Ern_Guermond_16, HUGHES2017} and references therein). 

\tpur{In~\cite{  rojas2019adaptive}, the authors present an adaptive-stabilized finite element method (FEM), which combines the idea of residual minimization (a core idea of several stabilization methods since the '60s; e.g., least-squares FEM (LS-FEM)~\cite{ Diska:68, Lucka:69, BrSc70, Jiang99} and Galerkin/least-squares (GaLS) method~\cite{  HFH89}) with the discontinuous Galerkin (dG) mathematical framework, where stability derives from the enlargement of the continuous trial space with discontinuous functions and the addition of penalization terms~\cite{  ReeHi:73, LesRa:74, JohPi:86, CoKaS:00, BrMaS:04, Ern2006, di2011mathematical}.  The adaptive-stabilized finite element method (AS-FEM)~\cite{ rojas2019adaptive} minimizes a discrete residual in a dG norm.  That is, starting from an inf-sup stable dG formulation, the method minimizes the residual in an adjoint norm (to the dG test functions) over a conforming trial space (e.g., a standard finite element space). This residual minimization problem is equivalent to a saddle-point problem that inherits the dG inf-sup stability. Thus, the method delivers solutions of the same quality as those associated with the underlying dG formulation. From a practical point of view, the resulting mixed formulation delivers two significant benefits: A stable approximation of the solution in the trial space of continuous (conforming) functions, and a projection of the residual onto the discontinuous dG test space (error estimate). This method has similarities with the Discontinuous Petrov-Galerkin (DPG) methods since both technologies minimize the residual in a non-standard norm (see, e.g.,~\cite{  demkowicz2010class, demkowicz2012class, demkowicz2011class, zitelli2011class, DemHeuSINUM2013, ChaHeuBuiDemCAMWA2014, DemGopBOOK-CH2014}).  Nevertheless, AS-FEM builds on non-conforming (dG) formulations, which allows us to use stronger norms than those of DPG when the trial space contains continuous functions; application examples include diffusive-advective-reactive problems~\cite{  cier2020automatic}, incompressible Stokes flows~\cite{  los2020stable, kyburg2020incompressible}, continuation analysis of compaction bandings in geomaterials~\cite{   cier2020adaptive}, and weak constraint enforcement for advection-dominated diffusion problems~\cite{  cier2020nonlinear}.}

\tpur{Herein, we extend and analyze the method proposed in~\cite{ rojas2019adaptive} to goal-oriented adaptivity (GoA). We describe a general theory that applies to any problem where a well-posed discontinuous Galerkin formulation for the primal problem is available and demonstrate its numerical performance for advection-diffusion-reaction problems. We define the corresponding discrete adjoint system as a saddle-point problem, where its solution is constrained by the conforming space and is discontinuous across element faces. The same dG inf-sup arguments guarantee the well-posedness of this adjoint saddle-point problem. Solving both the primal and the adjoint problem requires the solution of a single saddle-point problem with two right-hand sides. Unfortunately, the discontinuous adjoint solution satisfies extra constraints imposed by the conforming variable, which does not estimate the residual.  We propose two alternative stable discrete problems that allow us to measure the error of the adjoint discrete problem. As a result, we obtain an automatic, easy-to-implement, and stable GoA strategy that complements the formulation of~\cite{ rojas2019adaptive}. Moreover, we show through numerical experimentation that these two strategies deliver the optimal GoA convergence rates for diffusion problems~\cite{ feischl2016abstract}.}
 
\tpur{Our GoA strategy is similar to a recent DPG theory~\cite{  keith2019goal}, where they solve the adjoint problem in terms of the original saddle-point formulation with a different right-hand side.  For advection-diffusion-reaction, several works explore the use of conforming FEM stabilization schemes~(see~\cite{  formaggia2004anisotropic, formaggia2001anisotropic, burman2017error, kuzmin2010goal, bruchhauser2017numerical}). Typically, in conforming schemes, the Dual Weighted Residual (DWR) method~\cite{  becker1996feed, becker2001optimal} allows for an efficient post-processing strategy. Recently,~\cite{  dolejvsi2017goal, bartovs2019goal} proposed a DWR method for the discontinuous Galerkin SIP method~\cite{  arnold1982interior}. Finally,~\cite{  mozolevski2015goal} considered a conforming approximation of the primal problem and a dG approximation of the adjoint problem, where the GoA estimates combine the DWR and equilibrated-flux reconstruction methods.}

\tpur{The remainder of the paper continues as follows. Section~\ref{sec:model} introduces the advection-diffusion-reaction model problem, together with its discontinuous Galerkin formulation to facilitate the understanding of the framework we introduce in the next sections.  Section~\ref{sec:mixed_primal} describes the adaptive stabilized finite element method that defines the discrete primal problem.  Section~\ref{sec:adjoint_problems} introduces the adjoint problems and Section~\ref{sec:goal_estimator} details the a-posteriori error analysis for the proposed strategy. Section~\ref{sec:GoA_strategy} describes the GoA algorithm,  while Section~\ref{sec:numerical_examples}, shows the performance of the method in advection-diffusion-reaction problems. Finally, Section~\ref{sec:conclusion} details our contributions and describes future lines of work.}

\section{Model problem: A advection-diffusion-reaction problem}\label{sec:model}
\subsection{Continuous setting}
Let $\Omega \subset \R^d$, with $d = 2,3$, be an open and bounded Lipschitz domain with boundary $\Gamma :=\partial \Omega$, and denote by $\Vn$ its outward unit normal vector. Using the standard notation for Sobolev and Lebesgue spaces and their norms, let $\kappa > 0$ be a diffusion coefficient in $\Omega$, $\Vb \in \left[W^{1,\infty}(\Omega)\right]^d$ a divergence-free advective velocity field, and $\gamma \geq 0$ a reaction coefficient. As model problem, we consider the following advection-diffusion-reaction problem:
\begin{align}\label{eq:scalar}
\left\{\begin{array}{l}
\text{Find } u \text{ such that:} \smallskip \\
\begin{array}{rl}
-\div \big( \kappa \nabla u \big) + \Vb \cdot \nabla u + \gamma \, u = f, & \text{ in } \Omega, \smallskip\\
u=g_D, & \text{ on } \Gamma, 
\end{array}
\end{array}\right.
\end{align}
where $f \in L^2(\Omega)$ is a spatial source and $g_D \in H^{1/2}(\Gamma)$ defines the boundary data. If $g_D \equiv 0$, the weak variational formulation of problem~\eqref{eq:scalar} reads:
\begin{equation}\label{eq:cont_dar}
\left\{\begin{array}{l}
\text{Find } u \in U,  \text{ such that:}\\
\displaystyle b(u, v) := \int_{\Omega} \big(\kappa \nabla u \cdot \nabla v - u \, (\Vb \cdot \nabla v) + \gamma u \, v \big) = l(v):=\int_\Omega f v, \quad \forall \, v \in V,
\end{array}
\right.
\end{equation}
with energy space $U = V := H_0^1(\Omega)= \left\{v \in H^1(\Omega) \, : \, v|_{\Gamma} = 0 \right\}$.
%
%
Denoting by $\displaystyle \|v\|^2_{\Omega} = \int_{\Omega} v^2$ the standard $L^2$-norm, and considering the $H^1$-norm:
\begin{equation}
\|v\|^2_{1,\Omega} := \|\nabla v\|^2_{\Omega} + \|v\|^2_{\Omega},
\end{equation}
 a straightforward verification shows that the bilinear form $b(\cdot,\cdot)$ in~\eqref{eq:cont_dar} is bounded in $V \times V$, that is, there exists a constant $M > 0$, such that:
\begin{equation}
b(u,v)\leq M \|u\|_{1,\Omega}\|v\|_{1,\Omega}, \quad \forall \, u,v \in V.
\end{equation}
This bilinear form is also coercive in $V$, that is, there exists a constant $\alpha>0$, such that:
\begin{equation}\label{eq:coercivity_cont}
b(v,v) \geq \alpha \|v\|^2_{1,\Omega}, \quad \forall v\in V.
\end{equation}
Since $\Vb$ is divergence-free, it holds:
\begin{equation}\label{eq:int_by_parts_adv}
\int_\Omega w  \, (\Vb \cdot \nabla v) = -\int_\Omega (\Vb \cdot \nabla w)  \,  v, \quad \forall w,v \in V,
\end{equation}
and thus $\displaystyle \int_{\Omega} v \, (\Vb \cdot \nabla v) = 0$, for all $v \in V$. Therefore, well-posedness for the weak problem~\eqref{eq:bilinear_dar} is proved by evoking the Lax-Milgram theorem (cf.~\cite{ evans10}). Finally, when $g_D$ does not vanish in $\Gamma$, the weak variational formulation of problem~\eqref{eq:scalar} is equivalently obtained in terms of the auxiliary variable $\widetilde{u} := u - w_D$, with $w_D \in H^1(\Omega)$ a function satisfying $w_D = g_D$ in $H^{1/2}(\Gamma)$. The existence of $w_D$ is guaranteed as consequence of the surjectivity of the Dirichlet map from $H^1(\Omega)$ to $H^{1/2}(\Gamma)$.  
\subsection{Discrete setting}\label{sec:disc_sett}
\begin{figure}[t!]
\centering
\includegraphics[scale=0.9]{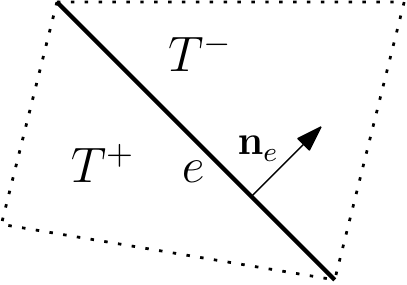}
\caption{Skeleton orientation over the internal face $e = \partial T^+ \cap \partial T^-$.}
\label{fig:skeleton}
\end{figure}
Let $\Par_h$ be a conforming partition of the domain $\Omega$ into open elements $T \subset \Omega$, such that:
\begin{equation}
\Omega_h := \bigcup_{T \in \Par_h} T, \quad \text{ satisfies } \quad \Omega = \text{int}\big(\overline{\Omega_h}\big).
\end{equation}
For any $T\in \Par_h$, we denote by $h_T$ its diameter and by $\partial T$ its boundary. Let $\Sk^0_h$ be the set of interior edges/faces obtained from the intersection of two adjoint elements, as shown in Figure~\ref{fig:skeleton}. Also, $\Sk^\partial_h$ are the edges/faces that belong to the boundary $\partial \Omega_h$, and $\Sk_h := \Sk^0_h \cup \Sk^\partial_h$ form the skeleton of $\Omega_h$.
%
%
For $k\geq1$, we define the following standard broken Hilbert spaces over $\Omega_h$:
\begin{align}
H^k(\Omega_h) &:= \left\{ v \in L^2(\Omega) \, : \, v \in H^k(T), \, \forall \, T \in \Par_h \right\}.
\end{align}
For any $v_h \in H^1(\Omega_h)$, we define the jump function $\llbracket v_h \rrbracket(\Vx)$ and the average function $\llbrace v_h \rrbrace(\Vx)$ respectively, restricted to an interior face/edge $ e = \partial K^+ \cap \partial K^- \in \Sk^0_h$, as:
\begin{align}
\llbracket v_h \rrbracket(\Vx)\vert_e := v^{+}_h(\Vx) - v^{-}_h(\Vx), \quad
\llbrace v_h \rrbrace(\Vx)\vert_e := \dfrac{1}{2} (v^+_h(\Vx) + v^-_h(\Vx)), \quad \forall e \in \Sk_h^0, \label{eq:jump_and_average}
\end{align} 
where $v^+_h$, $v^-_h$ denote the traces over $e = \partial T^+ \cap \partial T^-$ with respect to a predefined normal ${\bf n}_e$, as shown in Figure~\ref{fig:skeleton}. We extend the definitions of~\eqref{eq:jump_and_average} to $e \in \Sk_h^\partial$ by setting:
\begin{align}
\Vn_e(\Vx) := \Vn(\Vx), \quad \llbracket v_h \rrbracket(\Vx)\vert_e := \llbrace v_h \rrbrace(\Vx)\vert_e := v_h(\Vx)|_e \quad \forall e \in \Sk_h^\partial, \label{eq:jump_and_average_border}
\end{align} 
where $\Vn$ denotes the outward normal to $\partial \Omega$.  Let $\Pol^{p_t}(T)$ be the set of polynomials of degree $p_t\geq 1$ over the element $T$. We consider the following broken polynomial space:
\begin{align}
\Pol^{p_t}(\Omega_h) &:= \left\{ v \in L^2(\Omega) \, : \, v\vert_{T} \in \Pol^{p_t}(T), \, \forall \, T \in \Par_h \right\}.
\end{align}
For a given ${p_t} \geq 1$, we denote $V_h := \Pol^{p_t}(\Omega_h)$ endowed with a discrete norm $\|\cdot \|_{V_h}$.
\subsection{DG variational formulations}\label{sec:model_dg}
We now introduce two possible dG formulations (in primal form) for advection-diffusion-reaction problems. We analyze their inf-sup stability in terms of a discrete norm we generalize from~\cite{ shahbazi2005explicit,ayuso2009discontinuous, di2008discontinuous, riviere2008discontinuous}.  For detailed discussions of dG schemes for elliptic problems, see for instance~\cite{ arnold2002unified,di2011mathematical,riviere2008discontinuous}.
\tred{For a given polynomial degree $p_t\geq1$ and $\epsilon =\pm 1$, we consider a dG formulation for  problem~\eqref{eq:cont_dar} of the form:}
\begin{equation}\label{eq:dg_load}
\left\{\begin{array}{l}
\text{Find } \udg \in V_h,  \text{ such that:} \\
b_h(\udg, v_h) = l_h (v_h), \quad \forall \, v_h \in V_h,
\end{array}
\right.
\end{equation}
where the discrete linear form $l_h(\cdot)$, and bilinear form $b_h(\cdot,\cdot)$ are defined as:
\begin{align}\label{eq:linear_dar}
l_h(v_h) = \sum_{T\in \Par_h}\int_T f \, v_h + \sum_{e \in \Sk^\partial_h} \Big(\epsilon \int_e \kappa \nabla v_h \cdot n_e \, g_D + \tred{\int_e \big(( \Vb \cdot \Vn_e)^\ominus + \kappa \, \eta_e \big) \, g_D \, v_h \Big)},
\end{align}
and
\begin{equation}\label{eq:bilinear_dar}
b_h(w_h,v_h) := b_h^{\epsilon}(w_h,v_h) + b_h^{\textrm{up}}(w_h,v_h),
\end{equation} 
with
\begin{align}\label{eq:sip_bh}
b_h^{\epsilon}(w_h,v_h) &:= \sum_{T \in \Par_h} \int_T \kappa \nabla w_h \,  \nabla v_h
+ \sum_{e \in \Sk_h} \int_e \kappa \, \eta_e \, \llbracket w_h \rrbracket \,  \llbracket v_h \rrbracket \nonumber\\
&  - \sum_{e \in \Sk_h}\int_e \llbrace \kappa \nabla w_h \rrbrace \cdot n_e  \,  \llbracket v_h \rrbracket + \epsilon \int_e \llbracket w_h \rrbracket \,   \llbrace \kappa \nabla v_h \rrbrace \cdot n_e ,\\
\label{eq:up_bh}
b_h^{up}(w_h,v_h)  &:=   \sum_{T\in\Par_h}\int_T(\Vb \cdot \nabla  w_h + \gamma \,  w_h) \, v_h + \sum_{e \in \Sk^\partial_h}\int_e ( \Vb \cdot \Vn_e)^\ominus w_h \,  v_h  
\nonumber\\ & 
- \sum_{e \in \Sk_h^0} \int_e(\Vb \cdot \Vn_e) \, \llbracket w_h \rrbracket \, \llbrace v_h \rrbrace + \dfrac{1}{2} \sum_{e \in \Sk_h^0} \int_e \big|\Vb \cdot \Vn_e\big| \, \llbracket w_h \rrbracket \, \llbracket v_h \rrbracket. 
\end{align}
\tred{Here, $x^\ominus = \dfrac{1}{2}(|x| - x) = \max\{-x,0\}$ denotes the negative part of $x$, and $\eta_e>0$ is a user-defined stabilization parameter that, following~\cite{ shahbazi2005explicit}, we define explicitly as
\begin{equation}
\eta_e  := \frac{(p_t+1)(p_t+d)}{d}\left\{\begin{array}{rl} 
\dfrac{1}{2}\big(\dfrac{\mathcal{A}(\partial T^+)}{\mathcal{V}(T^+)} + \dfrac{\mathcal{A}(\partial T^-)}{\mathcal{V}(T^-)}\big), & \text{ if } e = \partial T^+ \cap \partial T^-, \smallskip\\
\dfrac{\mathcal{A}(\partial T)}{\mathcal{V}(T)}, & \text{ if } e = \partial T \cap \partial \Omega_h,
\end{array} \right.
\end{equation}
where $\mathcal{A}, \mathcal{V}$ denote the area and volume, respectively, for $d=3$, and the length and area, respectively, for $d = 2$.}
\begin{rmrk}[Advection-dominated case]\label{rem:adv_dom}
When $\kappa << \|\Vb\|_{\infty,\Omega}$, the bilinear form $b_h$ behaves as the bilinear form $b_h^\textrm{up}$. Thus, the discrete solution is close to the discrete approximation of the following continuous problem:
\begin{align}\label{eq:advection_reaction}
\left\{\begin{array}{l}
\text{Find } u \text{ such that:} \smallskip \\
\begin{array}{rl}
\Vb \cdot \nabla u + \gamma \, u = f, & \text{ in } \Omega, \smallskip\\
u=g^-, & \text{ on } \Gamma^-, 
\end{array}
\end{array}\right.
\end{align}
where $\Gamma^-:=\{ \Vx \in \Gamma\, : \Vb(\Vx) \cdot n(\Vx) <0 \}$ is
the inflow boundary of $\Omega$ and $g^-=g_D |_{\Gamma^-}$ is the
Dirichlet boundary data restricted to $\Gamma^-$ with $(\Vb \cdot \Vn)^\ominus \neq 0$ on $\Gamma^-$ only. 
\end{rmrk}
The discrete formulation couples two independent inf-sup stable schemes. First, the upwind scheme (up) handles the advective-reaction part of the equation (see~\cite{ brezzi2004discontinuous} and Remark~\ref{rem:adv_dom}). Second, an $\epsilon$-dependent formulation encompasses two alternative schemes for the diffusion part: a) the Symmetric Interior Penalization (SIP) when $\varepsilon = -1$ (see~\cite{ arnold1982interior}), and b) the Non-symmetric Interior Penalization (NIP) when $\epsilon=1$ (see~\cite{ riviere1999improved}). \tred{The discrete formulation is also consistent as it satisfies the following Lemma:
  \begin{lmm}\label{lmm:dar_consistency}
    If the analytical solution $u$ of problem~\eqref{eq:cont_dar} belongs to the subspace $U_\# := U \cap H^2(\Omega)$, then:
    \begin{equation}\label{eq:dar_consistency}
      b_h(u\, , v_h) = l_h(v_h), \, \forall \, v_h \in V_h.
    \end{equation}
  \end{lmm}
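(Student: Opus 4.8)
The plan is to insert the exact solution $u \in U_\#$ into $b_h(u,v_h)$ and recover $l_h(v_h)$ through element-wise integration by parts together with the regularity of $u$. Two structural facts drive the argument. First, since $u \in U \subset H^1(\Omega)$ is globally continuous, all interior jumps vanish: $\llbracket u \rrbracket|_e = 0$ for every $e \in \Sk_h^0$, while on boundary faces the convention~\eqref{eq:jump_and_average_border} gives $\llbracket u \rrbracket|_e = u|_e = g_D$. Second, since $u \in H^2(\Omega)$, the diffusive flux $\kappa\nabla u$ has a single-valued normal trace across interior faces, so $\llbracket \kappa\nabla u\cdot\Vn_e\rrbracket|_e = 0$ and $\llbrace \kappa\nabla u\rrbrace\cdot\Vn_e = \kappa\nabla u\cdot\Vn_e$ there.

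First I would handle the diffusion part $b_h^{\epsilon}(u,v_h)$. Continuity of $u$ kills the penalty term and the $\epsilon$-term on every interior face; on boundary faces these reduce, using $\llbracket u\rrbracket = g_D$, to $\sum_{e\in\Sk_h^\partial}\int_e \kappa\eta_e\, g_D\, v_h$ and $\epsilon\sum_{e\in\Sk_h^\partial}\int_e \kappa\nabla v_h\cdot\Vn_e\, g_D$, precisely the two diffusive contributions of $l_h$. The volume term is integrated by parts element by element, producing $-\sum_{T}\int_T \div(\kappa\nabla u)\,v_h$ plus a skeleton flux $\sum_{T}\int_{\partial T}(\kappa\nabla u\cdot\Vn_T)\,v_h$. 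Applying the standard DG identity
\[
\sum_{T\in\Par_h}\int_{\partial T}\phi\cdot\Vn_T\,\psi = \sum_{e\in\Sk_h^0}\int_e\big(\llbracket\phi\cdot\Vn_e\rrbracket\llbrace\psi\rrbrace + \llbrace\phi\cdot\Vn_e\rrbrace\llbracket\psi\rrbracket\big) + \sum_{e\in\Sk_h^\partial}\int_e\phi\cdot\Vn_e\,\psi,
\]
with $\phi=\kappa\nabla u$ and the flux continuity above, the interior skeleton sum reduces to $\sum_{e\in\Sk_h^0}\int_e\llbrace\kappa\nabla u\rrbrace\cdot\Vn_e\,\llbracket v_h\rrbracket$, which cancels the interior consistency term exactly, while the boundary flux cancels the boundary consistency term.

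Next I would treat the advection-reaction part $b_h^{up}(u,v_h)$ analogously: its volume term $\sum_{T}\int_T(\Vb\cdot\nabla u+\gamma u)\,v_h$ is retained, the interior jump terms vanish by continuity of $u$, and only the inflow boundary term survives, which with $u=g_D$ yields $\sum_{e\in\Sk_h^\partial}\int_e(\Vb\cdot\Vn_e)^\ominus g_D\, v_h$, the remaining boundary contribution of $l_h$. Collecting the surviving volume integrals from both parts gives $\sum_{T}\int_T\big(-\div(\kappa\nabla u)+\Vb\cdot\nabla u+\gamma u\big)v_h = \sum_{T}\int_T f\, v_h$ by the strong equation~\eqref{eq:scalar}, while the boundary integrals reassemble exactly into the boundary part of $l_h$ in~\eqref{eq:linear_dar}.

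The main obstacle is the skeleton bookkeeping: fixing the orientation of $\Vn_e$ so that the interior consistency term cancels against the element-boundary flux in the DG identity, and verifying that the three surviving boundary integrals (from the penalty term, the $\epsilon$-term, and the upwind inflow weighting, each evaluated with $\llbracket u\rrbracket = g_D$) coincide term-by-term with the boundary contributions of $l_h$. The volume identity is immediate once the strong form is invoked; essentially all of the work lies in the face terms and the sign conventions for the negative-part inflow weight $(\Vb\cdot\Vn_e)^\ominus$.
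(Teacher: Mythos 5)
Your proof is correct and is essentially the proof the paper leaves implicit: Lemma~\ref{lmm:dar_consistency} is stated without proof as a classical dG consistency result (cf.~\cite{di2011mathematical, riviere2008discontinuous}), and the standard derivation is exactly your argument --- elementwise integration by parts, vanishing interior jumps $\llbracket u \rrbracket = 0$ from $H^1$-conformity, single-valuedness of the normal flux $\kappa\nabla u\cdot\Vn_e$ from $H^2$-regularity, and term-by-term matching of the surviving boundary integrals ($\kappa\,\eta_e\, g_D\, v_h$, $\epsilon\,\kappa\nabla v_h\cdot\Vn_e\, g_D$, and $(\Vb\cdot\Vn_e)^\ominus g_D\, v_h$) with $l_h$ in~\eqref{eq:linear_dar}. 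The only pedantic caveat is that~\eqref{eq:cont_dar} is posed with $g_D\equiv 0$, so the boundary terms you track vanish in that case; your general-$g_D$ bookkeeping with $u=g_D$ on $\Gamma$ proves the (stronger) statement that the $g_D$-dependent forms~\eqref{eq:linear_dar}--\eqref{eq:bilinear_dar} are built to satisfy, which is clearly the intended reading.
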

 }\tpur{\begin{rmrk}[Conformity \& consistency]
 Conforming formulations satisfy the identity~\eqref{eq:dar_consistency}; however, the non-conformity of the discrete space $V_h$ implies that~\eqref{eq:dar_consistency} is not satisfied in general for all dG formulations (see e.g.,~\cite{ arnold2002unified}). 
\end{rmrk}
}
%
\begin{rmrk}[Generalizations]
We assume $\kappa$ to be constant, and $\Vb$ to be a divergence-free velocity field, and the solution $u$ to satisfy Dirichlet boundary conditions to simplify the discussion. The use of heterogeneous and non-isotropic diffusion coefficients, non-solenoidal advective fields, and non-homogeneous Robin-type boundary conditions require slight modifications of the bilinear and linear forms~\cite{ di2011mathematical}.
\end{rmrk}
\subsubsection{$V_h$-norm, inf-sup stability, boundedness, and a priori error estimates}\label{sec:inf_sup_dar}
For the discrete space $V_h$, we consider the following induced norm:
\begin{equation}\label{eq:vh_norm}
\|\cdot\|_{V_h}^2 = (\cdot\, ,\, \cdot)_{V_h}:= (\cdot\, ,\, \cdot)_{\epsilon} + (\cdot\, ,\, \cdot)_{\textrm{up}},
\end{equation}
 where, for any $w_h, v_h \in V_h$, we define: 
\begin{align}
  (w_h \, , \, v_h)_{\epsilon} := & \sum_{T \in \Par_h} \int_T \kappa \, 
                                    \nabla w_h \cdot \nabla v_h +
                                    \sum_{e \in \Sk_h} \int_e \kappa \,
                                    \eta_e \,
                                    \llbracket w_h \rrbracket \,
                                    \llbracket v_h \rrbracket, \\
  (w_h \, , \, v_h)_{\textrm{up}} := & \sum_{T \in \Par_h} \int_T
                                       (\gamma + \beta L^{-1} ) w_h \,
                                       v_h +\displaystyle 
                                       \sum_{T \in \Par_h} \beta_l \, h_T \int_T
                                       (\Vb \cdot \nabla w_h )( \Vb
                                       \cdot \nabla v_h)
  + \sum_{e \in \Sk_h} \dfrac{1}{2} \int_e |\Vb \cdot \Vn_e|
     w_h \,  v_h   ,   
\end{align}
with $\beta := \|\Vb\|_{\left[L^\infty(\Omega)\right]^d}$, $L$ is the diameter of the domain $\Omega$ (i.e., the diameter of the largest circumference contained in $\Omega$), and $\beta_l$ is defined as:
\begin{equation}
  \beta_l := \left\{ \begin{array}{rl}
                       \beta^{-1}, & \text{ if } \beta > 0, \smallskip\\
                       0, & \text{ if } \beta = 0.
\end{array}
\right.
\end{equation}
\begin{rmrk}[Vanishing advection consistency]
The norm definition~\eqref{eq:vh_norm} is consistent in the limit case $\beta \rightarrow 0^+$ since, for all $v_h \in V_h$, it holds:
\begin{equation*}
\beta_l \sum_{T \in \Par_h} h_T \int_T (\Vb \cdot \nabla v_h )^2 \leq \beta_l \, \beta^2 \sum_{T \in \Par_h} h_T \int_T |\nabla v_h|^2 \rightarrow 0^+, \, \text{ when } \, \beta \rightarrow 0^+.
\end{equation*}
\end{rmrk}
Before discussing  the proof of the inf-sup stability of the formulations, we recall the sufficient conditions to ensure the coercivity of bilinear forms~\cite{  riviere2008discontinuous}:
\begin{lmm}[Coercivity for pure diffusion]\label{lmm:dar_coercivity}
%
%
\tred{For $\epsilon \in \{-1,1\}$ and $p_t\geq 1$, the bilinear form $b_\epsilon(\cdot, \cdot)$ in~\eqref{eq:sip_bh} is coercive in $V_h$ with respect to the norm $\|\cdot\|_{\epsilon}^2 := (\cdot\, ,\cdot)_{\epsilon}$ (cf.~\eqref{eq:coercivity_cont}), with stability constant equal to $1$ if \, $\epsilon = 1$, and $1/2$ if \, $\epsilon = -1$.}
\end{lmm}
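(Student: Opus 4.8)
The plan is to test the form on the diagonal: I set $w_h=v_h$ in~\eqref{eq:sip_bh} and compare the outcome with the squared norm $\|v_h\|_\epsilon^2=(v_h,v_h)_\epsilon$. With both arguments equal, the first two terms of $b_h^\epsilon$ are \emph{literally} the two terms defining $(\cdot,\cdot)_\epsilon$, namely the broken-gradient energy $\sum_{T\in\Par_h}\int_T\kappa\,|\nabla v_h|^2$ and the penalty energy $\sum_{e\in\Sk_h}\int_e\kappa\,\eta_e\,\llbracket v_h\rrbracket^2$. The two remaining (consistency) terms collapse into a single expression carrying the factor $(\epsilon-1)$, so that
\begin{equation*}
b_h^\epsilon(v_h,v_h)=\|v_h\|_\epsilon^2+(\epsilon-1)\sum_{e\in\Sk_h}\int_e\llbrace\kappa\nabla v_h\rrbrace\cdot n_e\,\llbracket v_h\rrbracket,\qquad\forall\,v_h\in V_h.
\end{equation*}
The entire proof then reduces to controlling this single cross term for the two admissible values $\epsilon=\pm1$.

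For the non-symmetric choice $\epsilon=1$ the prefactor $(\epsilon-1)$ vanishes identically, hence $b_h^{1}(v_h,v_h)=\|v_h\|_\epsilon^2$ and coercivity holds with constant $1$ with no estimate required. The symmetric choice $\epsilon=-1$ is the substantive case: there the prefactor equals $-2$, and I must show that the symmetric flux term cannot absorb more than half of $\|v_h\|_\epsilon^2$. First I would apply a weighted Cauchy--Schwarz inequality face by face, pairing $\llbrace\kappa\nabla v_h\rrbrace\cdot n_e$ against $\llbracket v_h\rrbracket$ with the penalty weight $\kappa\,\eta_e$, so that one factor is immediately controlled by the penalty part of $\|v_h\|_\epsilon^2$ and the other becomes $\sum_{e\in\Sk_h}\tfrac1{\kappa\eta_e}\,\|\llbrace\kappa\nabla v_h\rrbrace\cdot n_e\|_e^2$. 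Next I would estimate this normal-flux sum by the discrete trace inequality of~\cite{shahbazi2005explicit}, $\|\nabla v_h\|_{\partial T}^2\le\frac{(p_t+1)(p_t+d)}{d}\frac{\mathcal{A}(\partial T)}{\mathcal{V}(T)}\,\|\nabla v_h\|_{T}^2$, reassembling the face integrals over $\Sk_h^0\cup\Sk_h^\partial$ into whole element-boundary norms and treating interior and boundary faces separately. This is exactly the point where the explicit definition of $\eta_e$ enters: it is designed so that the averaging factor $\tfrac12$, the two-sided geometric mean in the interior-face penalty, and the trace constant combine to bound the normal-flux sum by a controlled fraction of the broken-gradient energy. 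Finally I would close with Young's inequality (equivalently, an arithmetic--geometric-mean step on the product of the two energies), choosing the free parameter so that the absorbed flux term leaves precisely the residual constant $1/2$.

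The main obstacle is this last quantitative step for $\epsilon=-1$: extracting the \emph{sharp} constant $1/2$ rather than merely a positive one. A crude application of the trace inequality (splitting the average through $(a+b)^2\le 2(a^2+b^2)$ and retaining only one side of the geometric mean in $\eta_e$) overcounts by a bounded factor and yields only marginal positivity; obtaining $1/2$ requires keeping the squared averaging factor, both sides of the geometric mean, and the exact Shahbazi constant simultaneously while summing over $\Par_h$. The restriction to the pure-diffusion form $b_h^\epsilon$ is what makes this tractable, since no advective or reactive contribution interferes with the flux estimate, and the norm $\|\cdot\|_\epsilon$ against which the constants $1$ and $1/2$ are measured is precisely the one spanned by the two surviving diagonal terms above.
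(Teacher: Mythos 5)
Your diagonal identity and the $\epsilon=1$ case are correct and complete: with $w_h=v_h$ the two consistency terms combine with the prefactor $(\epsilon-1)$, so for NIP they cancel exactly and $b_h^{1}(v_h,v_h)=\|v_h\|_\epsilon^2$ with constant $1$. Note, for calibration, that the paper itself offers no proof of this lemma; it is recalled from~\cite{riviere2008discontinuous} (with the explicit $\eta_e$ of~\cite{shahbazi2005explicit}), so your argument must stand on its own --- and for $\epsilon=-1$ it does not yet. You correctly identify the crux (showing the flux term absorbs at most half of $\|v_h\|_\epsilon^2$) and even concede that the crude estimate gives only marginal positivity, but you never carry out the sharper computation, and the quantitative target is in fact out of reach of the tools you name. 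In your own scheme, writing
\begin{equation*}
A^2:=\sum_{e\in\Sk_h}\dfrac{1}{\kappa\,\eta_e}\,\big\|\llbrace\kappa\nabla v_h\rrbrace\cdot\Vn_e\big\|_{e}^2\;\le\; C_A\sum_{T\in\Par_h}\kappa\,\|\nabla v_h\|_{T}^2,
\end{equation*}
the Young step yields the coercivity constant $1-\sqrt{C_A}$ (optimize $2AB\le tA^2+t^{-1}B^2$), so the claimed $1/2$ is exactly equivalent to $C_A\le 1/4$. But with the paper's $\eta_e$ and the degree-$p_t$ trace constant, the best per-face bound $\|\llbrace w\rrbrace\|_e^2\le\frac12\big(\|w^+\|_e^2+\|w^-\|_e^2\big)$ followed by element-wise reassembly gives only $C_A\le 1$ in general --- precisely the marginal positivity you mention --- and $C_A\le 1/2$ (hence constant $\approx 0.29$) even in the most favorable case of a uniform mesh with only interior faces. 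Boundary faces are the binding obstruction: there the average carries no factor $\frac12$, while $\eta_e=\frac{(p_t+1)(p_t+d)}{d}\frac{\mathcal{A}(\partial T)}{\mathcal{V}(T)}$ is not doubled, so elements touching $\partial\Omega_h$ contribute to $A^2$ with full weight.

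The one genuine escape hatch --- that $\nabla v_h$ has degree $p_t-1$, so the trace constant improves by the factor $\rho=\frac{p_t(p_t+d-1)}{(p_t+1)(p_t+d)}$ --- rescues $C_A\le\frac14$ only for low orders (e.g.\ $\rho/2\le\frac14$ holds for $p_t\le 2$ when $d=2$ and only $p_t=1$ when $d=3$, and the boundary-face issue remains), not for all $p_t\ge1$ as the lemma asserts. So the step you flag as the ``main obstacle'' is a real gap, not bookkeeping: keeping ``the squared averaging factor, both sides of the mean, and the exact Shahbazi constant'' --- which, incidentally, is an \emph{arithmetic} mean of $\mathcal{A}(\partial T^\pm)/\mathcal{V}(T^\pm)$, not a geometric one --- still does not deliver $1/2$ by a face-by-face Cauchy--Schwarz/Young argument. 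Closing the SIP case as stated requires either a larger penalty (effectively doubling $\eta_e$, as some references do), or accepting a constant of the form $1-\sqrt{C_A}$, or a genuinely different argument; your proposal, as written, proves coercivity with constant $1$ for $\epsilon=1$ but does not prove the stated constant $\frac12$ for $\epsilon=-1$.
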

Combining this result with a slight variation of the arguments in~\cite[Chap.~4.6.3]{di2011mathematical}, we obtain these results:
\begin{lmm}[Inf-sup stability]\label{lmm:dar_inf_sup}
In the above framework, consider $\epsilon \in \{-1,1\}$ and $p_t\geq 1$. Then,
\tred{
\begin{equation}\sup_{0\neq v_h \in V_h} \dfrac{b_h(w_h,v_h)}{\| v_h \|_{V_h}}  \geq C_{\textrm{sta}} \, \| w_h \|_{V_h}, \quad \forall \, w_h \in V_h. 
\end{equation}}
\end{lmm}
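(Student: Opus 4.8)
The plan is to establish the inf-sup condition by exhibiting, for each $w_h\in V_h$, a test function $v_h$ with $\|v_h\|_{V_h}\lesssim\|w_h\|_{V_h}$ and $b_h(w_h,v_h)\gtrsim\|w_h\|_{V_h}^2$; the quotient then bounds the supremum from below. Following the decomposition $\|\cdot\|_{V_h}^2=(\cdot,\cdot)_{\epsilon}+(\cdot,\cdot)_{\mathrm{up}}$, I split the target norm into a \emph{symmetric part} (whose square root I denote $\|\cdot\|_{s}$) — the $\epsilon$-contribution together with the mass term $\sum_{T}\int_T(\gamma+\beta L^{-1})w_h^2$ and the skeleton term $\tfrac12\sum_{e}\int_e|\Vb\cdot\Vn_e|\,w_h^2$ — and the \emph{streamline part} $\sum_{T}\beta_l\,h_T\int_T(\Vb\cdot\nabla w_h)^2$. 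The diagonal choice $v_h=w_h$ will control the symmetric part, while a second, carefully scaled test function is needed for the streamline part; the final $v_h$ is a combination $w_h+\delta\,v_h^{\ast}$ with $\delta>0$ small, to be fixed at the end.

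For the diagonal estimate I would first invoke Lemma~\ref{lmm:dar_coercivity}, giving $b_h^{\epsilon}(w_h,w_h)\ge c_\epsilon\|w_h\|_{\epsilon}^2$ with $c_\epsilon\in\{\tfrac12,1\}$. For the upwind part I use that $\Vb$ is divergence-free: the discrete analogue of~\eqref{eq:int_by_parts_adv} lets me rewrite the volume term $\sum_{T}\int_T(\Vb\cdot\nabla w_h)\,w_h=\tfrac12\sum_{T}\int_T\Vb\cdot\nabla(w_h^2)$ as skeleton and boundary integrals. These recombine with the interior-face advective jump term, the $\tfrac12|\Vb\cdot\Vn_e|$-penalty, and the boundary $(\Vb\cdot\Vn_e)^\ominus$ term into a nonnegative quadratic form on $\Sk_h$, so that $b_h^{\mathrm{up}}(w_h,w_h)$ reduces to $\sum_{T}\int_T\gamma\,w_h^2+\tfrac12\sum_{e}\int_e|\Vb\cdot\Vn_e|\,(\cdot)^2$. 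Thus $b_h(w_h,w_h)$ already dominates $\|w_h\|_{\epsilon}^2$, the $\gamma$-mass and the skeleton term; the remaining $\beta L^{-1}$-mass contribution (which survives when $\gamma=0$) is recovered in a later step by combining the controlled gradient, jump, and streamline quantities with a Poincaré-type inequality scaled by $L$.

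For the streamline part I would take $v_h^{\ast}$ defined element-wise by $v_h^{\ast}|_T:=\beta_l\,h_T\,\Pi_T(\Vb\cdot\nabla w_h)$, where $\Pi_T$ is the local $L^2$-projection onto $\Pol^{p_t}(T)$ (needed since $\Vb\cdot\nabla w_h$ is not polynomial when $\Vb$ is not piecewise constant). Testing against $v_h^{\ast}$, the volume advection term produces $\sum_{T}\beta_l\,h_T\|\Pi_T(\Vb\cdot\nabla w_h)\|_T^2$, which reproduces the streamline seminorm up to a projection error controlled by the approximation properties of $\Pi_T$ and the regularity of $\Vb$. All remaining contributions of $b_h(w_h,v_h^{\ast})$ — the diffusion volume and flux/jump terms $b_h^{\epsilon}(w_h,v_h^{\ast})$, the reaction term, and the skeleton advective terms — are estimated by Cauchy--Schwarz together with element and trace inverse inequalities ($\|\nabla v_h^{\ast}\|_T\lesssim h_T^{-1}\|v_h^{\ast}\|_T$ and the analogous face bounds), whose powers of $h_T$ are exactly matched by the $h_T$-weight built into $v_h^{\ast}$. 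Each such term is thereby bounded by $\|w_h\|_{s}$ times the streamline seminorm, so Young's inequality lets me absorb them, while the same inverse inequalities give $\|v_h^{\ast}\|_{V_h}\lesssim\|w_h\|_{V_h}$.

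Finally I would combine the two estimates: for $v_h=w_h+\delta\,v_h^{\ast}$, $b_h(w_h,v_h)$ is bounded below by the symmetric part, minus $\delta$ times the absorbed cross terms, plus $\delta$ times the streamline seminorm; choosing $\delta$ small enough (depending on $c_\epsilon$, the inverse-inequality and projection constants, and hence on $p_t$ and the shape-regularity of $\Par_h$) makes the lower bound a positive multiple of $\|w_h\|_{V_h}^2$, and $\|v_h\|_{V_h}\le\|w_h\|_{V_h}+\delta\|v_h^{\ast}\|_{V_h}\lesssim\|w_h\|_{V_h}$, yielding the claim with an explicit $C_{\mathrm{sta}}$. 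I expect the main obstacle to be the absorption of the coupling terms generated by $v_h^{\ast}$ — especially the interior-face flux terms of $b_h^{\epsilon}(w_h,v_h^{\ast})$ — and the uniform, Péclet-robust recovery of the $\beta L^{-1}$-mass term; reconciling these is precisely where the ``slight variation'' of the argument of~\cite[Chap.~4.6.3]{di2011mathematical} enters, and where the tracking of constants is most delicate.
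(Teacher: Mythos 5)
Your proposal is correct and takes essentially the same route as the paper, which gives no written proof of this lemma but instead invokes the coercivity Lemma~\ref{lmm:dar_coercivity} together with ``a slight variation of the arguments in~\cite[Chap.~4.6.3]{di2011mathematical}'' --- precisely the argument you reconstruct: diagonal testing with the divergence-free integration-by-parts identity to reduce $b_h^{\textrm{up}}(w_h,w_h)$ to mass and jump terms, then the weighted streamline test function $v_h = w_h + \delta\,\beta_l h_T \Pi_T(\Vb\cdot\nabla w_h)$ with inverse/trace inequalities and Young absorption for small $\delta$. You also correctly identify the P\'eclet-robust recovery of the $\beta L^{-1}$ mass term as exactly where the ``slight variation'' of the cited argument resides.
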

\begin{lmm}[Boundedness]\label{lmm:dar_boundedness}
Under the same hypotheses of Lemma~\ref{lmm:dar_inf_sup}. \tred{Defining $V_{h,\#} =: U_\# \cup V_h$, it holds:
\begin{equation}
 b_h(w,v_h) \leq C_{\textrm{bnd}} \, \|w\|_{V_h,\#} \| v_h \|_{V_h}, \quad \forall \, (w, v_h) \in V_{h,\#} \times V_{h},
 \end{equation} 
where the norm $\|w\|_{V_h,\#}$ is explicitly defined as:}
\begin{equation}\label{eq:norm_dar_ext}
\|v\|_{V_h,\#}^2:= \|v\|_{V_h}^2 +  \beta \sum_{T \in \Par_h} \int_T v^2 + \sum_{T\in \Par_h} h_T \int_{\partial T} \kappa (\nabla v \cdot \Vn_T)^2. 
\end{equation}
\end{lmm}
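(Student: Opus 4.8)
The plan is to establish the estimate termwise: expand $b_h(w,v_h)=b_h^{\epsilon}(w,v_h)+b_h^{\textrm{up}}(w,v_h)$ and bound each of the eight contributions in~\eqref{eq:sip_bh}--\eqref{eq:up_bh} by a weighted Cauchy--Schwarz inequality, choosing for every term a pair of weights whose first factor is controlled by $\|w\|_{V_h,\#}$ and whose second factor is controlled by $\|v_h\|_{V_h}$. Summing the resulting products and invoking a discrete Cauchy--Schwarz inequality over the elements and edges then yields the claim with $C_{\textrm{bnd}}$ equal to a multiple of the largest per-term constant. The structural point I would keep in mind throughout is the \emph{asymmetry} of the two arguments: $v_h\in V_h$ is a piecewise polynomial, so discrete trace and inverse inequalities (see~\cite{di2011mathematical}) are available for it, whereas $w$ merely lies in $U_\#\subset H^2(\Omega)$, so its boundary fluxes must instead be absorbed by the extra terms that distinguish $\|\cdot\|_{V_h,\#}$ from $\|\cdot\|_{V_h}$ in~\eqref{eq:norm_dar_ext}.

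For the diffusive form $b_h^{\epsilon}$, the volumetric term $\sum_T\int_T\kappa\nabla w\cdot\nabla v_h$ and the penalty term $\sum_e\int_e\kappa\eta_e\llbracket w\rrbracket\llbracket v_h\rrbracket$ are dispatched by a direct Cauchy--Schwarz against the pieces of $(\cdot\,,\cdot)_\epsilon$. The two flux (consistency) terms are the crux. For each I would split the weight as $\eta_e^{-1}\kappa^{-1}$ on the normal flux and $\kappa\eta_e$ on the jump. In the term $-\sum_e\int_e\llbrace\kappa\nabla w\rrbrace\cdot\Vn_e\,\llbracket v_h\rrbracket$ the jump factor is controlled by the penalty part of $\|v_h\|_{V_h}$, while the flux factor, since $\eta_e^{-1}\lesssim h_T/p_t^2$, is dominated by the augmentation term $\sum_T h_T\int_{\partial T}\kappa(\nabla w\cdot\Vn_T)^2$ of $\|w\|_{V_h,\#}$. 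In the term $\epsilon\sum_e\int_e\llbracket w\rrbracket\,\llbrace\kappa\nabla v_h\rrbrace\cdot\Vn_e$ the roles reverse: the jump of $w$ is controlled by the penalty part of $\|w\|_{V_h,\#}$, and the flux of the \emph{polynomial} $v_h$ is handled by a discrete trace inequality $h_T\int_{\partial T}\kappa(\nabla v_h\cdot\Vn_T)^2\lesssim p_t^2\int_T\kappa|\nabla v_h|^2$; here the factor $p_t^2$ from the trace inequality cancels against $\eta_e^{-1}\sim h_T/p_t^2$, leaving a bound by $\|v_h\|_{V_h}^2$ with a constant independent of $h$, $\kappa$ and $p_t$.

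For the advective--reactive form $b_h^{\textrm{up}}$ I would treat the reaction term $\sum_T\int_T\gamma\,w\,v_h$ by Cauchy--Schwarz against the $(\gamma+\beta L^{-1})$-weighted mass term (using $\gamma\le\gamma+\beta L^{-1}$), and the three skeleton terms, namely the inflow term on $\Sk_h^\partial$, the term $\sum_{e\in\Sk_h^0}\int_e(\Vb\cdot\Vn_e)\llbracket w\rrbracket\llbrace v_h\rrbrace$, and the upwind penalty $\tfrac12\sum_{e\in\Sk_h^0}\int_e|\Vb\cdot\Vn_e|\llbracket w\rrbracket\llbracket v_h\rrbracket$, by Cauchy--Schwarz against the $\tfrac12\int_e|\Vb\cdot\Vn_e|(\cdot)^2$ edge terms, using $(\Vb\cdot\Vn_e)^\ominus\le|\Vb\cdot\Vn_e|$ and $|\llbrace v_h\rrbrace|^2\le\tfrac12((v_h^+)^2+(v_h^-)^2)$ to redistribute the averages over the two adjacent elements. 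The remaining advective volume term $\sum_T\int_T(\Vb\cdot\nabla w)\,v_h$ I would bound by Cauchy--Schwarz against the streamline contribution $\sum_T\beta_l h_T\int_T(\Vb\cdot\nabla w)^2$ of $\|w\|_{V_h}$ and the weight $\beta_l^{-1}h_T^{-1}\|v_h\|_T^2$; this last factor is controlled by the mass term of $\|v_h\|_{V_h}$ up to the ratio $L/h_T$ (equivalently, after integration by parts exploiting $\div\Vb=0$, by the added $\beta$-mass term of $\|w\|_{V_h,\#}$ together with the streamline term of $v_h$). Unlike the diffusive fluxes, this step does not produce a mesh-robust constant; since the lemma only asserts the existence of some $C_{\textrm{bnd}}$, this is harmless.

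The main obstacle is therefore the pair of diffusive flux terms: getting the $h$, $\kappa$, $\eta_e$ and $p_t$ scalings to match simultaneously on both sides, and in particular recognizing that the augmentation term $\sum_T h_T\int_{\partial T}\kappa(\nabla w\cdot\Vn_T)^2$ in~\eqref{eq:norm_dar_ext} is present precisely to play the role that the discrete trace inequality plays for the polynomial argument but which is unavailable for the general $H^2$ function $w$. Once these are in hand, collecting all eight bounds, summing the cross-products, and a final Cauchy--Schwarz over the per-term norm factors gives $b_h(w,v_h)\le C_{\textrm{bnd}}\,\|w\|_{V_h,\#}\,\|v_h\|_{V_h}$.
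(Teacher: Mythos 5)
The paper never writes this proof out: it states Lemmas~\ref{lmm:dar_inf_sup}--\ref{lmm:dar_boundedness} as following from ``a slight variation of the arguments in~\cite[Chap.~4.6.3]{di2011mathematical}'', so your attempt must be measured against that standard argument. Your treatment of the diffusive part $b_h^\epsilon$ matches it and is correct, including the crux: the flux of $w$ is absorbed by the augmentation term $\sum_T h_T\int_{\partial T}\kappa(\nabla w\cdot\Vn_T)^2$ in~\eqref{eq:norm_dar_ext} because no discrete trace inequality is available for $w\in U_\#$, while the flux of the polynomial $v_h$ is handled by the discrete trace inequality whose $p_t^2$ cancels against $\eta_e^{-1}\sim h_T/p_t^2$. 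The genuine gaps are in the advective part, and there are two of them.

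First, your closing remark that a constant of order $(L/h_T)^{1/2}$ is ``harmless since the lemma only asserts the existence of some $C_{\textrm{bnd}}$'' misreads the statement: $C_{\textrm{bnd}}$ must be \emph{mesh-independent}. Assumption~\ref{as:bound_load} says so explicitly, Lemma~\ref{lmm:dar_boundedness} is its instantiation, and $C_{\textrm{bnd}}$ enters the quasi-optimality constants in~\eqref{eq:apriori_dg_load}, Proposition~\ref{prop:dar_apriori} and Theorem~\ref{th:FEMwdG}, all of which become vacuous if $C_{\textrm{bnd}}\rightarrow\infty$ as $h\rightarrow 0^+$. So your primary pairing for $\sum_T\int_T(\Vb\cdot\nabla w)\,v_h$ (streamline weight on $w$ against $\beta_l^{-1}h_T^{-1}\|v_h\|_T^2$) does not prove the lemma, and the parenthetical alternative fails for the same reason: after integration by parts the factor $\beta^{-1}\|\Vb\cdot\nabla v_h\|_T^2$ must be compared with the streamline term $\beta_l h_T\|\Vb\cdot\nabla v_h\|_T^2$ of~\eqref{eq:vh_norm}, again leaving an uncontrolled $h_T^{-1}$. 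Second, your bound of $\sum_{e\in\Sk_h^0}\int_e(\Vb\cdot\Vn_e)\llbracket w\rrbracket\{v_h\}$ via $|\{v_h\}|^2\le\tfrac12((v_h^+)^2+(v_h^-)^2)$ presumes the norm controls the individual traces of $v_h$ on interior faces, but the upwind face term of~\eqref{eq:vh_norm} controls only the jump there (see the paper's localization, where $S_e(w,v)=\int_e(\kappa\eta_e+\tfrac12|\Vb\cdot\Vn_e|)\llbracket w\rrbracket\llbracket v\rrbracket$); recovering traces by an inverse trace inequality reinstates an $h_T^{-1/2}$, i.e.\ the same non-uniformity. The standard repair --- the route the paper's citation points to --- is to integrate the advective volume term by parts elementwise using $\div\,\Vb=0$ \emph{before} estimating: the $\llbracket w\rrbracket\{v_h\}$ contributions then cancel identically against the face terms of~\eqref{eq:up_bh}, leaving $-\sum_T\int_T w\,(\Vb\cdot\nabla v_h)$, interior terms $\int_e(\Vb\cdot\Vn_e)\{w\}\llbracket v_h\rrbracket$ with the jump now on the discrete side, and an outflow boundary term; this is precisely where the extra term $\beta\sum_T\int_T w^2$ of~\eqref{eq:norm_dar_ext} is consumed, which your sketch leaves essentially idle. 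Even then, closing the remaining factors with a fully mesh-robust constant requires either mildly data-dependent constants (e.g.\ routing $\|\Vb\cdot\nabla v_h\|_T$ through $\kappa^{1/2}\|\nabla v_h\|_T$) or the additional $\beta$-weighted trace/volume terms that~\cite{di2011mathematical} place in their $\sharp$-norm; your proposal does not confront this step, which is the actual obstacle of the lemma --- not the diffusive fluxes, which you handled correctly.
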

\tred{The following result is a consequence of Lemmas~\ref{lmm:dar_consistency},~\ref{lmm:dar_inf_sup} and~\ref{lmm:dar_boundedness}, and the polynomial approximation property of the discrete space $V_h$ (cf.~\cite{di2008discontinuous}):
\begin{prpstn}[A priori error estimate]\label{prop:dar_apriori}
 The solution $\udg \in V_h$ of problem~\eqref{eq:dg_load} is unique and the following a priori error estimate holds:
\begin{equation}
\left\|u-\udg \right\|_{V_h} \leq \big(1 + \dfrac{C_{\textrm{bnd}}}{C_{\textrm{sta}}}\big) \, \inf_{v_h \in V_h} \| u - v_h\|_{V_h,\#}. 
\end{equation}
For $u \in H^{1+p_t}(\Omega)$, there exists a mesh independent constant $0 < C$, such that 
\begin{equation}\label{eq:dar_slope}
\left\|u-\udg \right\|_{V_h} \leq C \, \|u\|_{H^{1+p_t}(\Omega)} \big( \kappa^{1/2} + \beta^{1/2}h^{1/2} + (\gamma + \beta L^{-1})^{1/2} h \big)  \, h^{p_t}, \text{ when } h \rightarrow 0^+.
\end{equation}
\end{prpstn}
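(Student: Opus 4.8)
The plan is to prove this as a standard Strang/Céa-type result that combines the three lemmas just stated. First I would settle existence and uniqueness: since $\dim V_h < \infty$ and~\eqref{eq:dg_load} is a square linear system, the inf-sup bound of Lemma~\ref{lmm:dar_inf_sup} forces the operator $w_h \mapsto b_h(w_h,\cdot)$ to be injective, hence bijective, so $\udg$ exists and is unique.

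For the quasi-optimality bound, fix an arbitrary $v_h \in V_h$ and split $u - \udg = (u - v_h) + (v_h - \udg)$ via the triangle inequality. The continuous contribution is controlled for free because $\|\cdot\|_{V_h} \le \|\cdot\|_{V_h,\#}$ (the extended norm only adds nonnegative terms), giving $\|u - v_h\|_{V_h} \le \|u - v_h\|_{V_h,\#}$. For the discrete contribution $v_h - \udg \in V_h$, I would apply Lemma~\ref{lmm:dar_inf_sup}, namely $C_{\textrm{sta}}\|v_h - \udg\|_{V_h} \le \sup_{0\neq w_h} b_h(v_h - \udg, w_h)/\|w_h\|_{V_h}$. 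Then I use the discrete equation $b_h(\udg, w_h) = l_h(w_h)$ together with the consistency identity of Lemma~\ref{lmm:dar_consistency}, $l_h(w_h) = b_h(u, w_h)$ (valid since $u \in U_\#$), to rewrite the numerator as $b_h(v_h - \udg, w_h) = b_h(v_h - u, w_h)$. Boundedness (Lemma~\ref{lmm:dar_boundedness}), applied to the pair $(u - v_h, w_h) \in V_{h,\#}\times V_h$, bounds this by $C_{\textrm{bnd}}\|u - v_h\|_{V_h,\#}\|w_h\|_{V_h}$, so that $\|v_h - \udg\|_{V_h} \le (C_{\textrm{bnd}}/C_{\textrm{sta}})\|u - v_h\|_{V_h,\#}$. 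Adding the two contributions and taking the infimum over $v_h \in V_h$ yields the first inequality with constant $1 + C_{\textrm{bnd}}/C_{\textrm{sta}}$.

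For the convergence rate, the plan is to choose $v_h$ as a quasi-interpolant (e.g. the $L^2$-projection or a Scott--Zhang-type operator) of $u \in H^{1+p_t}(\Omega)$ into $\Pol^{p_t}(\Omega_h)$ and to bound each term of $\|u - v_h\|_{V_h,\#}^2$ using standard local polynomial-approximation estimates and scaled discrete trace inequalities on shape-regular meshes. Sorting the terms by their coefficient then produces three groups. The diffusion volume term $\kappa\|\nabla(\cdot)\|_T^2$, the jump-penalty term (using $\eta_e \sim h_T^{-1}$ together with a trace estimate), and the extra normal-flux term $h_T\int_{\partial T}\kappa(\nabla(\cdot)\cdot\Vn_T)^2$ are all $O(\kappa\, h^{2p_t})$. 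The streamline term $\beta_l h_T\|\Vb\cdot\nabla(\cdot)\|_T^2$, the advective face term $\tfrac12|\Vb\cdot\Vn_e|$, and the extra term $\beta\|\cdot\|_T^2$ are $O(\beta\, h^{2p_t+1})$ (the last one being $O(\beta h^{2p_t+2})$, hence subdominant as $h\to0^+$). Finally the reaction term $(\gamma + \beta L^{-1})\|\cdot\|_T^2$ is $O((\gamma + \beta L^{-1})\, h^{2p_t+2})$. Taking the square root of the sum, factoring out $h^{p_t}\|u\|_{H^{1+p_t}(\Omega)}$, and absorbing $1 + C_{\textrm{bnd}}/C_{\textrm{sta}}$ into $C$ gives~\eqref{eq:dar_slope}.

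The hard part will be the rate estimate rather than the Céa argument: one must verify that all approximation constants are mesh-independent, which requires shape regularity to justify both the $\eta_e \sim h_T^{-1}$ scaling and the scaled trace inequalities, and one must bookkeep the $\kappa$, $\beta$, and $\gamma + \beta L^{-1}$ dependence carefully so that the three distinct powers of $h$ emerge cleanly. A secondary subtlety is that boundedness is formulated on $V_{h,\#}$, so I would check that $u - v_h$ legitimately lies in this space, i.e. that the added volume and normal-flux terms of~\eqref{eq:norm_dar_ext} are finite for the difference of $u \in U_\#$ and a polynomial; this holds because $u \in H^2(\Omega)$ makes all those terms well defined.
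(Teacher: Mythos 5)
Your proposal is correct and takes essentially the same route as the paper: the paper presents Proposition~\ref{prop:dar_apriori} without a detailed proof, deriving it as a direct consequence of Lemmas~\ref{lmm:dar_consistency}, \ref{lmm:dar_inf_sup} and~\ref{lmm:dar_boundedness} together with the polynomial approximation property of $V_h$ (citing~\cite{di2008discontinuous}), which is precisely the Strang/C\'ea argument plus interpolation estimates that you spell out. Your term-by-term bookkeeping of the $\kappa$, $\beta$, and $\gamma+\beta L^{-1}$ contributions (including noting that $\beta\|\cdot\|_T^2$ is subdominant) correctly reproduces the three powers of $h$ in~\eqref{eq:dar_slope}.
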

From~\eqref{eq:dar_slope} we infere that  the optimal convergence rate is $h^{p_t + 1}$,  $h^{p_t + 1/2}$, and $h^{p_t }$ for the reaction-dominated,  advection-dominated, and diffusion-dominated regime,  respectively.} 
%
%
%
%
\section{Primal saddle-point problem (AS-FEM)}\label{sec:mixed_primal}
\tpur{We now introduce a conforming automatically adaptive stabilized finite element method (AS-FEM) (see~\cite{ rojas2019adaptive}).} \tred{Let $U, V$ be real Banach (typically Hilbert) spaces --with $V$ being reflexive--, $U_h$ be a conforming subspace of $U$ (e.g., space of globally continuous piecewise polynomial functions of degree $p \geq 1$),  and $V_h$ be a discrete space containing $U_h$ but not necessarily being conforming to either $U$ or $V$ (e.g., space of discontinuous piecewise polynomial functions of degree $p_t = p + \Delta_p$,  with $\Delta_p\geq0$). }\tpur{ We seek a discrete approximation $u_h \in U_h$ of the solution $u \in U$ of a well-posed variational formulation of the form~\eqref{eq:cont_dar}. }
\tpur{The AS-FEM method builds on a stable dG formulation of problem~\eqref{eq:cont_dar} set in the discrete space $V_h$; we obtain $u_h \in U_h$ by minimizing a residual in the dual space of $V_h$. In the following, we describe the main aspects of this method.}
\subsection{Discontinuous Galerkin assumptions}
\tred{We assume that the discrete formulation of~\eqref{eq:cont_dar} satisfies standard properties of a large class of dG formulations; namely,  we use consistent dG formulations and norms with optimal or quasi-optimal convergence properties (cf.~\cite{ di2011mathematical}),  which produce a well-posed discrete formulations for~\eqref{eq:cont_dar} of the form~\eqref{eq:dg_load},  and satisfy the following four assumptions (cf.~Lemmas~\ref{lmm:dar_consistency}-\ref{lmm:dar_boundedness} in Section~\ref{sec:model_dg}).}
\begin{ssmptn}[Strong consistency with regularity] \label{as:reg-consi_load} The exact solution $u$ of problem~\eqref{eq:cont_dar} belongs to a subspace $U_{\#} \subset U$ such that the discrete bilinear form $b_h$ supports evaluations in the extended space $V_{h,\#} \times V_{h}$, with $V_{h,\#} : = U_\# + V_h$, and the following holds true:
  \begin{equation}\label{eq:consistency_load} 
    b_h(u,v_h) = l_h (v_h), \quad \forall \, v_h \in V_h.
  \end{equation}
\end{ssmptn}
\begin{ssmptn}[Inf-sup stability]\label{as:inf-sup} The space $V_h$ can be equipped with a norm \tred{$\|\cdot\|_{V_h}=\sqrt{(\cdot,\cdot)_{V_h}}$}, and there exists a mesh independent constant $C_{\textrm{sta}}>0$, such that:
  \begin{equation}\label{eq:infsup_h}
    \sup_{0\neq v_h \in V_h} \dfrac{b_h(w_h,v_h)}{\| v_h \|_{V_h}}  \geq C_{\textrm{sta}} \, \| w_h \|_{V_h}, \quad \forall \, w_h \in V_h. 
  \end{equation}
\end{ssmptn}
\begin{ssmptn}[Boundedness]\label{as:bound_load} \tred{The norm $\|\cdot\|_{V_h}$ of Assumption~\ref{as:inf-sup}} can extend to the space $V_{h,\#}$ defined in Assumption~\ref{as:reg-consi_load}. Moreover, there is a second norm $\|\cdot\|_{V_h,\#}$ on $V_{h,\#}$ satisfying the following two properties:
 \begin{enumerate}[(i)]
 \item $\|v\|_{V_h} \leq \|v\|_{V_h,\#}$, for all $v \in V_{h,\#}$,
 \item there exists a mesh independent constant $C_{\textrm{bnd}}<\infty$  such that:
    \begin{equation}\label{eq:continuity_load}
      b_h(w,v_h) \leq C_{\textrm{bnd}} \, \|w\|_{V_h,\#} \| v_h \|_{V_h}, \quad \forall \, (w, v_h) \in V_{h,\#} \times V_{h}.
    \end{equation}
  \end{enumerate}
\end{ssmptn}
These assumptions are standard for consistent dG formulations. Assumption~\ref{as:inf-sup} is sufficient to guarantee well-posedness for the discrete problem~\eqref{eq:dg_load}, while assumptions~\ref{as:reg-consi_load} and~\ref{as:bound_load} guarantee the following a priori error estimate (see~\cite{ di2011mathematical}):
\begin{equation}\label{eq:apriori_dg_load}
\inf_{v_h \in V_h}\left\|u-v_h \right\|_{V_{h}} \leq  \left\|u-\udg \right\|_{V_h} \leq \Big(1 + \dfrac{C_{\textrm{bnd}}}{C_{\textrm{sta}}}\Big) \inf_{v_h \in V_h}\left\|u-v_h \right\|_{V_{h,\#}}\,\,.
\end{equation}
\tred{Additionally,  we require that the error estimate in~\eqref{eq:apriori_dg_load} is at least quasi-optimal,  i.e. (cf.~\cite{ di2011mathematical}): 
\begin{ssmptn}[Optimality and quasi-optimality]\label{as:optimality}
For a sufficiently smooth analytical solution $u \in U$, the quantities $\displaystyle \inf_{v_h \in V_h}\left\|u-v_h \right\|_{V_{h}}$ and $\displaystyle  \inf_{v_h \in V_h}\left\|u-v_h \right\|_{V_{h,\#}}$ decay with the same convergence rate as $h \rightarrow 0^+$.
\end{ssmptn}
}\tpur{If assumption~\ref{as:optimality} is satisfied, and the norms $\| \cdot \|_{V_{h,\#}}$ and $\|\cdot \|_{V_{h}}$ are equal, the error estimate~\eqref{eq:apriori_dg_load} is optimal; otherwise, the error estimate is quasi-optimal. This assumption is satisfied for both model formulations in Section~\ref{sec:model_dg}, and the estimate is optimal when the velocity field ${\bf b}$ vanishes in $\Omega$ (cf.~\cite{ di2011mathematical}).}

\subsection{Residual minimization problem}
\tred{Rather than solving the discrete problem~\eqref{eq:dg_load}, we solve:}
\begin{equation}\label{eq:armin_direct}
  \left\{
    \begin{array}{l}
      \text{Find } u_h \in U_h \subset V_h,  \text{ such that:} \\
      \displaystyle u_h = \argmin_{w_h \in U_h} \dfrac{1}{2}\| l_h(\cdot) - b_h(w_h, \cdot) \|_{V_h^\ast} ^2 = \argmin_{w_h \in U_h} \dfrac{1}{2}\| R^{-1}_{V_h} B_h(\udg-w_h) \|_{V_h} ^2.
    \end{array}
  \right.
\end{equation}
In the above, 
$B_h$ corresponds to the operator:
\begin{equation}\label{eq:B_h} 
 \left\{ \begin{array}{rcl}
    B_h&:& V_h \mapsto V_h^\ast \\
        && w_h \mapsto b_h(w_h, \cdot),
  \end{array}
  \right.
\end{equation}
$R^{-1}_{V_h}$ denotes the inverse of the Riesz map:
\begin{equation}\label{eq:riesz}
  \left\{\begin{array}{rcl}
    R_{V_h} &:& V_h \mapsto V_h^\ast \smallskip\\
            && \left< R_{V_h} y_h, v_h\right>_{V_h^\ast \times V_h} := (y_h,v_h)_{V_h},
  \end{array} \right.
\end{equation}
and the dual norm $\|\cdot\|_{V_h^\ast}$ is defined as:
\begin{equation}\label{eq:adjoint_norm}
  \| \phi \|_{V_h^\ast} := \sup_{0 \neq v_h \in V_h} \dfrac{\left< \phi \, , \, v_h \right>_{V_h^\ast \times V_h}}{\| v_h\|_{V_h}}, \quad \forall \, \phi \in V_h^\ast,
\end{equation}
\tred{where $\| \cdot\|_{V_h}$ is the norm in which the dG bilinear form $b_h(\cdot,\cdot)$ is inf-sup stable. The second equality in~\eqref{eq:armin_direct} holds since the Riesz operator is an isometric isomorphism 
%
%
and the dG solution of the discrete problem~\eqref{eq:dg_load} satisfies $l_h(\cdot) = B_h \udg$ in $V_h^\ast$. Problem~\eqref{eq:armin_direct} is equivalent to the following saddle-point problem (see~\cite{ CohDahWelM2AN2012}):}
\begin{equation}\label{eq:mixed_direct}
  \left\{ 
    \begin{array}{l}
      \text{Find } (\varepsilon_h, u_h) \in V_h \times U_h, \text{ such that:}\\ 
      \begin{array}{lll}
        (\varepsilon_h \, , \, v_h)_{V_h} + b_h(u_h \, , \, v_h) \hspace{-0.2cm} & = \  l_h(v_h),   &\quad \forall\, v_h \in V_h, \\
        b_h(w_h \, , \, \varepsilon_h) & = \  0,  &\quad \forall\, w_h \in U_h,
      \end{array}
    \end{array}
  \right.
\end{equation} 
where $\varepsilon_h \in V_h$ denotes a residual representative in terms of $\udg \in V_h$ and $u_h \in U_h$. Indeed, using the Riesz representation, the first condition in~\eqref{eq:mixed_direct} is equivalent to:
\begin{equation}\label{eq:eh}
  \varepsilon_h = R_{V_h}^{-1}(l_h(\cdot) - b_h(u_h, \cdot)) = R_{V_h}^{-1}B_h(\udg -  u_h).
\end{equation}
\tpur{Solving the saddle-point problem~\eqref{eq:mixed_direct} has several benefits. First, the problem inherits the discrete stability of the dG formulation. Second, the approximation belongs to a conforming subspace of $U$; thus, no further postprocessing is necessary as in the case of dG and dPG formulations to obtain a conforming representation. Third, the residual representative $\varepsilon_h$ is an efficient and reliable (robust) error estimate. We now formalize these claims.}

\subsubsection{A priori error estimates and approximation capacity }
\tred{The main result in~\cite{ rojas2019adaptive} is the following theorem for the saddle point problem~\eqref{eq:mixed_direct}:
\begin{thrm}[A priori bounds and error estimates]\label{th:FEMwdG} If assumptions~\ref{as:reg-consi_load}-\ref{as:bound_load} are satisfied, then
the mixed problem~\eqref{eq:mixed_direct} has a unique solution $(\varepsilon_h,u_h)\in V_h\times U_h$. Moreover, such solution satisfies the following a priori bounds:
\begin{equation}\label{eq:bounds}
  \|\varepsilon_h\|\leq \|l_h\|_{V_h^\ast} \qquad \hbox{ and }\qquad \|u_h\|_{V_h} \leq \dfrac{1}{C_{\textrm{sta}}}\|l_h\|_{V_h^\ast}\,\,,
\end{equation}
and the following a priori error estimate holds true:
\begin{equation}\label{eq:apriori_load}
  \|u-u_h\|_{V_h} \leq \Big(1 + \dfrac{C_{\textrm{bnd}}}{C_{\textrm{sta}}}\Big) \inf_{w_h \in U_h}\|u-w_h\|_{V_{h,\#}}\,\,,
\end{equation}
where $u$ denotes the exact solution to the continuous primal problem~\eqref{eq:cont_dar}.
\end{thrm}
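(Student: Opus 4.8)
The plan is to treat the mixed problem~\eqref{eq:mixed_direct} as a symmetric saddle-point system and apply the Babu\v{s}ka--Brezzi theory for well-posedness, then read off the a priori bounds from an orthogonal (Pythagorean) decomposition, and finally obtain the error estimate from a C\'ea-type quasi-optimality argument driven by Assumptions~\ref{as:reg-consi_load}--\ref{as:bound_load}.

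\emph{Well-posedness.} First I would recast~\eqref{eq:mixed_direct} in canonical saddle-point form, with symmetric top-left block given by the inner product $(\cdot,\cdot)_{V_h}$ (the Riesz map $R_{V_h}$) and off-diagonal block $b_h(\cdot,\cdot)$ coupling $U_h$ and $V_h$. The top-left block is coercive on all of $V_h$ with constant $1$, since $(v_h,v_h)_{V_h}=\|v_h\|_{V_h}^2$; in particular it is coercive on the kernel $\{v_h\in V_h : b_h(w_h,v_h)=0\ \forall w_h\in U_h\}$. The inf-sup condition for the off-diagonal block is precisely Assumption~\ref{as:inf-sup} restricted to $U_h\subset V_h$. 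The two Brezzi conditions hence hold, yielding existence and uniqueness of $(\varepsilon_h,u_h)$; equivalently, this follows from the strict convexity of the quadratic functional in~\eqref{eq:armin_direct}.

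\emph{A priori bounds.} The two equations in~\eqref{eq:mixed_direct} encode an orthogonal splitting. Testing the first equation with $v_h=\varepsilon_h$ and using the second with $w_h=u_h$ (so that $b_h(u_h,\varepsilon_h)=0$) gives $\|\varepsilon_h\|_{V_h}^2=l_h(\varepsilon_h)\le\|l_h\|_{V_h^\ast}\|\varepsilon_h\|_{V_h}$, which is the first bound. For the second, I would introduce the Riesz representative $r:=R_{V_h}^{-1}l_h$, for which $\|r\|_{V_h}=\|l_h\|_{V_h^\ast}$, and rewrite~\eqref{eq:eh} as $r=\varepsilon_h+R_{V_h}^{-1}B_h u_h$. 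The second equation states exactly that $\varepsilon_h$ is $(\cdot,\cdot)_{V_h}$-orthogonal to $R_{V_h}^{-1}B_h(U_h)$, so the two summands are orthogonal and Pythagoras gives $\|R_{V_h}^{-1}B_h u_h\|_{V_h}\le\|r\|_{V_h}$. Since $\|R_{V_h}^{-1}B_h u_h\|_{V_h}=\|b_h(u_h,\cdot)\|_{V_h^\ast}\ge C_{\textrm{sta}}\|u_h\|_{V_h}$ by Assumption~\ref{as:inf-sup}, the second bound follows.

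\emph{Error estimate.} Fixing an arbitrary $w_h\in U_h$, I split $\|u-u_h\|_{V_h}\le\|u-w_h\|_{V_h}+\|w_h-u_h\|_{V_h}$ and bound the first term by $\|u-w_h\|_{V_h,\#}$ via property (i) of Assumption~\ref{as:bound_load}. Since $w_h-u_h\in U_h\subset V_h$, I apply~\eqref{eq:infsup_h} to the second term and then, using the first equation of~\eqref{eq:mixed_direct} together with consistency (Assumption~\ref{as:reg-consi_load}, $l_h(v_h)=b_h(u,v_h)$), rewrite $b_h(w_h-u_h,v_h)=b_h(w_h-u,v_h)+(\varepsilon_h,v_h)_{V_h}$. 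Boundedness (Assumption~\ref{as:bound_load}(ii)) controls the first contribution by $C_{\textrm{bnd}}\|u-w_h\|_{V_h,\#}$, and the residual contribution by $\|\varepsilon_h\|_{V_h}$, which in turn satisfies $\|\varepsilon_h\|_{V_h}\le C_{\textrm{bnd}}\|u-w_h\|_{V_h,\#}$ upon testing the identity $\|\varepsilon_h\|_{V_h}^2=b_h(u-w_h,\varepsilon_h)$ (from consistency and the second equation) with boundedness. Passing to the infimum over $w_h\in U_h$ then delivers the quasi-optimal estimate~\eqref{eq:apriori_load}, whose constant is assembled from $C_{\textrm{sta}}$ and $C_{\textrm{bnd}}$. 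The main obstacle is precisely the norm mismatch: because $u\in V_{h,\#}\setminus V_h$, the inf-sup bound---available only on $V_h$---cannot be applied to $u-u_h$ directly, forcing the passage through the discrete difference $w_h-u_h\in V_h$ and the absorption of the residual representative $\varepsilon_h$, which is exactly where consistency and the stronger $\#$-norm of the boundedness estimate must act in concert.
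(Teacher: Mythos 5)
Your well-posedness argument (Brezzi conditions with the top-left block coercive on all of $V_h$ and the inf-sup of Assumption~\ref{as:inf-sup} restricted to $U_h$) and both a priori bounds are correct; in particular, the orthogonal splitting $R_{V_h}^{-1}l_h=\varepsilon_h+R_{V_h}^{-1}B_hu_h$ with Pythagoras is exactly the mechanism that produces the sharp constants $1$ and $1/C_{\textrm{sta}}$ in~\eqref{eq:bounds}. (Note the paper itself does not reproduce a proof of this theorem; it cites~\cite{rojas2019adaptive}, where the same saddle-point/orthogonality structure is used.) The genuine gap is in your error estimate: after the inf-sup step you bound
\begin{equation*}
C_{\textrm{sta}}\,\|w_h-u_h\|_{V_h}\;\le\; C_{\textrm{bnd}}\,\|u-w_h\|_{V_h,\#}+\|\varepsilon_h\|_{V_h}\;\le\; 2\,C_{\textrm{bnd}}\,\|u-w_h\|_{V_h,\#},
\end{equation*}
because you absorb the residual term through the separate estimate $\|\varepsilon_h\|_{V_h}\le C_{\textrm{bnd}}\|u-w_h\|_{V_h,\#}$. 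Combined with the triangle inequality, this yields $\|u-u_h\|_{V_h}\le\big(1+2C_{\textrm{bnd}}/C_{\textrm{sta}}\big)\inf_{w_h\in U_h}\|u-w_h\|_{V_{h,\#}}$, which is strictly weaker than the claimed estimate~\eqref{eq:apriori_load} with constant $1+C_{\textrm{bnd}}/C_{\textrm{sta}}$. So as written you have proved quasi-optimality, but not the theorem as stated.

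The missing idea is to exploit the orthogonality $b_h(w,\varepsilon_h)=\big(R_{V_h}^{-1}B_h w,\varepsilon_h\big)_{V_h}=0$ for all $w\in U_h$ a \emph{second} time, inside the stability step, instead of only in the bound for $\|u_h\|_{V_h}$. Since $u_h-w_h\in U_h$, set $\phi_h:=R_{V_h}^{-1}B_h(u_h-w_h)$; by~\eqref{eq:eh},
\begin{equation*}
\phi_h=R_{V_h}^{-1}\big(l_h-B_hw_h\big)-\varepsilon_h,
\qquad \big(\phi_h,\varepsilon_h\big)_{V_h}=b_h(u_h-w_h,\varepsilon_h)=0,
\end{equation*}
so that $\|\phi_h\|_{V_h}^2=\big(\phi_h,R_{V_h}^{-1}(l_h-B_hw_h)\big)_{V_h}$ and hence, by consistency (Assumption~\ref{as:reg-consi_load}) and Assumption~\ref{as:bound_load}(ii),
\begin{equation*}
\|\phi_h\|_{V_h}\le\big\|R_{V_h}^{-1}(l_h-B_hw_h)\big\|_{V_h}
=\sup_{0\neq v_h\in V_h}\dfrac{b_h(u-w_h,v_h)}{\|v_h\|_{V_h}}
\le C_{\textrm{bnd}}\,\|u-w_h\|_{V_h,\#}.
\end{equation*}
The inf-sup condition~\eqref{eq:infsup_h} then gives $\|w_h-u_h\|_{V_h}\le C_{\textrm{bnd}}\,C_{\textrm{sta}}^{-1}\|u-w_h\|_{V_h,\#}$ with no extra residual contribution, and your triangle inequality plus Assumption~\ref{as:bound_load}(i) delivers~\eqref{eq:apriori_load} with the stated constant. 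In short: the projection property of $\varepsilon_h$ that you already identified for the a priori bound must also drive the C\'ea-type step; the crude triangle-inequality absorption loses a factor of two in the constant.
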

}
\tred{The a priori bound~\eqref{eq:apriori_load} states that the AS-FEM delivers a solution with the same quality of the dG formulation (i.e., with the same convergence rate) if the chosen subspace $U_h\subset V_h$ has the same approximation capacity of $V_h$. That is, the following inequality should be satisfied:
\begin{equation}\label{eq:approx}
\inf_{w_h \in U_h}\|u-w_h\|_{V_{h,\#}} \leq C_{\textrm{app}} \, \inf_{v_h \in V_h}\left\|u-v_h \right\|_{V_{h,\#}}.
\end{equation}}
\tpur{The above inequality is typically satisfied when $U_h$ is a subspace of $V_h$ consisting of conforming functions in $U$. In the model problem, this corresponds to $p=p_t$. Indeed, this order correspondance is the basis for the solution reconstruction via postprocessing in dG methods (see~\cite{ achdou2003priori, alaoui2004residual, karakashian2003posteriori}).  Even though this is not true in general when $p < p_t$,  an augmented test space could improve the error estimates, while keeping the optimal convergence rates in terms of the trial space,  as we show in the numerical section.}
\subsubsection{Saturation assumptions and a posteriori error estimates}\label{sec:aposteriori_primal}
\tred{The residual representative $\varepsilon_h \in V_h$ is an efficient error estimate of $u-u_h$ in the energy norm; that is (up to a constant) $\|\varepsilon_h\|_{V_h}$ is a lower bound.  However,  for $\|\varepsilon_h\|_{V_h}$ to be (up to a constant) also a reliable error estimate (upper bound),  one of the following assumptions must be satisfied:}
\begin{ssmptn}[Saturation]\label{as:saturation_load}
Let $u_h\in U_h$ be the second component of the pair $(\varepsilon_h,u_h)\in V_h\times U_h$ solving~\eqref{eq:mixed_direct} and let $\udg\in V_h$ be the unique solution to~\eqref{eq:dg_load}. There exists a real number $\delta_s \in [0,1)$, uniform with respect to the mesh size, such that $$\|u-\udg\|_{V_h} \le \delta_s \|u-u_h\|_{V_h}.$$
\end{ssmptn}
\begin{ssmptn}[Weak saturation]\label{as:saturation_load_weak}
Let $u_h\in U_h$ be the second component of the pair $(\varepsilon_h,u_h)\in V_h\times U_h$ solving~\eqref{eq:mixed_direct} and let $\udg\in V_h$ be the unique solution to~\eqref{eq:dg_load}. There exists a real number $\delta_w > 0$, uniform with respect to the mesh size, such that $$\|u-\udg\|_{V_h} \le \delta_w \|\udg-u_h\|_{V_h}.$$
\end{ssmptn}
\tred{These two assumptions are not standard in the dG theory as they involve the discrete solution $u_h$ of the saddle-point problem~\eqref{eq:mixed_direct}. Roughly speaking, Assumption~\ref{as:saturation_load} states that the discrete approximation $\udg$ is closer than $u_h$ to the analytical solution $u$ with respect to the norm in $V_h$. This is a meaningful assumption since $U_h \subset V_h$. However, even if the optimality Assumption~\ref{as:optimality} is satisfied, it does not necessarily holds in the pre-asymptotic regime (see~\cite{ rojas2019adaptive}).  The Assumption~\ref{as:saturation_load_weak} is a weaker than Assumption~\ref{as:saturation_load} since the latter implies Assumption~\ref{as:saturation_load_weak}, while the reciprocal is only granted when $\delta_w < 1/2$. We summarize this result in the following proposition.}
\begin{prpstn}[\tred{Robustness} of the residual representative]\label{prop:efficiency} Under the same hypotheses of Theorem~\ref{th:FEMwdG}, it follows:
  \begin{equation}\label{eq:efficiency}
    C_{\emph{sta}}\,  \|\udg-u_h\|_{V_h} \leq \|\varepsilon_h\|_{V_h} \leq C_{\emph{bnd}} \, \|u-u_h\|_{V_h,\#}.
  \end{equation}
  Additionally, when the solution satisfies either of the saturation Assumptions~\ref{as:saturation_load} or~\ref{as:saturation_load_weak}, then the following a~posteriori error estimate holds:
  \begin{equation} \label{eq:a_posteriori}
    \|u-u_h\|_{V_h} \le \dfrac{C_{\emph{sat}}}{C_{\emph{sta}}} \|\varepsilon_h\|_{V_h},
  \end{equation} 
  with $C_{\emph{sat}} = \dfrac{1}{1-\delta_s}$ if Assumption~\ref{as:saturation_load} is satisfied, and $C_{\emph{sat}}  = \delta_w$ if only the weaker Assumption~\ref{as:saturation_load_weak} is satisfied. \tpur{Finally, if Assumption~\ref{as:optimality} is satisfied, then $\|\udg-u_h\|_{V_h}$ and $\|\varepsilon\|_{V_h}$ have the same convergence rate when $h \rightarrow 0^+$.}
\end{prpstn}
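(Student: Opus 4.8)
The plan is to establish the two-sided estimate~\eqref{eq:efficiency} first, and then extract the a~posteriori bound~\eqref{eq:a_posteriori} and the rate equivalence as consequences. For the lower bound I would start from the identity~\eqref{eq:eh}, $\varepsilon_h = R_{V_h}^{-1}B_h(\udg - u_h)$. Since the Riesz map $R_{V_h}$ is an isometric isomorphism between $V_h$ and $V_h^\ast$, we have $\|\varepsilon_h\|_{V_h} = \|B_h(\udg-u_h)\|_{V_h^\ast}$, and by the definition~\eqref{eq:adjoint_norm} of the dual norm this is $\sup_{0\neq v_h\in V_h} b_h(\udg-u_h,v_h)/\|v_h\|_{V_h}$. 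Because $U_h\subset V_h$, the difference $\udg-u_h$ lies in $V_h$, so applying the inf--sup stability of Assumption~\ref{as:inf-sup} with $w_h=\udg-u_h$ gives immediately $\|\varepsilon_h\|_{V_h}\geq C_{\mathrm{sta}}\,\|\udg-u_h\|_{V_h}$.

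For the upper bound, the key observation is Galerkin orthogonality: by strong consistency (Assumption~\ref{as:reg-consi_load}) we have $b_h(u,v_h)=l_h(v_h)$, while the dG solution satisfies $b_h(\udg,v_h)=l_h(v_h)$, so that $b_h(\udg-u,v_h)=0$ for all $v_h\in V_h$. Substituting this into the supremum above lets me rewrite $b_h(\udg-u_h,v_h)=b_h(u-u_h,v_h)$, where now $u-u_h\in V_{h,\#}$. Boundedness (Assumption~\ref{as:bound_load}) then yields $b_h(u-u_h,v_h)\leq C_{\mathrm{bnd}}\|u-u_h\|_{V_h,\#}\|v_h\|_{V_h}$, and taking the supremum over $v_h$ completes~\eqref{eq:efficiency}.

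With~\eqref{eq:efficiency} in hand, the a~posteriori estimate follows from the triangle inequality $\|u-u_h\|_{V_h}\leq\|u-\udg\|_{V_h}+\|\udg-u_h\|_{V_h}$ combined with a saturation hypothesis. Under Assumption~\ref{as:saturation_load} I would absorb $\|u-\udg\|_{V_h}\leq\delta_s\|u-u_h\|_{V_h}$ into the left-hand side, producing the factor $1/(1-\delta_s)$; under the weaker Assumption~\ref{as:saturation_load_weak} I would instead insert $\|u-\udg\|_{V_h}\leq\delta_w\|\udg-u_h\|_{V_h}$ directly. In either case the surviving term $\|\udg-u_h\|_{V_h}$ is converted into $\|\varepsilon_h\|_{V_h}$ through the lower bound $C_{\mathrm{sta}}\|\udg-u_h\|_{V_h}\leq\|\varepsilon_h\|_{V_h}$ just established, which gives~\eqref{eq:a_posteriori} with the stated constant $C_{\mathrm{sat}}$.

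For the final convergence-rate claim I would argue that under the optimality Assumption~\ref{as:optimality} the dG estimate~\eqref{eq:apriori_dg_load} and the AS-FEM estimate~\eqref{eq:apriori_load} force both $\|u-\udg\|_{V_h}$ and $\|u-u_h\|_{V_h}$ to decay at the same optimal rate, whence a triangle inequality shows $\|\udg-u_h\|_{V_h}$ inherits that rate; the two-sided bound~\eqref{eq:efficiency} then squeezes $\|\varepsilon_h\|_{V_h}$ between $C_{\mathrm{sta}}\|\udg-u_h\|_{V_h}$ and $C_{\mathrm{bnd}}\|u-u_h\|_{V_h,\#}$, so it shares the same rate. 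I expect the conceptual crux to be the Galerkin-orthogonality step in the upper bound, since this is what turns $\varepsilon_h$ from a mere stability quantity into a genuine error estimator. The main technical obstacle is that boundedness and the a~priori estimates are phrased in the stronger norm $\|\cdot\|_{V_h,\#}$, whereas efficiency and the rate statement are measured in $\|\cdot\|_{V_h}$; reconciling the two in the last step requires either the optimality assumption or a discrete norm-equivalence argument valid on $V_h$ alone, because Assumption~\ref{as:bound_load} only supplies the inequality $\|v\|_{V_h}\leq\|v\|_{V_h,\#}$ and not its reverse.
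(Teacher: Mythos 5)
Your argument is correct and is essentially the approach the paper takes: this proposition is stated without proof (it is imported from the AS-FEM framework of the cited reference), but the identical mechanism --- the identity~\eqref{eq:eh} plus the Riesz isometry to rewrite $\|\varepsilon_h\|_{V_h}$ as the dual norm $\sup_{0\neq v_h\in V_h} b_h(\udg-u_h,v_h)/\|v_h\|_{V_h}$, the inf-sup condition~\eqref{eq:infsup_h} for the lower bound, and consistency (Galerkin orthogonality) plus boundedness~\eqref{eq:continuity_load} for the upper bound --- is exactly what the paper executes for the adjoint analogue in the proof of Proposition~\ref{th:err_rep_goal}, and your saturation branches are the same triangle inequalities the paper uses later in Proposition~\ref{prop:GO} and Corollary~\ref{coro:aposteriori}.

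One discrepancy needs flagging. In the weak-saturation branch your (correct) derivation yields $\|u-u_h\|_{V_h}\le(1+\delta_w)\,\|\udg-u_h\|_{V_h}\le \dfrac{1+\delta_w}{C_{\textrm{sta}}}\|\varepsilon_h\|_{V_h}$, i.e.\ $C_{\textrm{sat}}=1+\delta_w$, not the stated $C_{\textrm{sat}}=\delta_w$: the triangle inequality cannot shed the unit contribution of $\|\udg-u_h\|_{V_h}$ itself, and Assumption~\ref{as:saturation_load_weak} alone does not give $\|u-u_h\|_{V_h}\le\delta_w\|\udg-u_h\|_{V_h}$. The stated constant appears to be an erratum (compare the factor $1+\mu_w$ in~\eqref{eq:GO_estimator_weak}, obtained by the same argument), so you should not claim your chain produces ``the stated constant'' verbatim. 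On the final rate claim, your closing caveat is well placed; note that a shorter route is to take $w=\udg-u_h\in V_h\subset V_{h,\#}$ directly in Assumption~\ref{as:bound_load}, giving $C_{\textrm{sta}}\|\udg-u_h\|_{V_h}\le\|\varepsilon_h\|_{V_h}\le C_{\textrm{bnd}}\|\udg-u_h\|_{V_{h,\#}}$, which sandwiches $\|\varepsilon_h\|_{V_h}$ between the two norms of the \emph{same} discrete difference and reduces the statement to the rate-comparability of $\|\cdot\|_{V_h}$ and $\|\cdot\|_{V_{h,\#}}$ --- the role Assumption~\ref{as:optimality} is meant to play; your detour through $u$ works too but additionally invokes the approximation-capacity inequality~\eqref{eq:approx}, since~\eqref{eq:apriori_load} involves the infimum over $U_h$ rather than $V_h$, and it only bounds $\|\udg-u_h\|_{V_h}$ from above (superconvergence of this difference is not excluded, which is precisely why the direct sandwich is the safer formulation).
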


\section{Adjoint formulations}\label{sec:adjoint_problems}

In this section, we introduce the adjoint (dual) problems. Section~\ref{sec:adjoint_dg} discusses the adjoint continuous formulation and its dG formulation. Section~\ref{sec:mixed_goal} introduces the adjoint problem to the saddle-point formulation~\eqref{eq:mixed_direct}.

\subsection{Continuous and dG adjoint formulations}\label{sec:adjoint_dg}

We want to accurately approximate a quantity of interest $q(u)$, where $q:U\rightarrow\R$ is a bounded linear form, and $u \in U$ is the analytical solution of the continuous, primal problem~\eqref{eq:cont_dar}. \tpur{The goal-oriented-adaptive (GoA)} strategy considers a second continuous problem, known as the continuous adjoint problem:
\begin{equation}\label{eq:cont_goal}
  \left\{
    \begin{array}{l}
      \text{Find } v^\ast \in V,  \text{ such that:} \\
      b(w, v^\ast) = q(w), \quad \forall \, w \in U.
    \end{array}
  \right.
\end{equation}
\tpur{Assuming that an extended linear form $q$ can be exactly evaluated for any function in $V_h$, } the dG adjoint formulation associated with ~\eqref{eq:cont_dar} reads:
\begin{equation}\label{eq:dg_goal}
  \left\{
    \begin{array}{l}
      \text{Find } \vdg \in V_h,  \text{ such that:} \\
      b_h(v_h, \vdg) = q(v_h), \quad \forall \, v_h \in V_h,
    \end{array}
  \right.
\end{equation} 
where $b(\cdot,\cdot)$ and $b_h(\cdot,\cdot)$ denote the same bilinear forms \tpur{of} the continuous~\eqref{eq:cont_dar} and discrete~\eqref{eq:dg_load} primal problems, respectively. \tpur{When~\eqref{eq:dg_goal} is consistent, that is, adjoint consistency (cf. Assumption~\ref{as:reg-consi_load}), } the following identity holds:
\begin{equation}\label{eq:dg_goal_identity}
  q(u-\udg) = b_h(u-\udg \, , \, v^\ast) = b_h(u-\udg, v^\ast-\vdg),
\end{equation} 
where the last equality follows from the Galerkin orthogonality.  Standard discontinous Galerkin GoA algorithms employ a posteriori error estimates of the quantity of interest~\eqref{eq:dg_goal_identity}. These estimates provide an upper bound for~\eqref{eq:dg_goal_identity} in terms of locally computable variables that guide the adaptivity and controls the error in the quantity $q(u-\udg)$. The main limitation with the standard dG a~posteriori procedure is that the adjoint consistency is not always satisfied for all consistent dG formulations (cf.~\cite{ hartmann2007adjoint, arnold2002unified}), \tpur{which complexifies the error estimation when the selected formulation is not adjoint consistent. An advantage of our GoA strategy is that the adjoint consistency of the reference dG formulation is unnecessary since we seek to reduce the error $q(\udg-u_h)$ as Section~\ref{sec:goal_estimator} explains, which allows us to explore GoA formulations with fewer assumptions on the reference dG formulation.}
\begin{rmrk}[Well-posedness of the dG problem] 
The well-posedness for the dG adjoint problem~\eqref{eq:dg_goal} is guaranteed since the discrete inf-sup condition~\eqref{eq:infsup_h} is equivalent to (see~\cite{ boffi2013mixed}):
\begin{equation}\label{eq:infsup_h_goal}
  \sup_{0\neq w_h \in V_h} \dfrac{b_h(w_h,v_h)}{\| w_h \|_{V_h}}  \geq C_{\textrm{sta}} \, \| v_h \|_{V_h}, \quad \forall \, v_h \in V_h.
\end{equation}
\end{rmrk}
\subsection{Adjoint saddle-point formulation}
\label{sec:mixed_goal}
\tred{Following~\cite{ keith2019goal}, we consider the following saddle-point problem as the adjoint formulation of problem~\eqref{eq:mixed_direct}:}
\begin{equation}\label{eq:mix_form_goal}
  \left\{ 
    \begin{array}{l}
      \text{Find } (v_h^\ast, w_h^\ast) \in V_h \times U_h, \text{ such that}:\\ 
      \begin{array}{lll}
        (v_h^\ast \, , \, v_h)_{V_h} + b_h(w_h^\ast \, , \, v_h) \hspace{-0.2cm} & = \ 0,   &\quad \forall\, v_h \in V_h, \\
        b_h(w_h \, , \, v_h^\ast) & = \ q(w_h),  &\quad \forall\, w_h \in U_h,
      \end{array}
    \end{array}
  \right.
\end{equation}
\tpur{where $v_h^\ast \in V_h$ approximates the adjoint dG solution $\vdg$ of~\eqref{eq:dg_goal}, which is the adjoint counterpart of the discrete solution $u_h \in U_h$ of~\eqref{eq:mixed_direct}, while $w_h^\ast \in U_h$ is an auxiliary variable that constrains the dimension of the dG solution.} The saddle-point formulation for the primal problem~\eqref{eq:mixed_direct} is equivalent to a residual minimization. Similarly, the adjoint formulation minimizes the Riesz representation subject to constraints. If the direct saddle-point problem~\eqref{eq:mixed_direct} is well-posed, then the mixed-adjoint problem~\eqref{eq:mix_form_goal} is also well-posed, as both discrete problems share the same left-hand side square matrix. \tpur{Problem~\eqref{eq:mix_form_goal} has, at least, two equivalent representations (see discussion in Remarks~\ref{rk:Riesz.min} and~\ref{rk:PG}).}
\begin{rmrk}[Constrained Riesz representative minimization] \label{rk:Riesz.min}
We seek the optimal Riesz representative $v_h^\ast \in V_h$ subject to the constraint of the second equation in~\eqref{eq:mix_form_goal}. To analyze this problem as an unconstrained optimization problem, we introduce a Lagrangian and the respective set of multipliers $w_h \in U_h$. Let $(v_h^\ast, w_h^\ast) \in V_h \times U_h$ be a stationary point of
\begin{align*}
  \mathcal{L}(v_h, w_h) = \dfrac{1}{2} (v_h\, , \,v_h)_{V_h}+ b_h(w_h \, , \, v_h)-q(w_h).
\end{align*}
The stationarity conditions of $\mathcal{L}(v_h, w_h)$ (i.e., $\tfrac{\partial \mathcal{L}(v_h, w_h)}{\partial v_h}=\tfrac{\partial \mathcal{L}(v_h, w_h)}{\partial w_h}=0$) correspond to the mixed adjoint problem~\eqref{eq:mix_form_goal}. Denoting the stationary value of the Lagrangian $\mathcal{L^\ast}(q)=\mathcal{L}(v_h^\ast, w_h^\ast)=\tfrac{1}{2} (v_h^\ast\, , \,v_h^\ast)_{V_h}$ as an implicit function of quantity of interest, then $-w_h^\ast$ determines the (marginal) effect of each constraint on the attainable value of  the Riesz representative.
\end{rmrk}
\begin{rmrk}[Petrov-Galerkin method with optimal trial functions] \label{rk:PG}
The first equation in~\eqref{eq:mix_form_goal} introduces a Lagrange multiplier $w^\ast_h \in U_h$ (see Remark~\ref{rk:Riesz.min}). This allows us to equivalently express~\eqref{eq:mix_form_goal} in terms of the adjoint discrete solution $v_h^\ast \in V_h$ via the following Petrov-Galerkin problem:
\begin{equation}\label{eq:PG_goal}
  \left\{
    \begin{array}{l}
      \text{Find } v_h^\ast \in \Theta_h,  \text{ such that:} \\
      b_h(w_h, v_h^\ast) = q(w_h), \quad \forall w_h \in U_h,
    \end{array}
  \right.
\end{equation}
where the discrete space $\Theta_h \subset V_h$ is defined as: 
\begin{equation}\label{eq:Theta_h}
  \Theta_h := \left\{ \varphi_h \in V_h \, \text{ s.t. } \, \exists \, w_h \in U_h \, : \, (\varphi_h, v_h)_{V_h} + b_h(w_h, v_h) = 0, \forall v_h \in V_h \right\}. 
\end{equation}
This subspace is the Riesz representation of the action of $b_h$ on each basis of $U_h$. Problem~\eqref{eq:mix_form_goal} inherits the well-posedness  from the direct saddle-point problem~\eqref{eq:mixed_direct}, which is a direct consequence of the inf-sup Assumption~\ref{as:inf-sup}. Moreover, the existence of a unique representative $w_h \in U_h$ in the definition~\eqref{eq:Theta_h} is a consequence of the bijectivity of the Riesz isomorphism and the injectivity of the operator $B_h$ (see Equation~\eqref{eq:B_h}), implying that $w_h^\ast \in U_h$ is the unique representative of $v_h^\ast$.
\end{rmrk}

\subsection{Residual based error representative and error estimates for the adjoint problem}
\label{sec:error_goal}

Unlike the saddle-point formulation of the primal problem~\eqref{eq:mixed_direct}, the adjoint saddle-point formulation does not deliver an on-the-fly error estimate, as $w_h^\ast \in U_h$ is a Lagrange multiplier. Nevertheless, we can estimate the error by solving the following discrete problem:
\begin{equation}\label{eq:error_est_goal}
\left\{\begin{array}{l}
\text{Find } \varepsilon_h^\ast \in V_h,  \text{ such that:} \\
(\varepsilon_h^\ast, v_h)_{V_h} = q(v_h)-b_h(v_h,v_h^\ast), \quad \forall \, v_h \in V_h.
\end{array}
\right.
\end{equation}
which estimates the distance of the residual in $V_h$ to its orthogonal projection onto the optimal trial function space.  This problem has a unique solution and is well posed as we discuss below.
\begin{rmrk}
  If $U_h$ is strictly contained in $V_h$ then, in general, $\varepsilon_h^\ast \neq 0$ in $V_h^\ast$. Indeed, the orthogonality constraint $(\varepsilon_h^\ast, v_h)_{V_h} = 0$ holds only if $v_h \in U_h$. Therefore, equation~\eqref{eq:error_est_goal}, together with the second identity in~\eqref{eq:mix_form_goal}, imply that $\varepsilon_h^\ast = R_{V_h}^{-1}B_h^\ast(\vdg-v_h^\ast)$, with $B_h^\ast$ defined as:
\begin{equation}\label{eq:B_h_star} 
\left\{\begin{array}{ll}
B_h^\ast:& V_h \rightarrow V_h^\ast \\
& v_h \rightarrow b_h(\cdot, v_h).
\end{array} \right.
\end{equation}
\end{rmrk}
\tpur{Unlike the residual representative for the primal problem $\varepsilon_h$ (see~\eqref{eq:eh}), the adjoint error estimate $\varepsilon_h^\ast$ (see~\eqref{eq:B_h_star}) may be an inefficient as the adjoint solution $v_h^\ast$  belongs to a subspace $\Theta_h \subset V_h$ (see Remark~\ref{rk:PG}), which does not necessarily have the same approximation capacity than $V_h$ (cf.~\eqref{eq:approx})}. Nevertheless, the following results hold:
\tred{\begin{prpstn}[Upper bound for the discrete adjoint error]\label{prop:min_dual}
Under the same hypothesis of Proposition~\ref{th:err_rep_goal}. For all $c \in \R$, it holds:
\begin{equation}
\left\| \vdg-v_h^\ast \right\|_{V_h} \leq \left\|\vdg- c \, v_h^\ast \right\|_{V_h} \leq \dfrac{1}{C_{\textrm{sta}}} \, \left\|q \right\|_{V_h^\ast}
\end{equation}
\end{prpstn}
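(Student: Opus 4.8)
The plan is to split the claimed chain into its two parts and handle them separately. The first inequality, $\|\vdg - v_h^\ast\|_{V_h} \le \|\vdg - c\,v_h^\ast\|_{V_h}$ valid for every $c\in\R$, is a best-approximation statement: it asserts that among all scalar multiples of $v_h^\ast$ the choice $c=1$ is the closest to $\vdg$ in the $V_h$-inner product. The standard route is therefore to establish the single orthogonality relation $(\vdg - v_h^\ast,\, v_h^\ast)_{V_h}=0$ and then expand the square. The rightmost bound I would then obtain by specializing the (already proven) first inequality to $c=0$, which reduces matters to the a priori estimate $\|\vdg\|_{V_h}\le C_{\textrm{sta}}^{-1}\|q\|_{V_h^\ast}$; this last bound follows directly from inf-sup stability of the adjoint dG problem.

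For the orthogonality, the key observation is that $v_h^\ast$ comes paired with its unique coupling variable $w_h^\ast\in U_h$ satisfying the first equation of~\eqref{eq:mix_form_goal}, i.e. $(v_h^\ast,v_h)_{V_h}=-b_h(w_h^\ast,v_h)$ for all $v_h\in V_h$. I would test this identity twice: once with $v_h=v_h^\ast$ to get $\|v_h^\ast\|_{V_h}^2=-b_h(w_h^\ast,v_h^\ast)$, and once with $v_h=\vdg$ to get $(\vdg,v_h^\ast)_{V_h}=-b_h(w_h^\ast,\vdg)$. The crucial point is that $b_h(w_h^\ast,\cdot)$ evaluates to the \emph{same} number, namely $q(w_h^\ast)$, against both arguments: against $\vdg$ by the definition of the dG adjoint~\eqref{eq:dg_goal} (with $w_h^\ast\in U_h\subset V_h$ an admissible test function), and against $v_h^\ast$ by the constraint equation, the second line of~\eqref{eq:mix_form_goal}, evaluated at $w_h=w_h^\ast$. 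This forces $(\vdg,v_h^\ast)_{V_h}=\|v_h^\ast\|_{V_h}^2$, hence $(\vdg-v_h^\ast,v_h^\ast)_{V_h}=0$.

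With the orthogonality in hand, I would write $\vdg-c\,v_h^\ast=(\vdg-v_h^\ast)+(1-c)\,v_h^\ast$ and expand $\|\vdg-c\,v_h^\ast\|_{V_h}^2$; the cross term vanishes, leaving $\|\vdg-v_h^\ast\|_{V_h}^2+(1-c)^2\|v_h^\ast\|_{V_h}^2\ge\|\vdg-v_h^\ast\|_{V_h}^2$, which is the first inequality for every $c$. Setting $c=0$ gives $\|\vdg-v_h^\ast\|_{V_h}\le\|\vdg\|_{V_h}$. Finally, the bound on $\|\vdg\|_{V_h}$ comes from applying the adjoint inf-sup condition~\eqref{eq:infsup_h_goal} to $\vdg$ and replacing $b_h(w_h,\vdg)$ by $q(w_h)$ via~\eqref{eq:dg_goal}, so that the supremum furnishing the lower bound equals exactly $\|q\|_{V_h^\ast}$; dividing by $C_{\textrm{sta}}$ closes the chain.

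I expect the only genuine obstacle to be the orthogonality step, and specifically the bookkeeping that makes $b_h(w_h^\ast,\vdg)$ and $b_h(w_h^\ast,v_h^\ast)$ collapse to the common value $q(w_h^\ast)$. This hinges on using the correct defining equation for each of $\vdg$ and $v_h^\ast$, and on the inclusion $U_h\subset V_h$ that legitimizes $w_h^\ast$ as a test function in~\eqref{eq:dg_goal}; everything thereafter is a one-line Pythagorean expansion followed by a direct invocation of inf-sup stability.
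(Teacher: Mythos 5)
Your proposal is correct and follows essentially the same route as the paper: the paper establishes $c_{\textrm{min}}=1$ for the quadratic in $c$ using the first equation of~\eqref{eq:mix_form_goal} together with the identity $b_h(w_h^\ast,v_h^\ast)=q(w_h^\ast)=b_h(w_h^\ast,\vdg)$ (the second line of~\eqref{eq:mix_form_goal} plus~\eqref{eq:dg_goal} with $w_h^\ast\in U_h\subset V_h$), which is exactly your orthogonality relation $(\vdg-v_h^\ast,\,v_h^\ast)_{V_h}=0$ in minimizer form, and it likewise obtains the right-hand bound from the adjoint inf-sup condition~\eqref{eq:infsup_h_goal} and the dual-norm definition~\eqref{eq:adjoint_norm}. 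Your explicit choice $c=0$ and Pythagorean expansion merely make the paper's terse argument precise (and avoid dividing by $\|v_h^\ast\|_{V_h}^2$ in the degenerate case $v_h^\ast=0$).
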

\begin{proof}
The quantity $\left\|\vdg- c \, v_h^\ast \right\|_{V_h}$ is minimized for $c_{\textrm{min}} = \dfrac{\big(v_h^\ast\, , \, \vdg\big)_{V_h}}{\big(v_h^\ast\, , \, v_h^\ast \big)_{V_h}}$. Using the first equation in~\eqref{eq:mix_form_goal}, and the fact that $b_h(w_h^\ast\, , v_h^\ast) = b_h(w_h^\ast\, , \vdg)$, we conclude that $c_{\textrm{min}} = 1$. The second inequality is a direct consequence of the adjoint inf-sup condition~\eqref{eq:infsup_h_goal} and the dual norm definition~\eqref{eq:adjoint_norm}. 
\end{proof}}
\begin{prpstn}[\tred{Robustness} of the adjoint residual representative] \label{th:err_rep_goal}
Let $v_h^\ast \in V_h$ be the first component of the solution pair $(v_h^\ast,w_h^\ast)\in V_h\times U_h$ of~\eqref{eq:mix_form_goal}. Let $\vdg \in V_h$ be the unique solution to~\eqref{eq:dg_goal}. Then:
\begin{equation} \label{eq:bnd_uh-thetah_goal}
C_{\textrm{sta}} \, \left\|\vdg-v_h^\ast \right\|_{V_h} \le \left\|\varepsilon_h^\ast \right\|_{V_h} \tred{\le C_{\textrm{bnd}} \, \left\|\vdg-v_h^\ast \right\|_{V_h, \#} }.
\end{equation} 
\end{prpstn}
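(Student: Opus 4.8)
The plan is to recognize that the adjoint residual representative is, up to the Riesz map, the transposed operator $B_h^\ast$ acting on the discrete adjoint error $\vdg-v_h^\ast$, and then to reproduce verbatim the two-sided argument behind the primal estimate~\eqref{eq:efficiency}, with the primal inf-sup replaced by its adjoint counterpart~\eqref{eq:infsup_h_goal}. First I would establish the identity $(\varepsilon_h^\ast,v_h)_{V_h}=b_h(v_h,\vdg-v_h^\ast)$ for all $v_h\in V_h$. This is immediate: substituting the dG adjoint equation~\eqref{eq:dg_goal}, $q(v_h)=b_h(v_h,\vdg)$, into the defining relation~\eqref{eq:error_est_goal} turns its right-hand side into $b_h(v_h,\vdg)-b_h(v_h,v_h^\ast)=b_h(v_h,\vdg-v_h^\ast)$. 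In operator form this is $\varepsilon_h^\ast=R_{V_h}^{-1}B_h^\ast(\vdg-v_h^\ast)$, exactly the expression recorded in the remark preceding~\eqref{eq:B_h_star}; crucially, it needs no adjoint-consistency hypothesis because both $\vdg$ and $v_h^\ast$ lie in $V_h$.

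Next, since the Riesz map is an isometric isomorphism, I would pass to the dual norm and use definition~\eqref{eq:adjoint_norm} to write $\|\varepsilon_h^\ast\|_{V_h}=\|B_h^\ast(\vdg-v_h^\ast)\|_{V_h^\ast}=\sup_{0\neq v_h\in V_h} b_h(v_h,\vdg-v_h^\ast)/\|v_h\|_{V_h}$. The lower bound then follows at once: applying the adjoint inf-sup condition~\eqref{eq:infsup_h_goal} with the fixed second argument $\vdg-v_h^\ast\in V_h$ shows that this supremum is at least $C_{\textrm{sta}}\|\vdg-v_h^\ast\|_{V_h}$, which is precisely the claimed left inequality $C_{\textrm{sta}}\|\vdg-v_h^\ast\|_{V_h}\le\|\varepsilon_h^\ast\|_{V_h}$.

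For the right inequality I would invoke boundedness. The subtlety is that Assumption~\ref{as:bound_load} (equivalently Lemma~\ref{lmm:dar_boundedness}) places the $\|\cdot\|_{V_h,\#}$ norm on the \emph{first} argument of $b_h$ through~\eqref{eq:continuity_load}, whereas here the error $\vdg-v_h^\ast$ sits in the \emph{second} slot. I therefore use the transposed boundedness estimate $b_h(v_h,\phi)\le C_{\textrm{bnd}}\|v_h\|_{V_h}\|\phi\|_{V_h,\#}$ for $(v_h,\phi)\in V_h\times V_{h,\#}$; taking $\phi=\vdg-v_h^\ast$, dividing by $\|v_h\|_{V_h}$ and passing to the supremum yields $\|\varepsilon_h^\ast\|_{V_h}\le C_{\textrm{bnd}}\|\vdg-v_h^\ast\|_{V_h,\#}$, closing the chain.

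The hard part will be justifying this transposed boundedness. For the present model problem it is routine: the only contributions in~\eqref{eq:sip_bh}--\eqref{eq:up_bh} not already controlled by the $V_h$-norm alone are the consistency-flux face terms and the advective volume term, and these differentiate the two arguments in a symmetric fashion, so the boundedness inequality holds with the enriched $\#$-norm placed on either slot, with the same constant $C_{\textrm{bnd}}$. In the abstract framework one should simply postulate this transposed estimate alongside Assumption~\ref{as:bound_load}; with it in hand the entire proof is a transcription of the primal robustness argument of Proposition~\ref{prop:efficiency}, with $B_h$ replaced by $B_h^\ast$ and~\eqref{eq:infsup_h} replaced by~\eqref{eq:infsup_h_goal}.
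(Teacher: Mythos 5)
Your argument is correct and is essentially the paper's own proof: you derive $\varepsilon_h^\ast = R_{V_h}^{-1}B_h^\ast(\vdg - v_h^\ast)$ from~\eqref{eq:error_est_goal} and~\eqref{eq:dg_goal}, pass to the dual norm via the Riesz isometry, and obtain the lower bound from the adjoint inf-sup condition~\eqref{eq:infsup_h_goal} and the upper bound from boundedness, exactly as in the paper. Your explicit flagging of the transposed boundedness estimate $b_h(v_h,\phi)\le C_{\textrm{bnd}}\,\|v_h\|_{V_h}\,\|\phi\|_{V_h,\#}$ is warranted extra care rather than a deviation --- the paper's one-line appeal to Assumption~\ref{as:bound_load} tacitly uses the same transposition, since the error $\vdg-v_h^\ast$ occupies the second slot of $b_h$ --- though note that for the advective term $\int_T(\Vb\cdot\nabla w_h)\,v_h$ the ``symmetric fashion'' you invoke really rests on integration by parts with $\div\,\Vb=0$ (plus face terms), so the transposed bound need not hold with literally the same constant $C_{\textrm{bnd}}$.
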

\begin{proof}
We have
\begin{alignat*}{2}
\tag{by~\eqref{eq:error_est_goal} and~\eqref{eq:dg_goal}} \left\|\varepsilon_h^\ast \right\|_{V_h} = & \left\|R^{-1}_{V_h}B_h^\ast \big(\vdg - v_h^\ast\big) \right\|_{V_h^\ast}\\
 = & \sup_{0 \neq v_h \in V_h} \dfrac{b_h(v_h,\vdg - v_h^\ast)}{\|v_h\|_{V_h}}\\
 \tag{by~\eqref{eq:infsup_h_goal}} \geq &  C_{\textrm{sta}} \left\|\vdg - v_h^\ast \right\|_{V_h},
\end{alignat*}
proving \tred{the first inequality in}~\eqref{eq:bnd_uh-thetah_goal}; \tred{the second inequality follows from~\eqref{eq:error_est_goal} and the boundedness Assumption~\ref{as:bound_load}.}
\end{proof}

\section{Goal oriented a posteriori error estimation}
\label{sec:goal_estimator}
\tpur{Here, we describe the localized error estimates that guide our GoA. We estimate them in terms of dG solutions associated with~\eqref{eq:dg_load} and~\eqref{eq:dg_goal}.  Therefore, they rely on the discrete stability of these problems.  First, we introduce a proposition that encapsulates useful relations in terms of the involved discrete terms; we then prove the main Theorem of this section.}
\begin{prpstn}[Discrete orthogonality]\label{prop:disc_ortho} Let $\udg$, $(\varepsilon_h, u_h)$, $\vdg$, and $(v_h^\ast, w_h)$ be the unique solution of problems~\eqref{eq:dg_load}, ~\eqref{eq:mixed_direct}, ~\eqref{eq:dg_goal}, and~\eqref{eq:mix_form_goal}, respectively. Then, the following equalities hold:
\begin{enumerate}
\item[a)] $b_h\big(u_h \, , \, \vdg-v_h^\ast\big) = 0$,
\item[b)] $\big(v_h^\ast\, , \, \varepsilon_h\big)_{V_h}=0$,
\item[c)] $q(u_h) - l_h(v_h^\ast)=0$,
\item[d)] $b_h\big(\udg-u_h \, , \, v_h^\ast\big)=0$.
\end{enumerate}
\end{prpstn}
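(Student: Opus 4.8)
The plan is to obtain each of the four identities by testing one of the discrete problems against a carefully chosen function and combining the result with one of the other three formulations. The only structural fact I will use repeatedly is that the Lagrange multipliers and discrete solutions $u_h$ and $w_h^\ast$ lie in $U_h \subset V_h$, so they are admissible test functions both in the $V_h$-equations (first lines of the saddle points and the dG problems) and in the $U_h$-constraints (second lines). I would prove the identities in the order (a), (b), (c), (d), since the last two reuse the first two.

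For (a), I would test the second line of the adjoint saddle-point~\eqref{eq:mix_form_goal} with $w_h = u_h$ to obtain $b_h(u_h, v_h^\ast) = q(u_h)$, and test the adjoint dG problem~\eqref{eq:dg_goal} with $v_h = u_h$ to obtain $b_h(u_h, \vdg) = q(u_h)$; subtracting yields $b_h(u_h, \vdg - v_h^\ast) = 0$. For (b), I would test the first line of~\eqref{eq:mix_form_goal} with $v_h = \varepsilon_h$, giving $(v_h^\ast,\varepsilon_h)_{V_h} + b_h(w_h^\ast,\varepsilon_h) = 0$, and then annihilate the second term using the second line of the primal saddle-point~\eqref{eq:mixed_direct} with the admissible choice $w_h = w_h^\ast \in U_h$, which forces $b_h(w_h^\ast,\varepsilon_h)=0$ and hence $(v_h^\ast,\varepsilon_h)_{V_h}=0$.

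For (c), I would combine the relation $q(u_h)=b_h(u_h,v_h^\ast)$ already produced in (a) with the first line of the primal saddle-point~\eqref{eq:mixed_direct} tested at $v_h = v_h^\ast$; there the cross term $(\varepsilon_h, v_h^\ast)_{V_h}$ vanishes by (b), leaving $b_h(u_h,v_h^\ast)=l_h(v_h^\ast)$, so $q(u_h)=l_h(v_h^\ast)$. For (d), I would test the primal dG problem~\eqref{eq:dg_load} at $v_h = v_h^\ast$ to get $b_h(\udg,v_h^\ast)=l_h(v_h^\ast)$, rewrite the right-hand side as $q(u_h)$ using (c), and rewrite $q(u_h)$ as $b_h(u_h,v_h^\ast)$ using (a), concluding $b_h(\udg - u_h, v_h^\ast)=0$.

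I do not expect a genuine obstacle: every step is a single substitution of an admissible test function followed by a subtraction, and the computations are purely algebraic once the four formulations are in hand. The only point requiring care is the bookkeeping of which formulation supplies each equality together with the ordering dependence—(c) relies on both (a) and (b), and (d) relies on (a) and (c)—so I would present the identities in the stated sequence rather than as four independent derivations.
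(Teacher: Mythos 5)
Your proof is correct and takes essentially the same route as the paper: the same test-function substitutions ($u_h$ in~\eqref{eq:dg_goal} and the second line of~\eqref{eq:mix_form_goal} for (a); $\varepsilon_h$ in the first line of~\eqref{eq:mix_form_goal} together with $w_h^\ast$ in the second line of~\eqref{eq:mixed_direct} for (b); $v_h^\ast$ in the primal equations for (c) and (d)), with the same dependency ordering and implicit use of the symmetry of $(\cdot,\cdot)_{V_h}$. The only cosmetic difference is that the paper obtains (d) directly from the first equation of~\eqref{eq:mixed_direct}, the identity~\eqref{eq:dg_load}, and symmetry, whereas you route it through (c); these are the same substitutions chained in a slightly different order.
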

\begin{proof}
Equality $a)$ is a direct consequence of the identities~\eqref{eq:dg_goal} and the second equation in~\eqref{eq:mix_form_goal}. Since $v_h^\ast \in V_h$, using the first equation in~\eqref{eq:mix_form_goal}, and the second equation in~\eqref{eq:mixed_direct}, respectively, we obtain:
\begin{equation*}
\big(v_h^\ast\, , \, \varepsilon_h\big)_{V_h} = - b_h(w_h\, , \, \varepsilon_h) = 0,
\end{equation*}
proving $b)$. To prove $c)$, we consider the second equation in~\eqref{eq:mix_form_goal}, the first equation in~\eqref{eq:mixed_direct} and the result in $b)$. Finally, $d)$ is a consequence of the first equation in~\eqref{eq:mixed_direct}, the identity~\eqref{eq:dg_load}, and the symmetry of the discrete inner product in $V_h$.
\end{proof}
\begin{thrm}[Error in the quantity of interest] \label{prop:goal_error} Under the same hypotheses of Proposition~\ref{prop:disc_ortho}, the following identity holds:
\begin{align}\label{eq:qoi_dg}
 q\big(\udg - u_h\big) = b_h\big(\udg-u_h \, , \, \vdg - v_h^\ast\big) = \big(\varepsilon_h\, , \, \vdg - v_h^\ast\big)_{V_h} = l_h\big(\vdg-v_h^\ast\big).
\end{align}
If in addition the dG problem~\eqref{eq:dg_load} is adjoint consistent, then it holds:
\begin{align}
 q(u - u_h) = b_h\big(u-u_h \, , \, v^\ast - v_h^\ast\big) = l_h\big(v^\ast-v_h^\ast\big),
 \label{eq:qoi_u}
\end{align}
\tpur{where $u \in U$, $v^\ast \in V$ are the analytical solutions of problems~\eqref{eq:cont_dar} and ~\eqref{eq:cont_goal}, respectively.}
\end{thrm}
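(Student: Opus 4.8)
The plan is to read off the first chain in~\eqref{eq:qoi_dg} directly from the four discrete orthogonalities in Proposition~\ref{prop:disc_ortho}, the Riesz characterization~\eqref{eq:eh} of $\varepsilon_h$, and the two mixed systems, and then to upgrade to the continuous identity~\eqref{eq:qoi_u} by invoking adjoint consistency. For the first equality I would start from the dG adjoint equation~\eqref{eq:dg_goal}: since $\udg-u_h\in V_h$, testing with $v_h=\udg-u_h$ gives $q(\udg-u_h)=b_h(\udg-u_h,\vdg)$. Orthogonality~d) states $b_h(\udg-u_h,v_h^\ast)=0$, so I may subtract it to obtain $q(\udg-u_h)=b_h(\udg-u_h,\vdg-v_h^\ast)$. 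Next, the identity~\eqref{eq:eh} gives $(\varepsilon_h,w_h)_{V_h}=b_h(\udg-u_h,w_h)$ for every $w_h\in V_h$; choosing $w_h=\vdg-v_h^\ast$ converts the middle member into $(\varepsilon_h,\vdg-v_h^\ast)_{V_h}$. Finally, the first equation of the primal system~\eqref{eq:mixed_direct} reads $(\varepsilon_h,v_h)_{V_h}=l_h(v_h)-b_h(u_h,v_h)$; evaluating at $v_h=\vdg-v_h^\ast$ and discarding $b_h(u_h,\vdg-v_h^\ast)=0$ by orthogonality~a) yields the last member $l_h(\vdg-v_h^\ast)$, completing~\eqref{eq:qoi_dg}.

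For~\eqref{eq:qoi_u} I would run the same bookkeeping with $u,v^\ast$ in place of $\udg,\vdg$, now feeding in adjoint consistency. Adjoint consistency extends the dG adjoint identity to the regular exact solution, i.e.\ $b_h(w,v^\ast)=q(w)$ for $w\in V_{h,\#}$; applied to $w=u-u_h\in V_{h,\#}$ it gives $q(u-u_h)=b_h(u-u_h,v^\ast)$. To split off $v^\ast-v_h^\ast$ I need $b_h(u-u_h,v_h^\ast)=0$, which follows from primal strong consistency $b_h(u,v_h^\ast)=l_h(v_h^\ast)$ (Assumption~\ref{as:reg-consi_load}, valid since $v_h^\ast\in V_h$), from $b_h(u_h,v_h^\ast)=q(u_h)$ (second equation of~\eqref{eq:mix_form_goal}), and from orthogonality~c) $q(u_h)=l_h(v_h^\ast)$. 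Subtracting this zero gives the first equality $q(u-u_h)=b_h(u-u_h,v^\ast-v_h^\ast)$. For the last equality I would expand $b_h(u-u_h,v^\ast-v_h^\ast)=b_h(u,v^\ast-v_h^\ast)-b_h(u_h,v^\ast-v_h^\ast)$; the term $b_h(u_h,v^\ast-v_h^\ast)$ vanishes because $b_h(u_h,v^\ast)=q(u_h)$ (adjoint consistency with $u_h\in U_h\subset V_h$) cancels $b_h(u_h,v_h^\ast)=q(u_h)$, while $b_h(u,v^\ast-v_h^\ast)=l_h(v^\ast-v_h^\ast)$ by the (extended) primal consistency $b_h(u,\cdot)=l_h(\cdot)$.

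The routine part is the discrete chain~\eqref{eq:qoi_dg}, which is pure algebra built on Proposition~\ref{prop:disc_ortho}. The delicate point, and the reason~\eqref{eq:qoi_u} needs the extra hypothesis, is the legitimacy of evaluating the broken forms at the \emph{non-discrete} functions $u$ and $v^\ast$: identities such as $b_h(u-u_h,v^\ast)=q(u-u_h)$ and $b_h(u,v^\ast-v_h^\ast)=l_h(v^\ast-v_h^\ast)$ are exactly the consistency statements for test/trial arguments lying outside $V_h$. Primal strong consistency (Assumption~\ref{as:reg-consi_load}) only furnishes $b_h(u,v_h)=l_h(v_h)$ for $v_h\in V_h$; to reach $v^\ast\notin V_h$ I would argue that, for the regular conforming exact solutions $u,v^\ast\in H^2\cap H_0^1$, every jump and numerical-flux face term in $b_h$ and $l_h$ either vanishes or reduces to its continuous counterpart, so that $b_h(u,v^\ast)=b(u,v^\ast)=l(v^\ast)=l_h(v^\ast)$ through the continuous problems~\eqref{eq:cont_dar} and~\eqref{eq:cont_goal}. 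Verifying that these reductions hold, which is precisely the content of adjoint consistency of the chosen dG formulation, is the main obstacle; once granted, both identities follow from the same four-step cancellation used in the discrete case.
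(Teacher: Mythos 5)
Your proof is correct and follows essentially the same route as the paper: the discrete chain~\eqref{eq:qoi_dg} via the dG adjoint equation, orthogonality d), the Riesz identity~\eqref{eq:eh}, and orthogonality a), and then~\eqref{eq:qoi_u} from adjoint consistency combined with the discrete orthogonalities (the paper cancels through $b_h(u_h\,,\,v^\ast-v_h^\ast)=b_h(u_h\,,\,\vdg-v_h^\ast)=0$, while you equivalently cancel $b_h(u-u_h\,,\,v_h^\ast)=0$ via Assumption~\ref{as:reg-consi_load} and item c), a trivial rearrangement of the same facts). Your closing remark on extending $b_h(u,\cdot)=l_h(\cdot)$ beyond $V_h$ is a legitimate point that the paper leaves implicit in ``adjoint consistency,'' so if anything your write-up is slightly more careful.
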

\begin{proof}
  Using~\eqref{eq:dg_goal} and the bilinearity of the form $b_h$, we obtain:
  \begin{equation*}
    q\big(\udg - u_h\big) = b_h\big(\udg-u_h \, , \, \vdg - v_h^\ast \big) + b_h\big(\udg-u_h \, , \, v_h^\ast\big).
  \end{equation*}
The first equality in~\eqref{eq:qoi_dg} is a consequence of $d)$ in Proposition~\ref{prop:disc_ortho}. \tred{The second equality is a consequence of identity~\eqref{eq:eh}.} The third equality is obtained by employing the identity $a)$ of Proposition~\ref{prop:disc_ortho}.  Finally,~\eqref{eq:qoi_u} is a direct consequence of adjoint consistency and the identity
$b_h\big(u_h \, , \, v^\ast-v_h^\ast \big) = b_h\big(u_h \, , \, \vdg- v_h^\ast \big)=0.$
%
\end{proof}
\tpur{Proposition~\ref{prop:disc_ortho} implies that identity~\eqref{eq:qoi_dg} also holds if we scale $v_h^\ast$ by $c$ into $c v_h^\ast$, with $c \in \R$.  However, $v_h^\ast$ is the closest to the dG adjoint solution $\vdg$ in the $V_h$ norm (see Proposition~\ref{prop:min_dual}).}
 
An immediate consequence of the efficiency of the residual representative $\varepsilon_h$ (see Proposition~\ref{prop:efficiency}) is:
\begin{prpstn}[First a posteriori GoA error estimator]\label{prop:pre_GO} Let $\udg$ and $(\varepsilon_h, u_h)$ be the unique solution of problems~\eqref{eq:dg_load} and~\eqref{eq:mixed_direct}, respectively. It holds:
  \begin{equation}\label{eq:g_pre_estimator}
    \big|q(\udg-u_h)\big|  \leq \dfrac{1}{C_{\textrm{sta}}}\|q\|_{V_h^\ast} \|\varepsilon_h\|_{V_h}. 
  \end{equation}
  Moreover, if the saturation Assumption~\ref{as:saturation_load} is satisfied, then:
  \begin{equation}\label{eq:g_estimator1}
    \big|q(u-u_h)\big|  \leq \dfrac{1}{C_{\textrm{sta}}(1-\delta_s)}\|q\|_{V_{h, \#}^\ast} \|\varepsilon_h\|_{V_h},
  \end{equation}
  while if Assumption~\ref{as:saturation_load} is not satisfied, but the weaker condition of Assumption~\ref{as:saturation_load_weak} is satisfied, it holds:
  \begin{equation}\label{eq:g_estimator2}
\big|q(u-u_h)\big|  \leq \dfrac{1+\delta_w}{C_{\textrm{sta}}}\|q\|_{V_{h, \#}^\ast} \|\varepsilon_h\|_{V_h},
\end{equation}
where $u$ denotes the analytical solution of problem~\eqref{eq:cont_dar}, $V_{h,\#}$ is the space defined in Assumption~\ref{as:reg-consi_load}, and $\|\cdot\|_{V_{h,\#}^\ast}$ denotes the extension of the adjoint norm $\| \cdot\|_{V_h^\ast}$, defined in~\eqref{eq:adjoint_norm}, to the adjoint space of $V_{h,\#}$.
\end{prpstn}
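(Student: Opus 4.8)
The plan is to treat the three inequalities separately, each as a consequence of the error identity in Theorem~\ref{prop:goal_error} together with the robustness bounds of Propositions~\ref{prop:min_dual} and~\ref{prop:efficiency}. For the first estimate~\eqref{eq:g_pre_estimator}, I start from the chain of identities~\eqref{eq:qoi_dg}, keeping the representation $q(\udg-u_h) = (\varepsilon_h \, , \, \vdg - v_h^\ast)_{V_h}$. Applying the Cauchy--Schwarz inequality in the inner product $(\cdot,\cdot)_{V_h}$ gives $|q(\udg-u_h)| \le \|\varepsilon_h\|_{V_h}\,\|\vdg - v_h^\ast\|_{V_h}$, and Proposition~\ref{prop:min_dual} then bounds the adjoint factor by $\tfrac{1}{C_{\textrm{sta}}}\|q\|_{V_h^\ast}$, which yields~\eqref{eq:g_pre_estimator} at once.

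For the second estimate~\eqref{eq:g_estimator1}, I would pass from $\udg-u_h$ to $u-u_h$ by noting that $u-u_h \in V_{h,\#}$, so the extended functional satisfies $|q(u-u_h)| \le \|q\|_{V_{h,\#}^\ast}\,\|u-u_h\|_{V_h}$. Under the saturation Assumption~\ref{as:saturation_load}, Proposition~\ref{prop:efficiency} supplies the a~posteriori control $\|u-u_h\|_{V_h} \le \tfrac{1}{C_{\textrm{sta}}(1-\delta_s)}\|\varepsilon_h\|_{V_h}$ (estimate~\eqref{eq:a_posteriori} with $C_{\textrm{sat}} = 1/(1-\delta_s)$); combining the two inequalities produces~\eqref{eq:g_estimator1}.

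The third estimate~\eqref{eq:g_estimator2} is the one that requires a small detour, since the weak saturation Assumption~\ref{as:saturation_load_weak} controls only $\|u-\udg\|_{V_h}$ in terms of $\|\udg-u_h\|_{V_h}$ rather than providing a direct reliability bound. Here I would split $q(u-u_h) = q(u-\udg) + q(\udg-u_h)$ and bound each term through the extended functional by $\|q\|_{V_{h,\#}^\ast}$ times the corresponding $V_h$-norm. For the first term I use weak saturation, $\|u-\udg\|_{V_h} \le \delta_w\|\udg-u_h\|_{V_h}$, followed by the efficiency bound $\|\udg-u_h\|_{V_h} \le \tfrac{1}{C_{\textrm{sta}}}\|\varepsilon_h\|_{V_h}$ from~\eqref{eq:efficiency}; for the second term I apply the same efficiency bound directly. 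Adding the two contributions produces the factor $(1+\delta_w)/C_{\textrm{sta}}$ appearing in~\eqref{eq:g_estimator2}.

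I expect the delicate points to be bookkeeping rather than genuine obstacles. First, I must make explicit that $u-u_h$ and $u-\udg$ lie in $V_{h,\#}$, so that the extended adjoint norm $\|q\|_{V_{h,\#}^\ast}$ (interpreted as the supremum over $V_{h,\#}$ with the $\|\cdot\|_{V_h}$ normalization inherited from~\eqref{eq:adjoint_norm}) can legitimately be paired with the $V_h$-norm of the error. Second, I must keep careful track of where the full reliability constant $C_{\textrm{sat}}$ is invoked versus where only the one-sided efficiency bound is used; this distinction is exactly what generates the different multiplicative factors $1/(1-\delta_s)$ and $1+\delta_w$ in the two saturation regimes, and the triangle-inequality split in the weak case is the source of the extra unit in $1+\delta_w$.
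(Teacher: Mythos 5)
Your proof is correct, and for the second and third estimates it coincides with the paper's own argument: the paper states the proposition without a displayed proof, presenting it as ``an immediate consequence of the efficiency of the residual representative'' (Proposition~\ref{prop:efficiency}); that is, one pairs the error with the extended dual norm, $|q(u-u_h)| \le \|q\|_{V_{h,\#}^\ast}\,\|u-u_h\|_{V_h}$, and invokes~\eqref{eq:a_posteriori} with $C_{\textrm{sat}}=1/(1-\delta_s)$ for~\eqref{eq:g_estimator1}, respectively the triangle split $q(u-u_h)=q(u-\udg)+q(\udg-u_h)$ combined with weak saturation and the lower bound in~\eqref{eq:efficiency} for~\eqref{eq:g_estimator2} --- exactly as you do. Where you genuinely deviate is the first estimate~\eqref{eq:g_pre_estimator}: the paper's intended route is the elementary pairing $|q(\udg-u_h)| \le \|q\|_{V_h^\ast}\,\|\udg-u_h\|_{V_h}$ followed by $C_{\textrm{sta}}\,\|\udg-u_h\|_{V_h} \le \|\varepsilon_h\|_{V_h}$ from~\eqref{eq:efficiency}, which needs nothing beyond the primal objects named in the hypotheses. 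You instead route through the adjoint machinery: the identity $q(\udg-u_h)=(\varepsilon_h\,,\,\vdg-v_h^\ast)_{V_h}$ of Theorem~\ref{prop:goal_error}, Cauchy--Schwarz, and Proposition~\ref{prop:min_dual}. This is valid --- both results precede the proposition in the paper, and the adjoint problems~\eqref{eq:dg_goal} and~\eqref{eq:mix_form_goal} are well posed by~\eqref{eq:infsup_h_goal} even though their solutions are not among the stated hypotheses --- and it yields the identical constant, since Proposition~\ref{prop:min_dual} (effectively with $c=0$) bounds $\|\vdg-v_h^\ast\|_{V_h}$ by $\|q\|_{V_h^\ast}/C_{\textrm{sta}}$. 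The paper's route buys economy and minimal hypotheses; yours buys a preview of the mechanism behind Proposition~\ref{prop:GO}, where the same identity is used with the computable $\|\varepsilon_h^\ast\|_{V_h}$ in place of the crude a priori bound on $\|\vdg-v_h^\ast\|_{V_h}$. Your bookkeeping on the pairings --- that $u-u_h$ and $u-\udg$ lie in $V_{h,\#}$ so the extended norm applies, and (implicitly needed in your weak-saturation split) that $\|q\|_{V_h^\ast}\le\|q\|_{V_{h,\#}^\ast}$ so the $q(\udg-u_h)$ term can be absorbed into the common factor --- is precisely what must be checked, and you handle it correctly.
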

Even if inequality~\eqref{eq:g_pre_estimator} implies that the error in the estimation of $q(\udg-u_h)$ is controlled by $\|\varepsilon_h\|_{V_h}$, this estimate ignores the contribution of the adjoint saddle-point problem. \tred{In the following, we do not assume that the dG formulation is adjoint consistent to derive the GoA error estimates as we express our a posteriori error bounds using~\eqref{eq:qoi_dg}. Instead, we consider the following additional assumptions motivated by Assumptions~\ref{as:saturation_load} and~\ref{as:saturation_load_weak}:}
\begin{ssmptn}[adjoint saturation condition]\label{as:saturation_GO}
Let $\udg$ and $(\varepsilon_h, u_h)$ be the unique solution of problems~\eqref{eq:dg_load} and~\eqref{eq:mixed_direct}, respectively. There exists a mesh independent constant $\mu_s \in [0,1)$, such that $\big|q(u-\udg) \big| \le \mu_s \big|q(u-u_h) \big|$.
\end{ssmptn}
\begin{ssmptn}[adjoint weak condition]\label{as:saturation_GO_weak}
Let $\udg$ and $(\varepsilon_h, u_h)$ be the unique solution of problems~\eqref{eq:dg_load} and~\eqref{eq:mixed_direct}, respectively. There exists a mesh independent constant $\mu_w > 0$, such that $\big|q(u-\udg) \big| \le \mu_w \big|q(\udg-u_h) \big|$.
\end{ssmptn}  
Then, the following result follows:
\begin{prpstn}[Second a posteriori GoA error estimator]\label{prop:GO}
Let $\udg$, $(\varepsilon_h, u_h)$, $\vdg$, $(v_h^\ast, w_h^\ast)$ be the unique solutions of problems~\eqref{eq:dg_load},~\eqref{eq:mixed_direct},~\eqref{eq:dg_goal}, and~\eqref{eq:mix_form_goal}, respectively, and let $\varepsilon_h^\ast$ be the a posteriori residual estimator of~\eqref{eq:error_est_goal}. The following holds true:
\begin{align}\label{eq:GO_pre_estimator}
\big| q(\udg-u_h) \big|  = \big|(\varepsilon_h \, , \, \vdg-v_h^\ast )_{V_h} \big| \leq  \dfrac{1}{C_{\textrm{sta}}}\|\varepsilon_h\|_{V_h} \|\varepsilon_h^\ast\|_{V_h}.
\end{align}
Moreover, if the saturation Assumption~\ref{as:saturation_GO} is satisfied, it holds:
\begin{align}\label{eq:GO_estimator}
 \big| q(u-u_h) \big|  \leq \dfrac{1}{1-\mu_s}  \big|(\varepsilon_h \, , \, \vdg-v_h^\ast )_{V_h} \big|  \leq  \dfrac{1}{C_{\textrm{sta}}(1-\mu_s)}\|\varepsilon_h\|_{V_h} \|\varepsilon_h^\ast \|_{V_h}.
\end{align}
while if only the weaker Assumption~\ref{as:saturation_GO_weak} is satisfied, then:
\begin{align}\label{eq:GO_estimator_weak}
 \big| q(u-u_h) \big|  \leq (1+\mu_w)  \big|(\varepsilon_h \, , \, \vdg-v_h^\ast )_{V_h} \big|  \leq  \dfrac{1+\mu_w}{C_{\textrm{sta}}}\|\varepsilon_h\|_{V_h} \|\varepsilon_h^\ast \|_{V_h},
\end{align}
 with $u$ being the analytical solution of problem~\eqref{eq:cont_dar}.  
\end{prpstn}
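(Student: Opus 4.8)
The plan is to assemble the estimate from three ingredients that are already established earlier in the excerpt: the exact identity for the computable goal error, the Cauchy--Schwarz inequality in the $V_h$ inner product, and the robustness bound for the adjoint residual representative. First I would invoke the first two equalities of~\eqref{eq:qoi_dg} in Theorem~\ref{prop:goal_error}, which give $q(\udg-u_h) = (\varepsilon_h\,,\,\vdg-v_h^\ast)_{V_h}$; taking absolute values yields the leftmost equality of~\eqref{eq:GO_pre_estimator} with no further work. To turn the equality into the asserted bound, I would apply Cauchy--Schwarz to the inner product, obtaining $|(\varepsilon_h\,,\,\vdg-v_h^\ast)_{V_h}| \le \|\varepsilon_h\|_{V_h}\,\|\vdg-v_h^\ast\|_{V_h}$, and then control the second factor with the first inequality of~\eqref{eq:bnd_uh-thetah_goal} in Proposition~\ref{th:err_rep_goal}, namely $C_{\textrm{sta}}\,\|\vdg-v_h^\ast\|_{V_h} \le \|\varepsilon_h^\ast\|_{V_h}$. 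Dividing by $C_{\textrm{sta}}$ and substituting delivers~\eqref{eq:GO_pre_estimator}.

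For the two remaining estimates I would split the continuous goal error through the dG solution as $q(u-u_h) = q(u-\udg) + q(\udg-u_h)$ and apply the triangle inequality. Under the adjoint saturation Assumption~\ref{as:saturation_GO} I substitute $|q(u-\udg)| \le \mu_s\,|q(u-u_h)|$ and absorb that term on the left, giving $(1-\mu_s)\,|q(u-u_h)| \le |q(\udg-u_h)|$; since $\mu_s<1$, division by $1-\mu_s$ followed by the identity and bound~\eqref{eq:GO_pre_estimator} already proven produces~\eqref{eq:GO_estimator}. Under the weaker Assumption~\ref{as:saturation_GO_weak} I instead substitute $|q(u-\udg)| \le \mu_w\,|q(\udg-u_h)|$ directly, so that $|q(u-u_h)| \le (1+\mu_w)\,|q(\udg-u_h)|$, and conclude~\eqref{eq:GO_estimator_weak} in the same way.

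The argument is essentially a chaining of existing results, so I anticipate no deep obstacle; the only delicate points are bookkeeping ones. I must be careful to use the \emph{computable} identity~\eqref{eq:qoi_dg} rather than the adjoint-consistent identity~\eqref{eq:qoi_u}, since the stated purpose of this estimator is to avoid assuming adjoint consistency of the reference dG formulation. I also need to verify that the two assumptions are applied in the correct combination: Assumption~\ref{as:saturation_GO} measures $q(u-\udg)$ against $q(u-u_h)$ and therefore requires the left-absorption step together with the constraint $\mu_s<1$, whereas Assumption~\ref{as:saturation_GO_weak} measures it against the computable quantity $q(\udg-u_h)$ and needs no smallness of $\mu_w$. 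Keeping these two paths distinct, and matching each to the right triangle-inequality manipulation, is the main place an error could creep in.
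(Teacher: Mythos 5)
Your proposal is correct and follows essentially the same route as the paper's proof: the identity~\eqref{eq:qoi_dg} combined with Cauchy--Schwarz and the adjoint bound~\eqref{eq:bnd_uh-thetah_goal} for~\eqref{eq:GO_pre_estimator}, then the triangle inequality $|q(u-u_h)| \leq |q(u-\udg)| + |q(\udg-u_h)|$ with the two saturation assumptions for~\eqref{eq:GO_estimator} and~\eqref{eq:GO_estimator_weak}. Your write-up is in fact more explicit than the paper's (which leaves the Cauchy--Schwarz step and the absorption argument under Assumption~\ref{as:saturation_GO} implicit), and your bookkeeping of the two assumption paths, including the role of $\mu_s<1$, is exactly right.
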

\begin{proof}
\eqref{eq:GO_pre_estimator} is a direct consequence of the identity~\eqref{eq:qoi_dg} and the bound for the adjoint problem~\eqref{eq:bnd_uh-thetah_goal}. Finally, Equations~\eqref{eq:GO_estimator} and~\eqref{eq:GO_estimator_weak} are consequence of the triangular inequality $\big|q(u-u_h)\big| \leq \big|q(u-\udg)\big| + \big|q(\udg-u_h)\big|$. 
\end{proof}
If the adjoint saturation Assumption~\ref{as:saturation_GO} is satisfied, the following result also holds:
\begin{crllr}\label{coro:aposteriori} Under the same hypotheses of Proposition~\ref{prop:GO}, if the adjoint saturation Assumption~\ref{as:saturation_GO} is satisfied, then:
\begin{align}\label{eq:go_efficiency}
\big|(\varepsilon_h \, , \, \vdg-v_h^\ast )_{V_h} \big| \leq \dfrac{1}{1+\mu_s}\big| q(u-u_h) \big|.
\end{align}
\end{crllr}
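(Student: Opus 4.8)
The plan is to first collapse the left-hand side of~\eqref{eq:go_efficiency} to a scalar goal quantity and then argue entirely at the level of real numbers. By Theorem~\ref{prop:goal_error}, the identity chain~\eqref{eq:qoi_dg} gives $\big(\varepsilon_h,\vdg-v_h^\ast\big)_{V_h}=q\big(\udg-u_h\big)$ exactly, so the inner product on the left of~\eqref{eq:go_efficiency} is nothing but $q(\udg-u_h)$ and the residual representatives and the bilinear form disappear. Hence the corollary is equivalent to the purely scalar inequality $\big|q(\udg-u_h)\big|\le \tfrac{1}{1+\mu_s}\big|q(u-u_h)\big|$, and all remaining work concerns the three real numbers $q(u-u_h)$, $q(u-\udg)$ and $q(\udg-u_h)$.

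Next I would use the linearity of $q$ to write the same splitting $q(u-u_h)=q(u-\udg)+q(\udg-u_h)$ used in the proof of Proposition~\ref{prop:GO}, together with the adjoint saturation Assumption~\ref{as:saturation_GO}, namely $\big|q(u-\udg)\big|\le \mu_s\big|q(u-u_h)\big|$ with $\mu_s\in[0,1)$. The aim is to insert this bound so that the saturation parameter ends up in a denominator $1+\mu_s$ rather than as a multiplicative factor $1+\mu_s$. To that end I would examine the sign relationship between the dG goal error $q(u-\udg)$ and the goal increment $q(\udg-u_h)$, and test whether a reverse-triangle estimate, or a rearrangement of the reliability bound~\eqref{eq:GO_estimator}, can be combined with the saturation hypothesis so as to place $\mu_s$ in the denominator.

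The step I expect to be the main obstacle is precisely the recovery of the reciprocal constant $\tfrac{1}{1+\mu_s}$. The elementary estimates available from Assumption~\ref{as:saturation_GO} alone only \emph{bracket} the goal increment: the triangle inequality gives the upper bound $\big|q(\udg-u_h)\big|\le\big|q(u-u_h)\big|+\big|q(u-\udg)\big|\le(1+\mu_s)\big|q(u-u_h)\big|$, while the decomposition used in~\eqref{eq:GO_estimator} gives the lower bound $(1-\mu_s)\big|q(u-u_h)\big|\le\big|q(\udg-u_h)\big|$. Since $1-\mu_s\le\tfrac{1}{1+\mu_s}\le 1+\mu_s$, the claimed constant lies strictly inside this bracket, so it is \emph{not} delivered by these elementary bounds, and bounding $q(u-\udg)$ and $q(\udg-u_h)$ independently cannot produce it. Sharpening to $\tfrac{1}{1+\mu_s}$ therefore hinges on extracting additional directional (sign-alignment) information linking the two goal errors; isolating the precise hypothesis under which the two errors align, establishing the corresponding additive identity $\big|q(u-u_h)\big|=\big|q(u-\udg)\big|+\big|q(\udg-u_h)\big|$, and verifying consistency with the reliability bound~\eqref{eq:GO_estimator} is the delicate part of the argument that I would have to carry out carefully.
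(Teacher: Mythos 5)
Your opening reduction is exactly the paper's: by \eqref{eq:GO_pre_estimator} (equivalently \eqref{eq:qoi_dg}), $\big(\varepsilon_h\,,\,\vdg-v_h^\ast\big)_{V_h}=q\big(\udg-u_h\big)$, and the paper's entire proof of Corollary~\ref{coro:aposteriori} is then precisely the one-line estimate you wrote down and then discarded: the triangle inequality $\big|q(\udg-u_h)\big|\le\big|q(u-\udg)\big|+\big|q(u-u_h)\big|$ combined with Assumption~\ref{as:saturation_GO}, which yields $\big|q(\udg-u_h)\big|\le(1+\mu_s)\big|q(u-u_h)\big|$. There is no hidden sign-alignment argument in the paper, and the search you propose in your final paragraph cannot succeed: Assumption~\ref{as:saturation_GO} constrains only the magnitudes of $q(u-\udg)$ and $q(u-u_h)$, so the anti-aligned configuration $q(u-\udg)=-\mu_s\,q(u-u_h)\neq 0$ is admissible; it satisfies the saturation hypothesis with equality, yet gives $q(\udg-u_h)=(1+\mu_s)\,q(u-u_h)$ and hence $\big|\big(\varepsilon_h\,,\,\vdg-v_h^\ast\big)_{V_h}\big|=(1+\mu_s)\big|q(u-u_h)\big|>\frac{1}{1+\mu_s}\big|q(u-u_h)\big|$ whenever $\mu_s>0$. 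In other words, the printed inequality \eqref{eq:go_efficiency} does not follow from the stated hypotheses, and your bracketing observation ($1-\mu_s\le\frac{1}{1+\mu_s}\le 1+\mu_s$, with both extremes attainable) is in effect already a demonstration of this.

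What you have detected is a misplaced factor in the paper: the statement that the quoted proof actually establishes is $\frac{1}{1+\mu_s}\big|\big(\varepsilon_h\,,\,\vdg-v_h^\ast\big)_{V_h}\big|\le\big|q(u-u_h)\big|$, i.e., $\big|\big(\varepsilon_h\,,\,\vdg-v_h^\ast\big)_{V_h}\big|\le(1+\mu_s)\big|q(u-u_h)\big|$, with the reciprocal constant on the wrong side of the inequality in \eqref{eq:go_efficiency}. The corrected version is also the one consistent with the corollary's use in Section~\ref{sec:upper_estimates}: together with the reliability bound \eqref{eq:GO_estimator} it gives the two-sided equivalence $(1-\mu_s)\big|q(u-u_h)\big|\le\big|\big(\varepsilon_h\,,\,\vdg-v_h^\ast\big)_{V_h}\big|\le(1+\mu_s)\big|q(u-u_h)\big|$, which is what ``robust'' means there. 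So the gap in your proposal is not a missing idea but a misdirected goal: finish by stopping at the elementary upper bound $(1+\mu_s)\big|q(u-u_h)\big|$ --- that \emph{is} the paper's proof --- and flag the constant in the statement as a typographical error rather than attempting to sharpen to $\frac{1}{1+\mu_s}$, which the explicit configuration above shows is impossible under Assumption~\ref{as:saturation_GO} alone.
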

\begin{proof}
Direct consequence of the triangular inequality $\big|q(\udg-u_h)\big| \leq \big|q(u-\udg)\big| + \big|q(u-u_h)\big|$ and identity~\eqref{eq:GO_pre_estimator}.
\end{proof}

\subsection{Localizable upper-bound estimates}\label{sec:upper_estimates}

\tpur{Inequalities~\eqref{eq:GO_estimator_weak} and~\eqref{eq:go_efficiency} imply that the estimate $\big| (\varepsilon_h, \vdg-v_h^\ast )_{V_h}\big|$ is robust if Assumption~\ref{as:saturation_GO} is satisfied,  while it is efficient if only Assumption~\ref{as:saturation_GO_weak} is satisfied. In general, neither is true for the estimate $\|\varepsilon_h\|_{V_h} \|\varepsilon_h^\ast \|_{V_h}$, as $\|\varepsilon_h^\ast \|_{V_h}$ may be sub-optimal (see Section~\ref{sec:error_goal})}. \tred{ We first decompose the inner product into local contributions $(\cdot\, , \cdot)_T$ to derive local estimates of the error in the quantity of interest: }  %
\tpur{
  \begin{enumerate}
  \item $(\cdot\, , \cdot)_T$ is an inner product for $V_h|_{T}$,
  \item $\displaystyle (w_h , v_h)_{V_h} = \sum_{T\in \Par_h} (v_h \, , w_h)_T, \, \forall \, w_h,v_h \in V_h$,
  \end{enumerate}
  where $\Par_h$ denotes a conforming partition of the domain $\Omega$ (cf.~Section~\ref{sec:disc_sett}).} \tred{Defining $\|\cdot\|_{T}^2:= (\cdot,\cdot)_{T}$, we propose two strategies to obtain local estimations of the quantity $(\varepsilon_h\, , \, \vdg-v_h^\ast)_{V_h}$}., \tpur{which solve a third discrete problem. The first strategy (Estimator $E(\varepsilon_h\, , \, \vdg-v_h^\ast)$) solves problem~\eqref{eq:dg_goal} to obtain explicitly $\vdg$, and marks each element with the upper bound $\|\varepsilon_h\|_{T} \, \| \vdg-v_h^\ast\|_{T}$;}
\tpur{the second strategy (Estimator $E(\varepsilon_h\, , \, \varepsilon_h^\ast)$) solves the adjoint residual representative $\varepsilon_h^\ast$ from problem~\eqref{eq:error_est_goal}, and marks each element with the upper bound $\|\varepsilon_h\|_{T} \, \| \varepsilon_h^\ast\|_{T}$. Thus, we require the following assumption:}
\begin{ssmptn}[Local a posteriori adjoint residual estimation]\label{as:local_bound} There exists a mesh independent positive constant $C_{\textrm{sta}}^\ast$, such that:
\begin{equation}
\left\|\vdg-v_h \right\|_T \leq \dfrac{1}{C_{\textrm{sta}}^\ast} \left\|\varepsilon_h^\ast \right\|_T, \text{ for all } T \in \Par_h.
\end{equation}
\end{ssmptn}
\tred{This assumption ensures the existence of a localizable a posteriori error estimate in line with~\eqref{eq:bnd_uh-thetah_goal}.} \tpur{When considering direct solvers, the third problem's solution cost is the same for both estimators, as they require to invert a matrix of the same size. When considering iterative solvers, solving~\eqref{eq:dg_goal} requires to form and invert an equation system that is different from the saddle-point formulation~\eqref{eq:mixed_direct} if $U_h$ is a proper subspace of $V_h$. Obtaining $\varepsilon_h^\ast$ also solves an additional problem (see~\eqref{eq:error_est_goal}); however, it inverts the Gram matrix coming from the inner product $(\cdot\, , \, \cdot)_{V_h}$, which is always symmetric and positive definite. Moreover, when using an iterative solver for the saddle-point problem, a preconditioner for the Gram matrix is already available (cf.~\cite{ rojas2019adaptive}), which reduces the computational cost of $\varepsilon_h^\ast$ significantly when compared to computing $\varepsilon_h$. Effectively, computing $\varepsilon_h^\ast$ is equivalent to computing an extra outer loop of the iteration to compute the adjoint problem.}

\begin{rmrk}[Upper bound for the energy norm residual representative]\label{rem:equal_inner_rem}
$(\varepsilon_h\, , \, \varepsilon_h)_T$ is the square of the local estimation of the energy norm error estimator considered in~\cite{ rojas2019adaptive}. To compare our results with those of~\cite{ rojas2019adaptive}, we square the proportion of the total residual estimates employed in the D{\"o}rfler bulk-chasing criterion.
\end{rmrk}
\begin{rmrk}[Localization of the inner product for the model problem]
We localize the inner product~\eqref{eq:vh_norm} for the model problem as:
\begin{equation*}
\displaystyle (w_h\, v_h)_{V_h} = (w_h\, , \, v_h)_{T} := (w_h\, , \, v_h)_{\textrm{loc}, T} + \dfrac{1}{2}\sum_{e \in \Sk_h^0 \cap \partial T} S_e(w_h \, , \, v_h), 
\end{equation*}
with
\begin{align*}
(w_h \, , \, v_h)_{\textrm{loc}, T} := & \displaystyle \, \kappa \int_T \nabla w_h \cdot \nabla v_h + \int_T ({\gamma} + \beta L^{-1} ) w_h \, v_h
 + \displaystyle \beta_l \, h_T \int_T (\Vb \cdot \nabla w_h )( \Vb \cdot \nabla v_h)
  \\
                                       & \displaystyle \, + \sum_{e \in \Sk_h^\partial \cap \partial T}\int_e \big(\kappa \, \eta_e  +  \dfrac{1}{2}  |\Vb \cdot \Vn_e| \big) w_h \,  v_h, \\ %
S_e(w_h \, , \, v_h) := &  \displaystyle \int_e \big(\kappa \, \eta_e  +  \dfrac{1}{2}  |\Vb \cdot \Vn_e| \big) \llbracket w_h \rrbracket \, \llbracket v_h \rrbracket.
\end{align*}
\end{rmrk}

\section{Goal-oriented-adaptivity algorithm}\label{sec:GoA_strategy}
\tred{We now summarize the GoA algorithm. We consider a dG space $V_h$, a subspace $U_h \subset V_h$ conforming in $U$ (e.g.,  standard FEM space of continuous piece-wise polynomial functions), and perform an iterative loop consisting of the following four steps:}
\begin{enumerate}
\item \label{en:step1}{\bf Primal problem:} We solve the primal saddle-point  problem:
\begin{equation*}
\left\{ 
\begin{array}{l}
\text{Find } (\varepsilon_h, u_h) \in V_h \times U_h, \text{ such that}: \\
\begin{array}{lll}
(\varepsilon_h \, , \, v_h)_{V_h} + b_h(u_h \, , \, v_h) \hspace{-0.2cm} & = \  l_h(v_h),   &\quad \forall\, v_h \in V_h, \\
b_h(w_h \, , \, \varepsilon_h) & = \  0,  &\quad \forall\, w_h \in U_h,
\end{array}
\end{array}
\right.
\end{equation*}  
\item \label{en:step2} {\bf Adjoint problem:} We solve the adjoint saddle-point problem:
\begin{equation*}
\left\{ 
\begin{array}{l}
\text{Find } (v_h^\ast, w_h^\ast) \in V_h \times U_h, \text{ such that}:\\ 
\begin{array}{lll}
(v_h^\ast \, , \, v_h)_{V_h} + b_h(w_h^\ast \, , \, v_h) \hspace{-0.2cm} & = \ 0,   &\quad \forall\, v_h \in V_h, \\
b_h(w_h \, , \, v_h^\ast) & = \ q(w_h),  &\quad \forall\, w_h \in U_h,
\end{array}
\end{array}
\right.
\end{equation*}
\item \label{en:step3} {\bf Residual estimation:} We consider one of the following alternatives: 
\begin{itemize}
\item[A)] {\bf Adjoint dG based estimator}. We solve the adjoint dG problem:
\begin{equation*}
\left\{\begin{array}{l}
\text{Find } \vdg \in V_h,  \text{ such that:} \\
b_h(v_h\, , \, \vdg) = q(v_h), \quad \forall \, v_h \in V_h,
\end{array}
\right.
\end{equation*} 
\tpur{we estimate the error in the QoI as $E\big(\varepsilon_h \, , \,  \vdg-v_h^\ast\big) := \big|\big(\varepsilon_h \, , \,  \vdg-v_h^\ast\big)_{V_h}\big|$ and its local upper-bounds $\|\varepsilon_h\|_{T} \, \| \vdg-v_h^\ast\|_{T}$.}
\item[B)] {\bf Adjoint residual based estimator}. We solve the residual representative problem:
\begin{equation*}
\left\{\begin{array}{l}
\text{Find } \varepsilon_h^\ast \in V_h,  \text{ such that:} \\
(\varepsilon_h^\ast, v_h)_{V_h} = q(v_h)-b_h(v_h,v_h^\ast), \quad \forall \, v_h \in V_h,
\end{array}
\right.
\end{equation*}
\tpur{we estimate the error in the QoI as $E\big(\varepsilon_h\, , \, \varepsilon_h^\ast\big) := \big|\big(\varepsilon_h\, , \, \varepsilon_h^\ast\big)_{V_h}\big|$ and its local upper-bounds  $\|\varepsilon_h\|_{T} \, \| \varepsilon_h^\ast\|_{T}$.}
\end{itemize}
\item \label{en:step4} {\bf Marking criteria:} We use the local estimations to guide the goal oriented adaptivity by using the D{\"o}rfler bulk-chasing marking criterion (see~\cite{ dorfler1996convergent}). The strategy first orders in a decreasing order the local error estimates. Then, it marks for refinement the elements for which the cumulative sum remains smaller than a given percentage of the total error estimate.
\end{enumerate}
This procedure requires no further a posteriori error estimation. Additionally, steps~\ref{en:step1} and~\ref{en:step2} require the solution of the same saddle-point system. Thus, the problem becomes a single system with multiple right-hand sides. Finally, this process does not require the adjoint consistency assumption as in the standard dG goA strategy, since we obtain upper bounds in terms of the dG discrete solutions. This subtle insight significantly enlarges the range of available formulations that we can apply to this class of problems. 

\section{\tred{Numerical examples}}\label{sec:numerical_examples}
We consider several test cases focusing on advection-diffusion-reaction problems. These examples demonstrate the performance of the GoA strategy described in Section~\ref{sec:GoA_strategy}. We use FEniCS~\cite{  alnaes2015fenics} to perform the simulations. We consider a QoI of the form:
\begin{equation}
\displaystyle q(u)= {1\over|\Omega_0|}\int_{\Omega_0} u,
\end{equation}
where $u$ denotes the analytic solution of the corresponding problem, and $\Omega_0$ is a subdomain of the physical domain $\Omega$. For a given polynomial degree $p\geq1$ and $\Delta_p \in \{0,1\}$, we consider the test space $V_h$ to be a standard discontinuous piece-wise polynomial space of degree $p+\Delta_p$, and the trial space $U_h$ as a standard FEM subspace of continuous piece-wise polynomial functions of degree $p$. The initial mesh is $\Omega_0$-conforming. We then perform a loop following the standard modules in adaptive procedures:
$$
\text{ SOLVE } \rightarrow \text{ ESTIMATE } \rightarrow \text{ MARK }
\rightarrow  \text{ REFINE. }
$$
The estimation procedure considers independent adaptive mesh refinements based on the two GoA estimators defined in Step~\ref{en:step3} of the GoA algorithm, Section~\ref{sec:GoA_strategy}, and we compare their performance with respect to the energy norm-based error estimate $E\big(\varepsilon_h \, , \,  \varepsilon_h \big):=\|\varepsilon_h\|^2_{V_h}$ (see Remark~\ref{rem:equal_inner_rem}).\\
The marking procedure follows the D{\"o}rfler bulk-chasing criterion with the corresponding fraction to be 20\% (see Remark~\ref{rem:equal_inner_rem}). Finally, we employ a bisection-type refinement criterion~\cite{ bank1983some}. In the first two examples, we use $LU$ direct solver. In the last example, we use an iterative scheme (cf.~\cite{ rojas2019adaptive}) on the resulting multiple right hand sides.\\
\tred{In the numerical examples,  we compare the error plots against the expected optimal convergence for the error in the QoI (see~\cite{ feischl2016abstract}):
\begin{eqnarray}\label{eq:optimal_rate}
\textrm{NDOFs}^{-\frac{2 \, (p + r)}{d}},
\end{eqnarray}
with $d=2,3$, being the dimentionality of the physical domain $\Omega$,  $p\geq 1$ the polynomial degree of the trial space,  $\textrm{NDOFs}$ the total number of degrees of freedom of the saddle-point problem~\eqref{eq:mixed_direct},  and $r=1,1/2,0$ for reaction-dominated,  advection-dominanted, and diffusion-dominated problems,  respectively.}
\subsection{Diffusion problem}

\begin{figure}[ht!]
    \centering
    \includegraphics[width=0.4\textwidth]{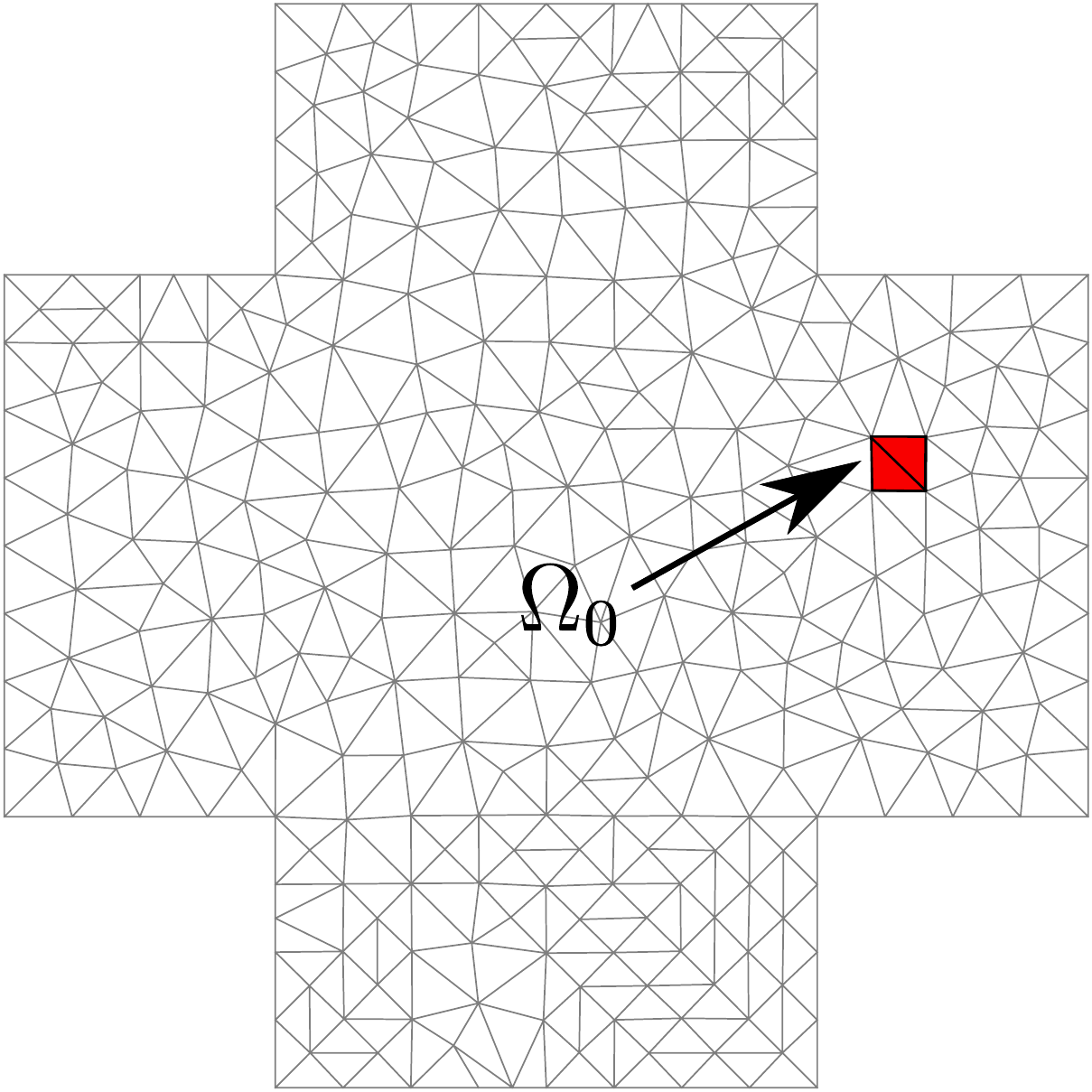}
    %
    \caption{Cross-shaped domain initial mesh}
    \label{fig:cross_shape_data_set}
\end{figure}
\begin{figure}[ht!]
    \centering
    \begin{subfigure}[b]{0.45\textwidth}
    \includegraphics[width=\textwidth]{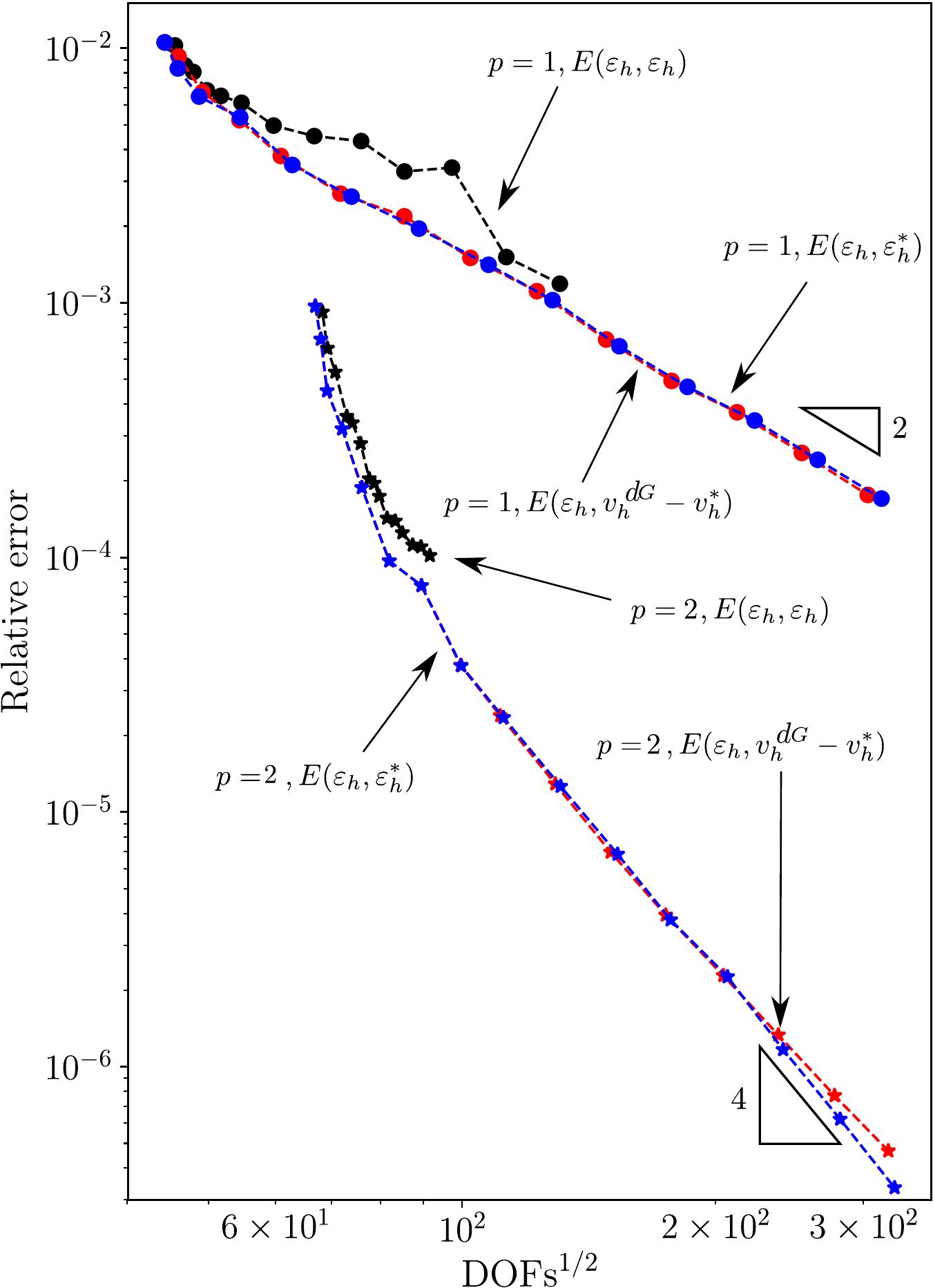}
        \caption{Symmetric Interior Penalty (SIP)}
        \label{fig:cross_goal_sip}
    \end{subfigure} 
    \hspace{0.1cm}
    \begin{subfigure}[b]{0.45\textwidth}
        \includegraphics[width=\textwidth]{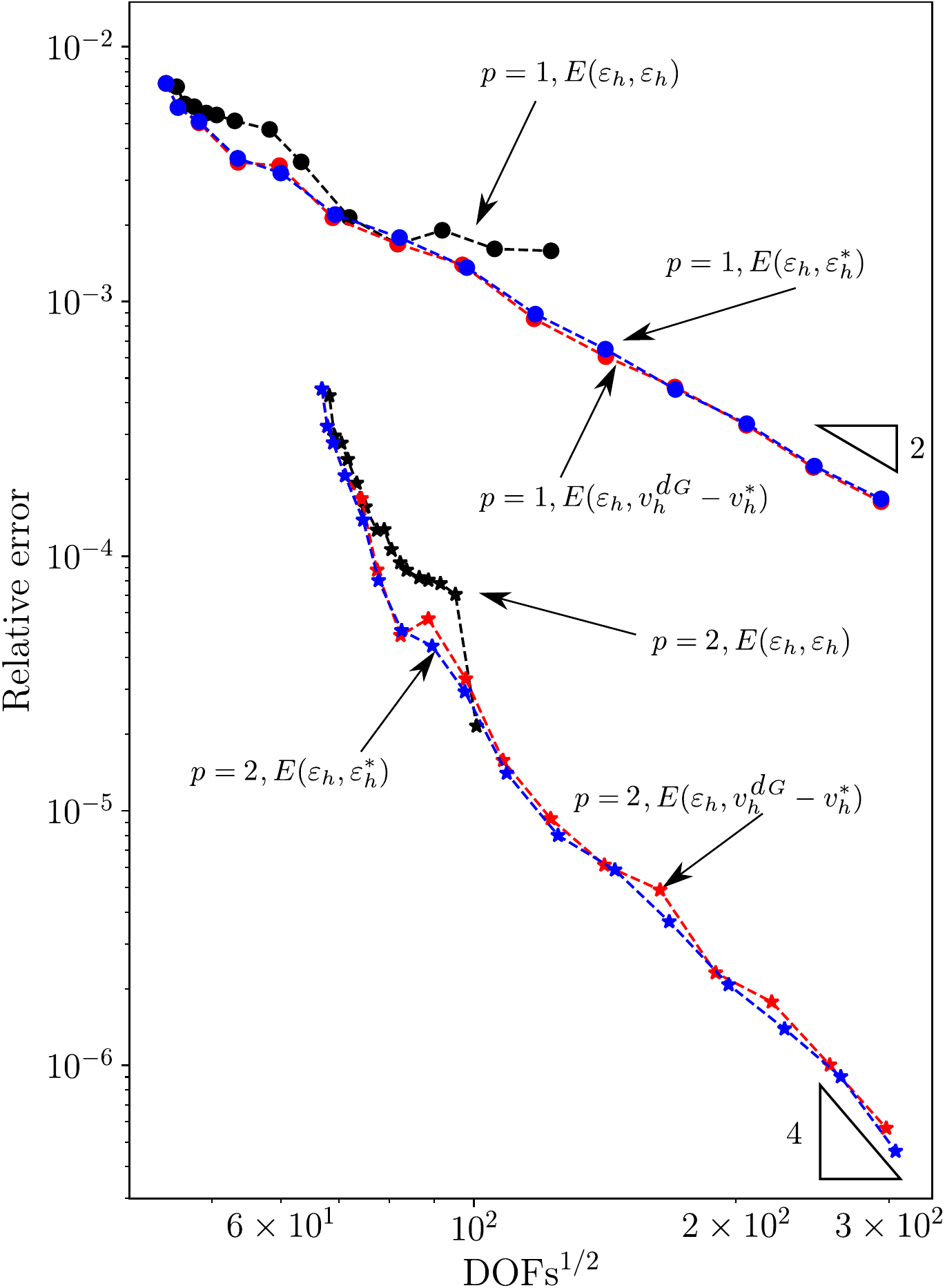}
        \caption{Nonsymmetric Interior Penalty (NIP)}
        \label{fig:cross_goal_nip}
    \end{subfigure}
    \caption{Relative error comparison in the quantity of interest (QoI) using the Symmetric Interior Penalty (SIP) and Nonsymmetric Interior Penalty (NIP) schemes for $p=1,2$ and $\Delta_p = 0$.}
    \label{fig:cross_goal}
\end{figure}

\begin{figure}[ht!]
    \centering
    \begin{subfigure}[b]{0.45\textwidth}
    \includegraphics[width=\textwidth]{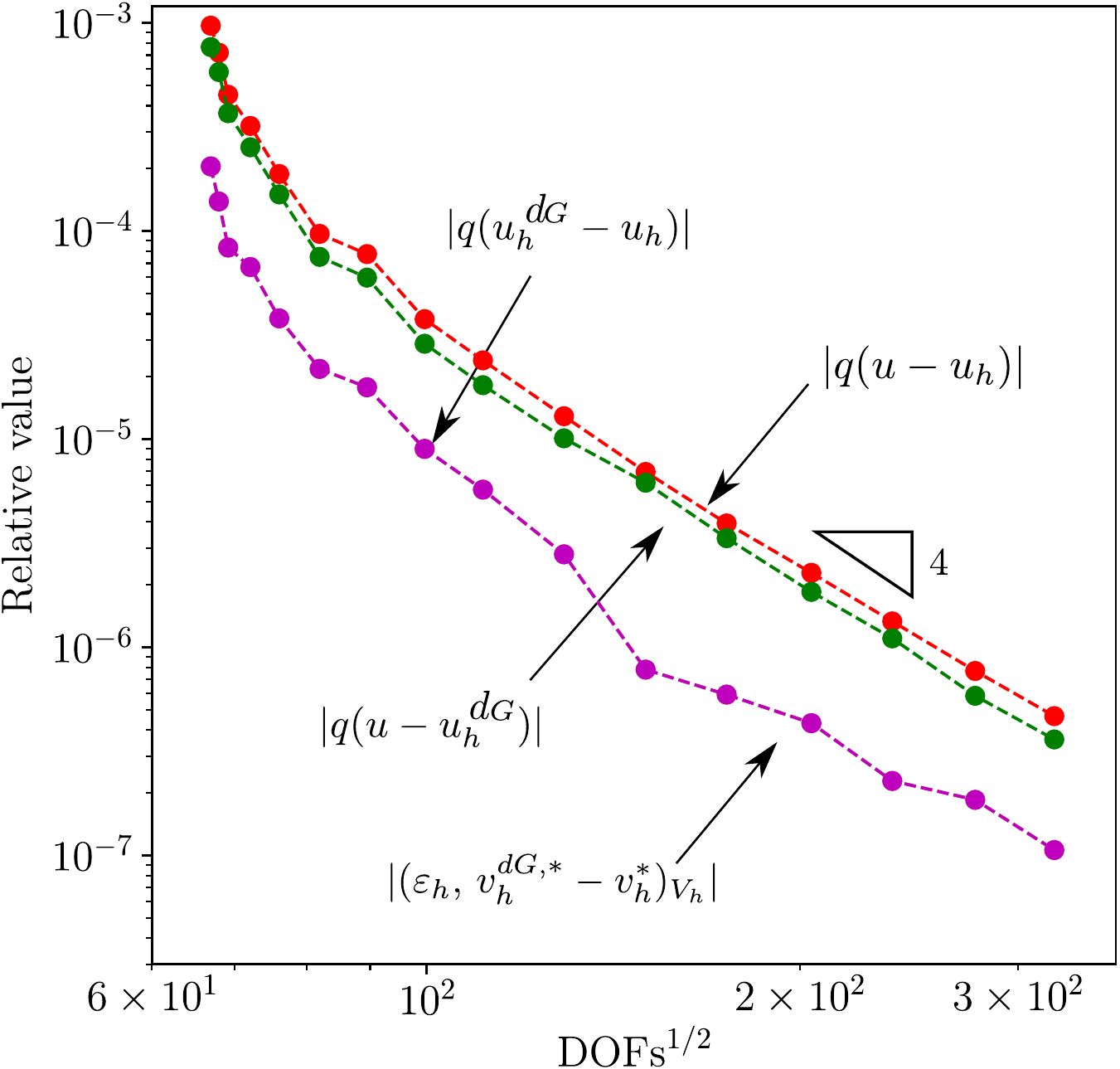}
        \caption{SIP with estimator $E\big(\varepsilon_h\, , \, \vdg-v_h^\ast\big)$}
        \label{fig:cross_A_sip_22}
    \end{subfigure} 
    \hspace{0.2cm}
    \begin{subfigure}[b]{0.45\textwidth}
        \includegraphics[width=\textwidth]{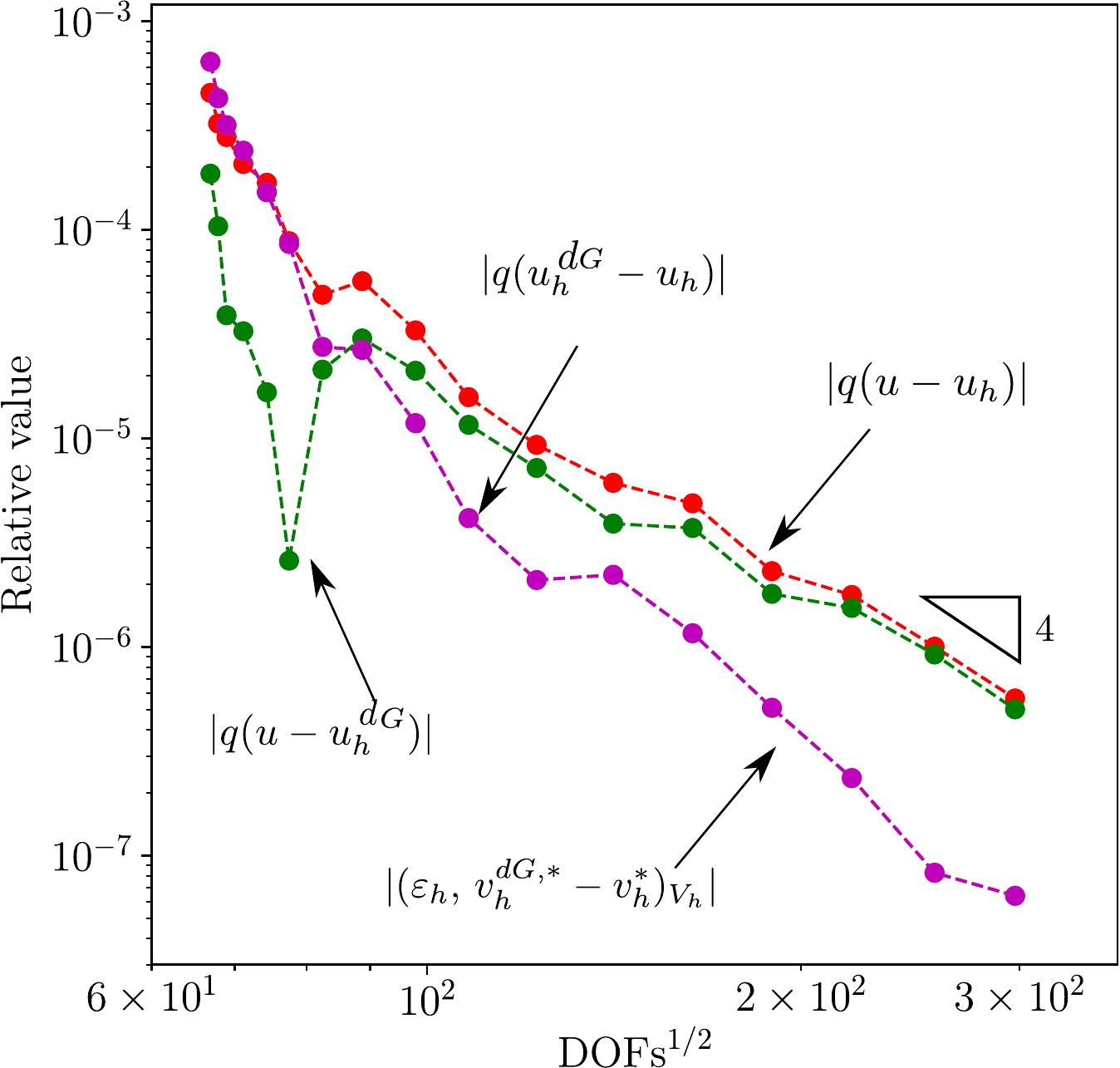}
        \caption{NIP with estimator $E\big(\varepsilon_h\, , \, \vdg-v_h^\ast\big)$}
        \label{fig:cross_A_nip_22}
    \end{subfigure}
    \begin{subfigure}[b]{0.45\textwidth}
    \includegraphics[width=\textwidth]{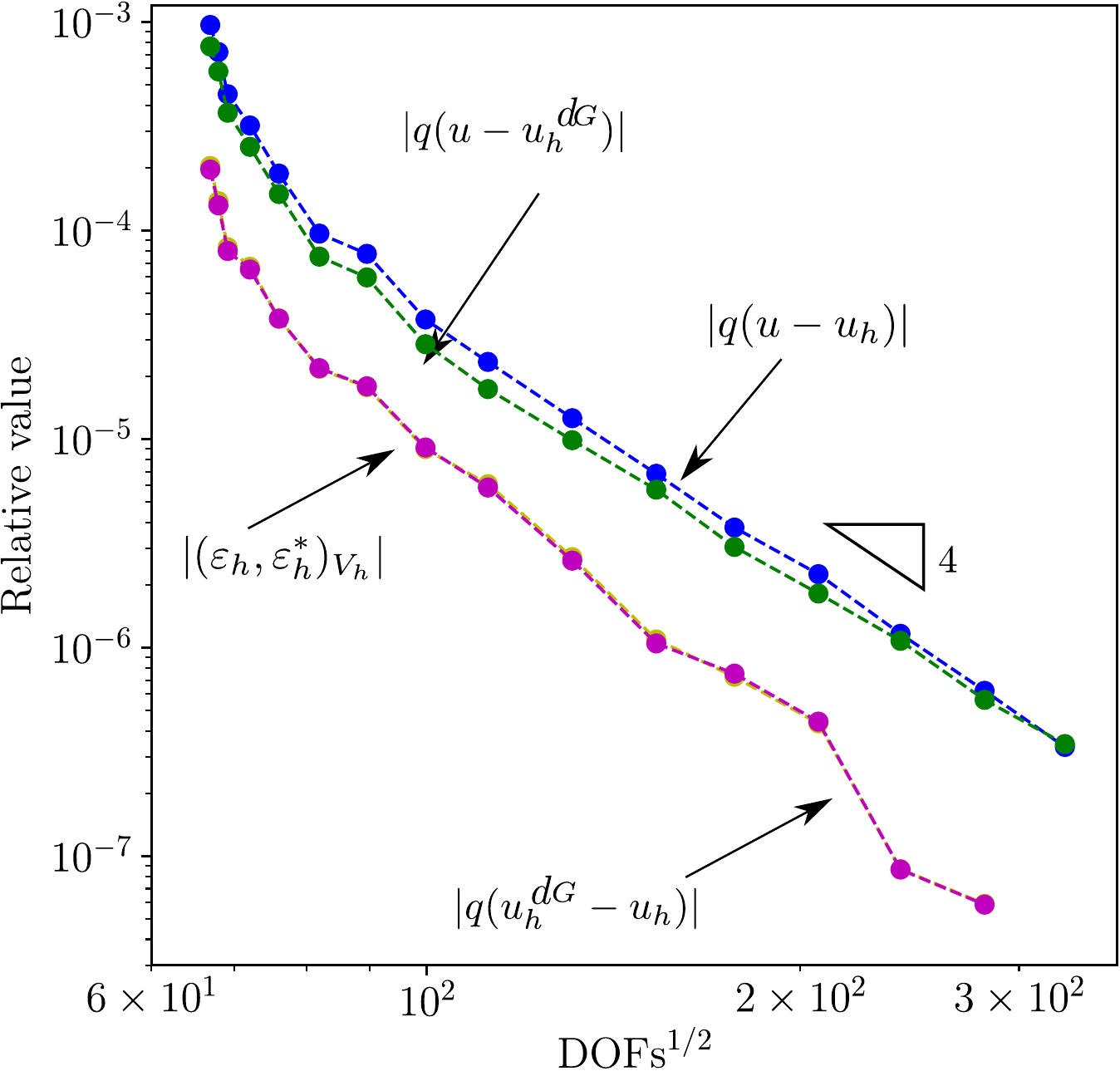}
        \caption{SIP with estimator $E\big(\varepsilon_h\, , \, \varepsilon_h^\ast\big)$}
        \label{fig:cross_B_sip_22}
    \end{subfigure} 
    \hspace{0.2cm}
    \begin{subfigure}[b]{0.45\textwidth}
        \includegraphics[width=\textwidth]{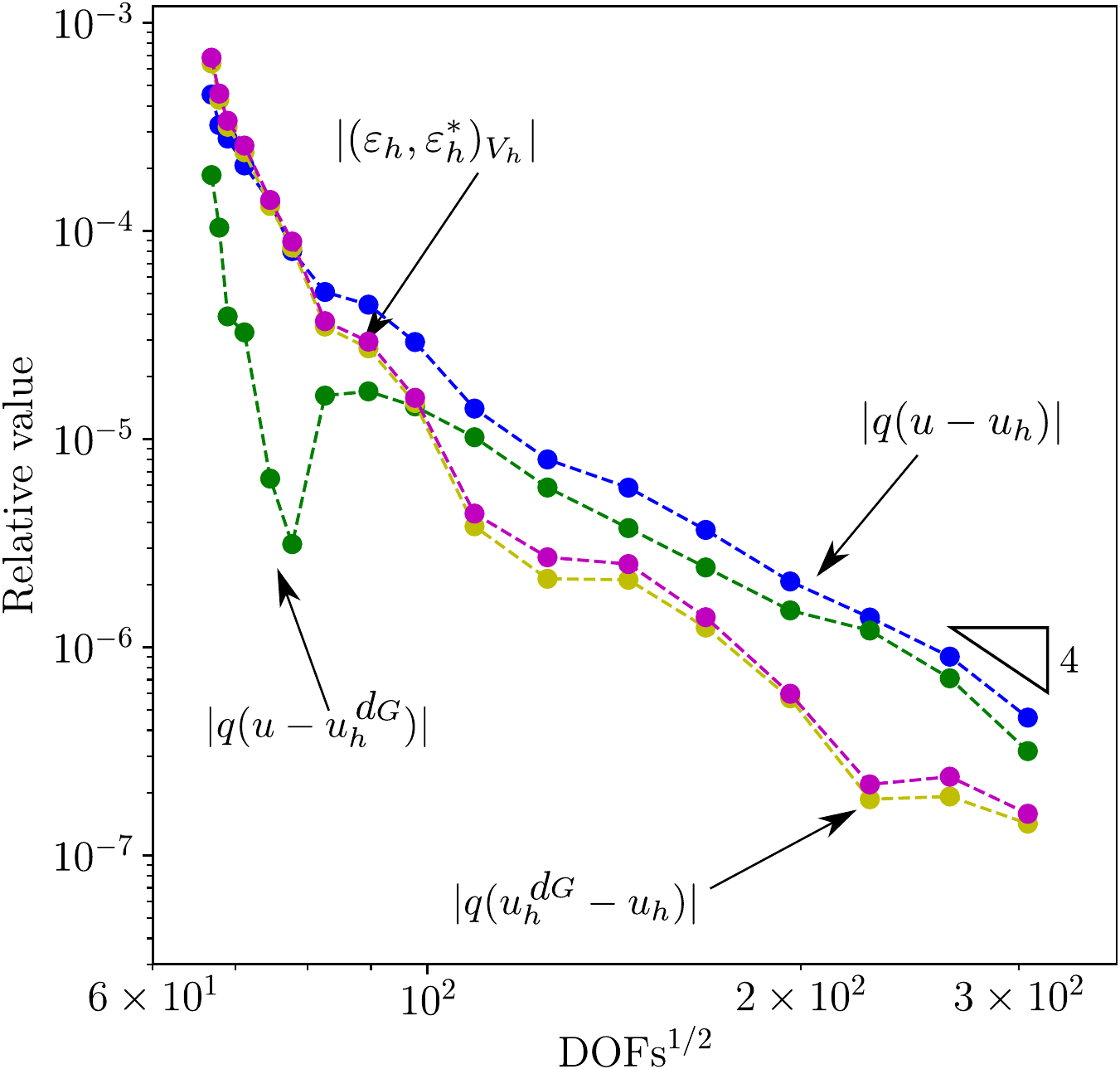}
        \caption{NIP with estimator $E\big(\varepsilon_h\, , \, \varepsilon_h^\ast\big)$}
        \label{fig:cross_B_nip_22}
    \end{subfigure}

    \caption{Several relative value comparisons using the Symmetric Interior Penalty (SIP) and Nonsymmetric Interior Penalty (NIP) schemes for $p=2$ and $\Delta_p = 0$.}
    \label{fig:cross_comp_22}
\end{figure}

\begin{figure}[ht!]
    \centering
    \begin{subfigure}[b]{0.45\textwidth}
    \includegraphics[width=\textwidth]{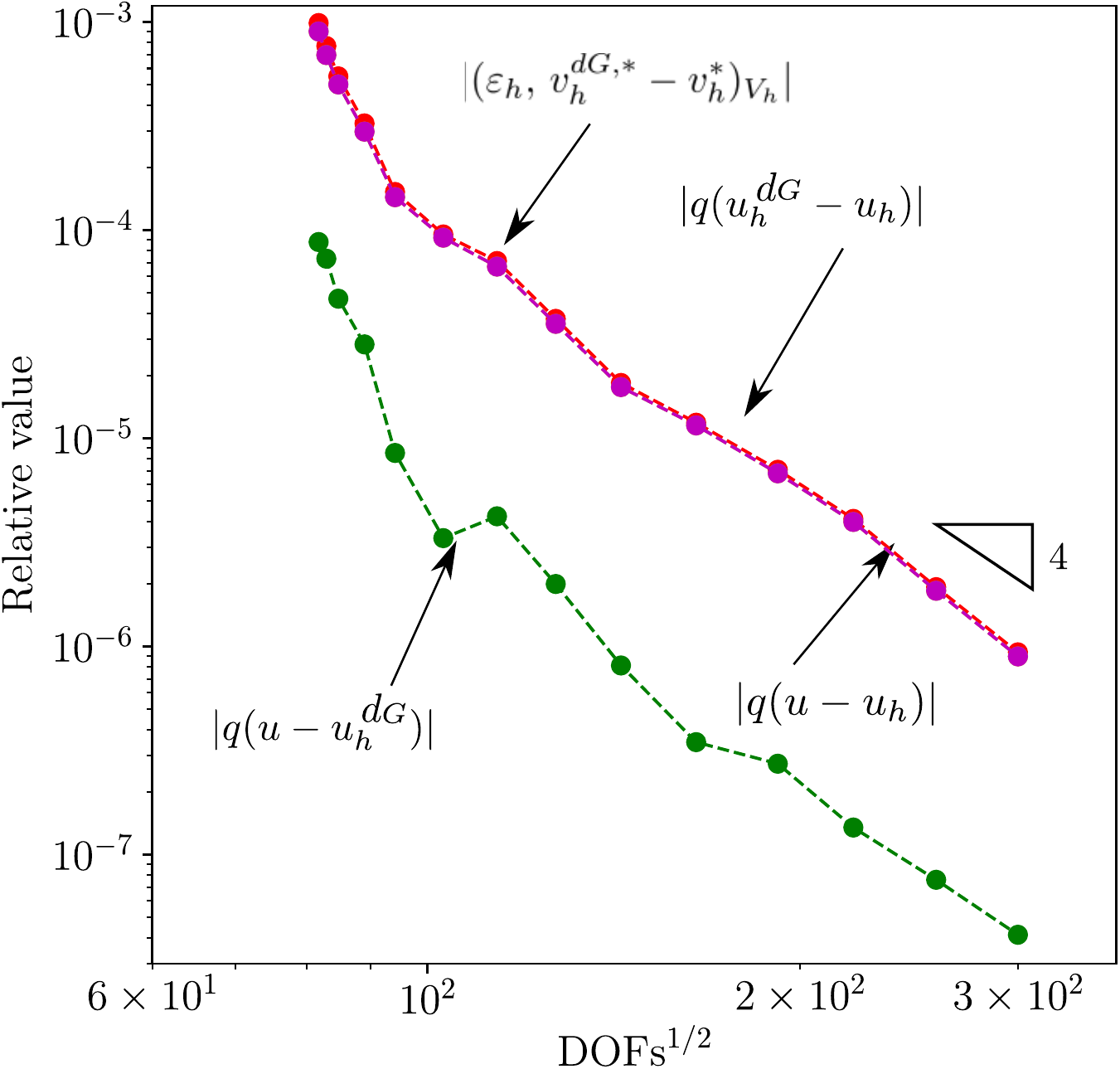}
        \caption{SIP with estimator $E\big(\varepsilon_h\, , \, \vdg-v_h^\ast\big)$}
        \label{fig:cross_A_sip_23}
    \end{subfigure} 
    \hspace{0.2cm}
    \begin{subfigure}[b]{0.45\textwidth}
        \includegraphics[width=\textwidth]{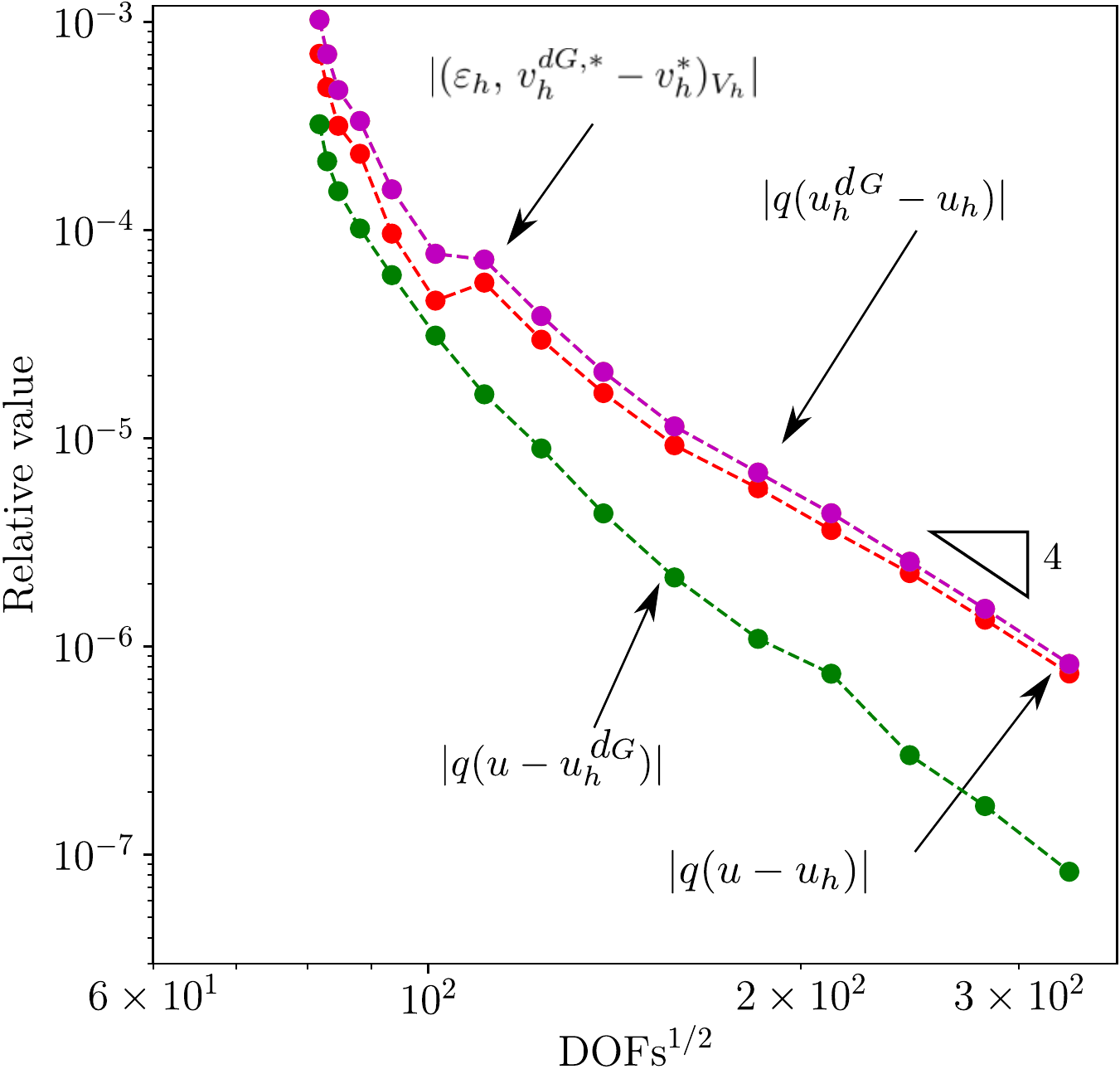}
        \caption{NIP with estimator $E\big(\varepsilon_h\, , \, \vdg-v_h^\ast\big)$}
        \label{fig:cross_A_nip_23}
    \end{subfigure}
    \begin{subfigure}[b]{0.45\textwidth}
    \includegraphics[width=\textwidth]{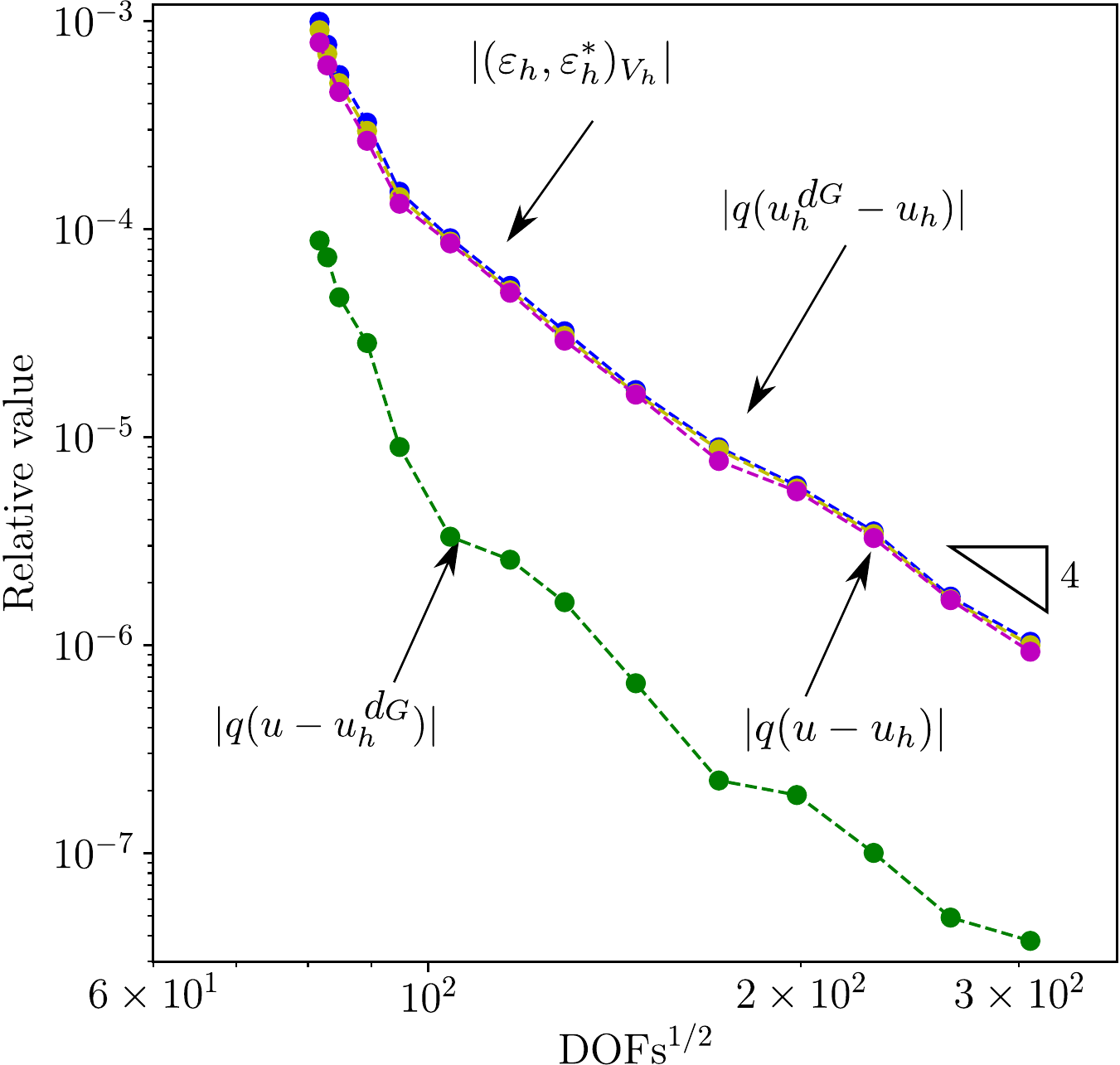}
        \caption{SIP with estimator $E\big(\varepsilon_h\, , \, \varepsilon_h^\ast\big)$}
        \label{fig:cross_B_sip_23}
    \end{subfigure} 
    \hspace{0.2cm}
    \begin{subfigure}[b]{0.45\textwidth}
        \includegraphics[width=\textwidth]{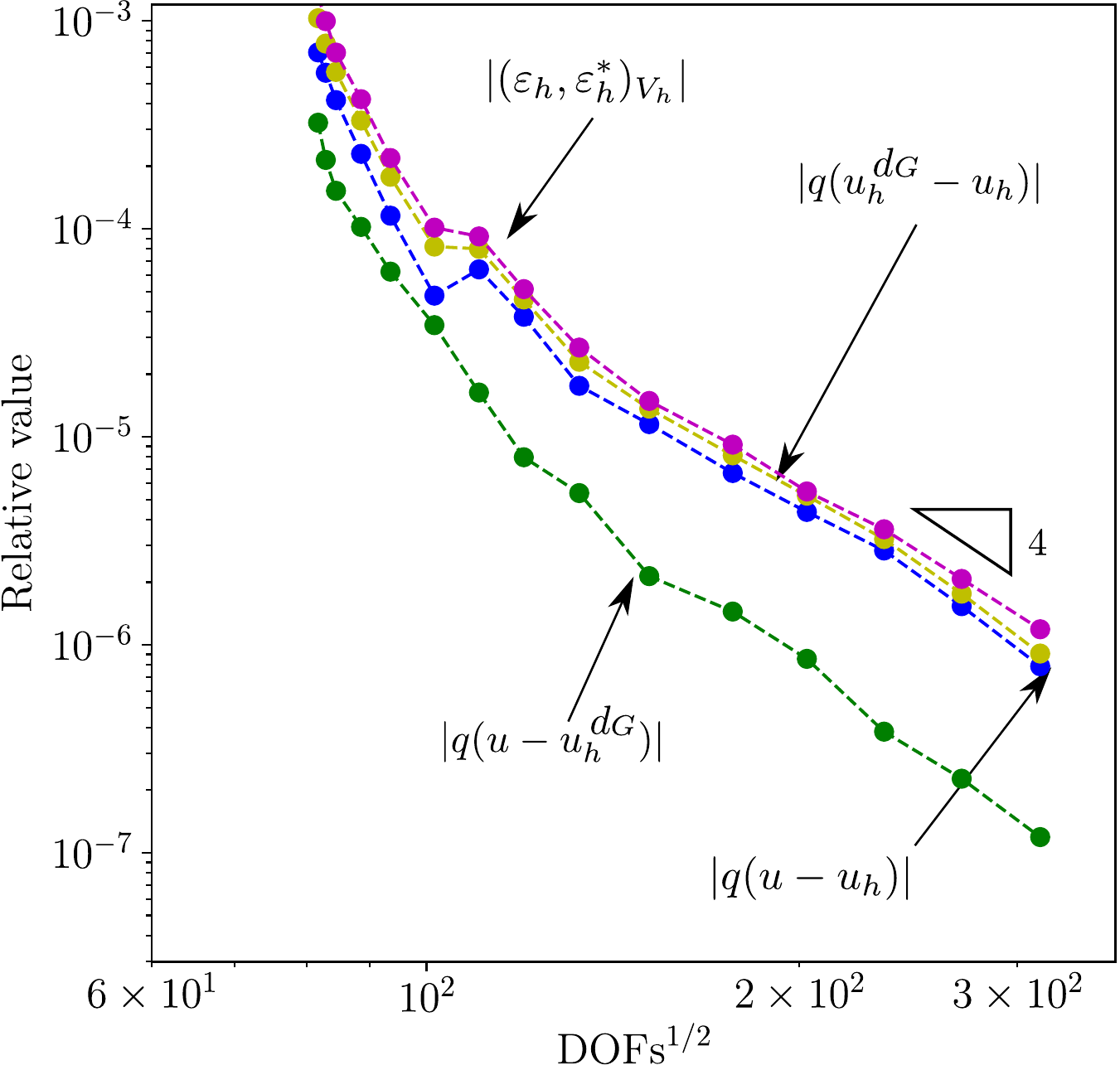}
        \caption{NIP with estimator $E\big(\varepsilon_h\, , \, \varepsilon_h^\ast\big)$}
        \label{fig:cross_B_nip_23}
    \end{subfigure}

    \caption{Several relative value comparisons using the Symmetric Interior Penalty (SIP) and Nonsymmetric Interior Penalty (NIP) schemes for $p=2$ and $\Delta_p = 1$.}
    \label{fig:cross_comp_23}
\end{figure}
We start by comparing our strategy with the state-of-the-art GoA strategies. For this, we consider the diffusion problem:  
\begin{align}\label{eq:num_diff}
\left\{\begin{array}{l}
\text{Find } u \text{ such that:} \smallskip \\
\begin{array}{rll}
-\Delta u  \hspace{-0.2cm} &= \, f, & \text{ in } \Omega, \smallskip\\
u \hspace{-0.2cm} &= \, 0, & \text{ on } \partial \Omega,
\end{array}
\end{array}\right.
\end{align}
defined in the cross-shaped domain $\Omega = (-2,2) \times (-1,1) \cup (-1,1) \times (-2,2)$, with $f=1$ as forcing term. Equations~\eqref{eq:linear_dar} and~\eqref{eq:bilinear_dar} express the corresponding dG bilinear and linear forms, respectively, while~\eqref{eq:vh_norm} defines the discrete inner product with $\kappa = 1$, ${\bf b} = {\bf 0}$, and $\gamma = 0$. Our description allows for two possible formulations: the Symmetric Interior Penalty (SIP) formulation ($\epsilon = -1$) being adjoint consistent (see Section~\ref{sec:adjoint_dg}), and the Nonsymmetric Interior Penalty (NIP) formulation ($\epsilon = 1$), which is not adjoint consistent. We compare our results with~\cite{ dolejvsi2017goal}, considering the QoI defined over the subdomain $\Omega_0 = (1.2,1.4) \times (0.2,0.4)$ (see Fig.~\ref{fig:cross_shape_data_set}), and the same reference value $q(u) = 0.407617863684$ computed previously in~\cite{  ainsworth2012guaranteed}.
Figure~\ref{fig:cross_goal_sip} shows the relative error for three estimators considering the SIP formulation. We consider two polynomial orders for the trial space (namely, $p=1,2$), with a test space of the same polynomial order (i.e., $\Delta_p = 0$). We plot the evolution of the relative error $|q(u-u_h)|/|q(u)|$ versus the square root of the total number of degrees of freedom in the system (DOFs) we use to solve the saddle-point problem~\eqref{eq:mixed_direct} (i.e.~$\dim(V_h) + \dim(U_h)$). In Figure~\ref{fig:cross_goal_nip}, we repeat these plots considering the NIP formulation.  These figures show up to fourteen levels of refinement. As our theoretical analysis predicts, the convergence rates of the GoA error estimates $E\big(\varepsilon_h\, , \, \vdg-v_h^\ast\big)$ and $E\big(\varepsilon_h\, , \, \varepsilon_h^\ast\big)$ are significantly better than those the energy norm delivers. Moreover, both GoA error estimates we propose deliver optimal convergence rates for both reference dG formulations when $p=1$ (see Equation~\eqref{eq:optimal_rate}).  Instead, for $p=2$, we observe that the most efficient estimation, which is also optimal and in line with the results in~\cite{ dolejvsi2017goal}, is obtained with the SIP formulation together with the GoA estimator $E\big(\varepsilon_h\, , \, \varepsilon_h^\ast\big)$.
To validate the analysis we present in Section~\ref{sec:goal_estimator}, in Figure~\ref{fig:cross_comp_22} we compare several of the involved discrete quantities (scaled by $|q(u)|$).  Figures~\ref{fig:cross_A_sip_22} and~\ref{fig:cross_A_nip_22} show a comparison for the SIP and NIP formulations, respectively, considering the estimator $E\big(\varepsilon_h\, , \, \vdg-v_h^\ast\big)$. Figures~\ref{fig:cross_B_sip_22} and~\ref{fig:cross_B_nip_22} show a comparison for the SIP and NIP formulations, respectively,  considering the estimator $E\big(\varepsilon_h\, , \, \varepsilon_h^\ast\big)$. In the figures, the quantity $|q(u-\udg)|$ remains below the curve $|q(u-u_h)|$, implying that the Assumption~\ref{as:saturation_GO} is satisfied.  Moreover, the quantities $|q(u-u_h)|$ and $\big|q(\udg-u_h)\big|=\big|\big(\varepsilon_h\, , \, \vdg-v_h^\ast\big)_{V_h} \big|$, or $\big|\big(\varepsilon_h\, , \, \varepsilon_h^\ast\big)_{V_h}\big|$ when it corresponds, have the same rate of convergence. Figures also show that the respective estimation is sharper if the value $\big|q(u-\udg)\big|$ becomes smaller, since this reduces the constant in Assumption~\ref{as:saturation_GO}.
There exist several alternatives to improve the GoA estimation if required, for instance, by increasing the polynomial order of the test space. In Figure~\ref{fig:cross_comp_23} we repeat the plots of Figure~\ref{fig:cross_comp_22}, considering a test space of degree $p_t=3$. Even if this implies to solve a larger system, from figures we can appreciate a clear improvement in the estimation, while conserving the expected convergence rate in terms of the DOFs. Moreover, the robustness of the error estimation with respect to the discrete formulation shows that our GoA algorithm is independent of the adjoint consistency assumption.
\subsection{Advection-reaction problem}

\begin{figure}[ht!]
    \centering
    \begin{subfigure}[b]{0.45\textwidth}
    \includegraphics[width=\textwidth]{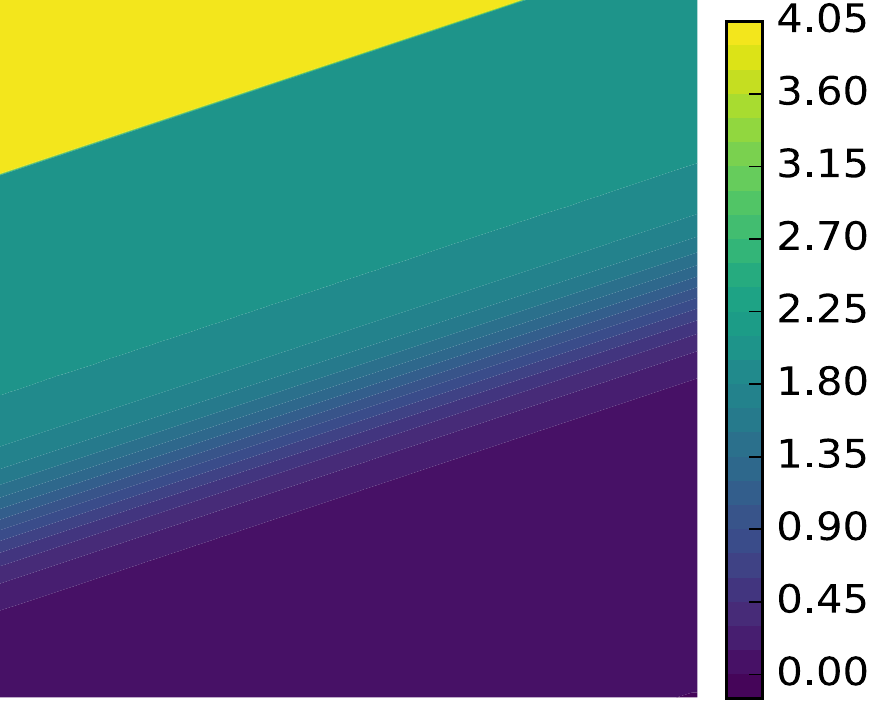}
     \caption{Reference solution}
        \label{fig:ad_re_ref_sol}
    \end{subfigure}
    \hspace{0.2cm}
    \begin{subfigure}[b]{0.36\textwidth}
        \includegraphics[width=\textwidth]{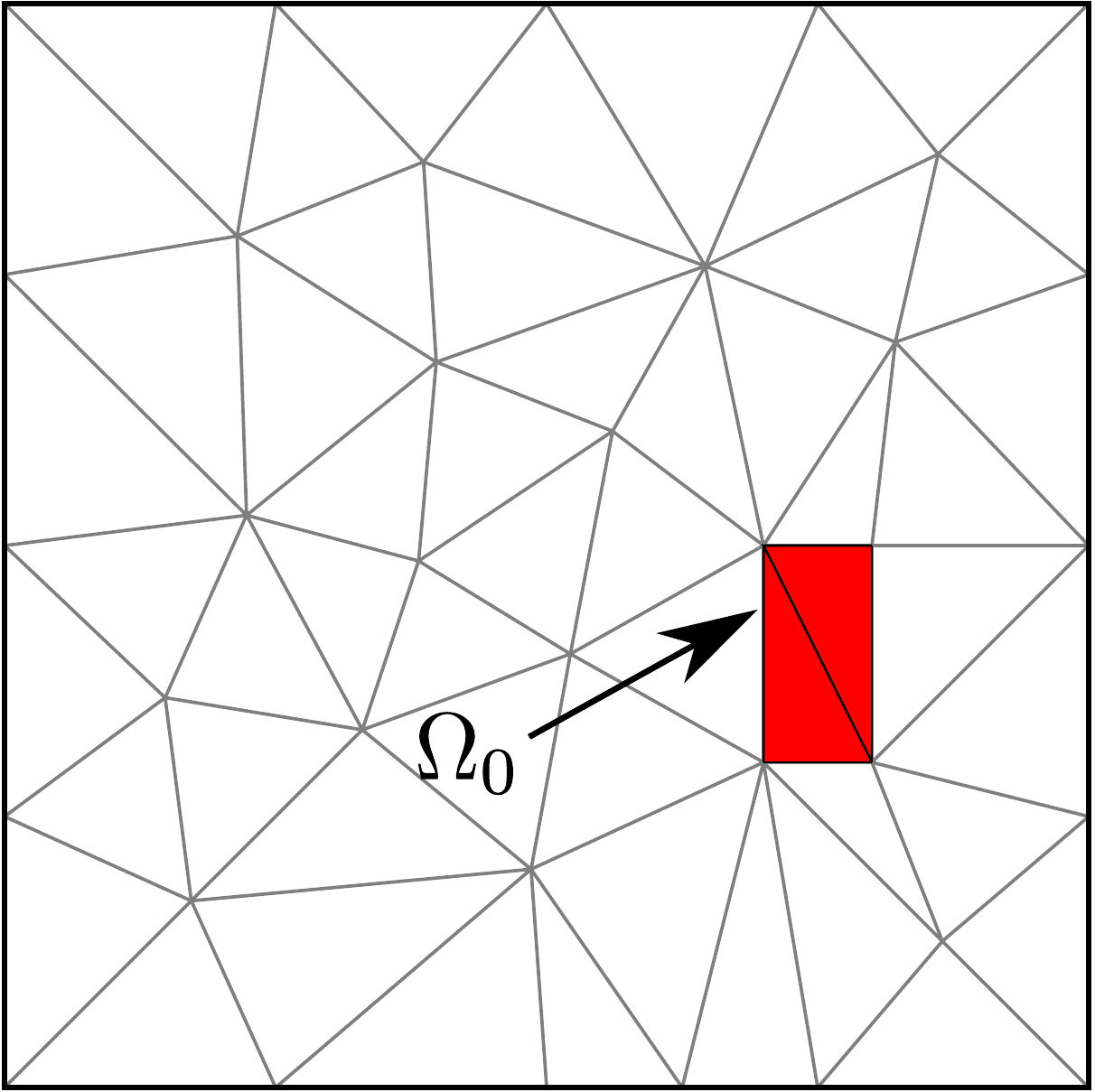}
         \caption{Initial mesh}
        \label{fig:ad_re_ini_mesh}
    \end{subfigure}
    \caption{Reference solution and initial mesh}
    \label{fig:ad_re_data_set}
\end{figure}

As a second example, we consider the advection-reaction problem~\eqref{eq:advection_reaction} in the unit square $\Omega = (0 \, , \, 1)^2 \subset \R^2$, with a constant velocity field $\Vb = (3 \, , \, 1)^T$. For a given $\gamma\geq0$, the source term is $f = \gamma \, u$ in $\Omega$, and an inflow boundary datum $g^-=u|_{\Gamma^-}$, where $\Gamma^-=\{(0,y),y\in(0,1)\}\cup\{(x,0),x\in(0,1)\}$, and the exact solution $u$ is (see Figure~\ref{fig:ad_re_ref_sol}):
\begin{equation}\label{eq:adv_anal}
 u(x_1,x_2) = 2+\tanh\big(10 \big(x_2-\dfrac{x_1}{3} - \dfrac{1}{4}\big) \big)+\tanh\big(1000 \big(x_2-\dfrac{x_1}{3} - \dfrac{3}{4}\big) \big).
\end{equation}
Since $\kappa = 0$, the dG bilinear and linear forms correspond to equations~\eqref{eq:linear_dar} and~\eqref{eq:bilinear_dar}, respectively, while the discrete inner product corresponds to equation~\eqref{eq:vh_norm}.

The nature of the analytical solution $u$ implies that an adaptive algorithm based on the energy norm refines in a neighbourhood of the characteristic line starting from the inflow boundary at $y=3/4$ (cf.~\cite{ rojas2019adaptive}). We analyze a pure advection case ($\gamma = 0$),  and a reaction-dominant case ($\gamma = 1000$). We set $\Omega_0 = (0.7,0.8)\times(0.3,0.5)$ as the subdomain that defines the QoI, and we consider the $\Omega_0$-conforming mesh of Figure~\ref{fig:ad_re_ini_mesh} as our starting point for the adaptive procedure.

\begin{figure}[h]
	\centering
	\begin{subfigure}[b]{0.48\textwidth}
		\includegraphics[width=\textwidth]{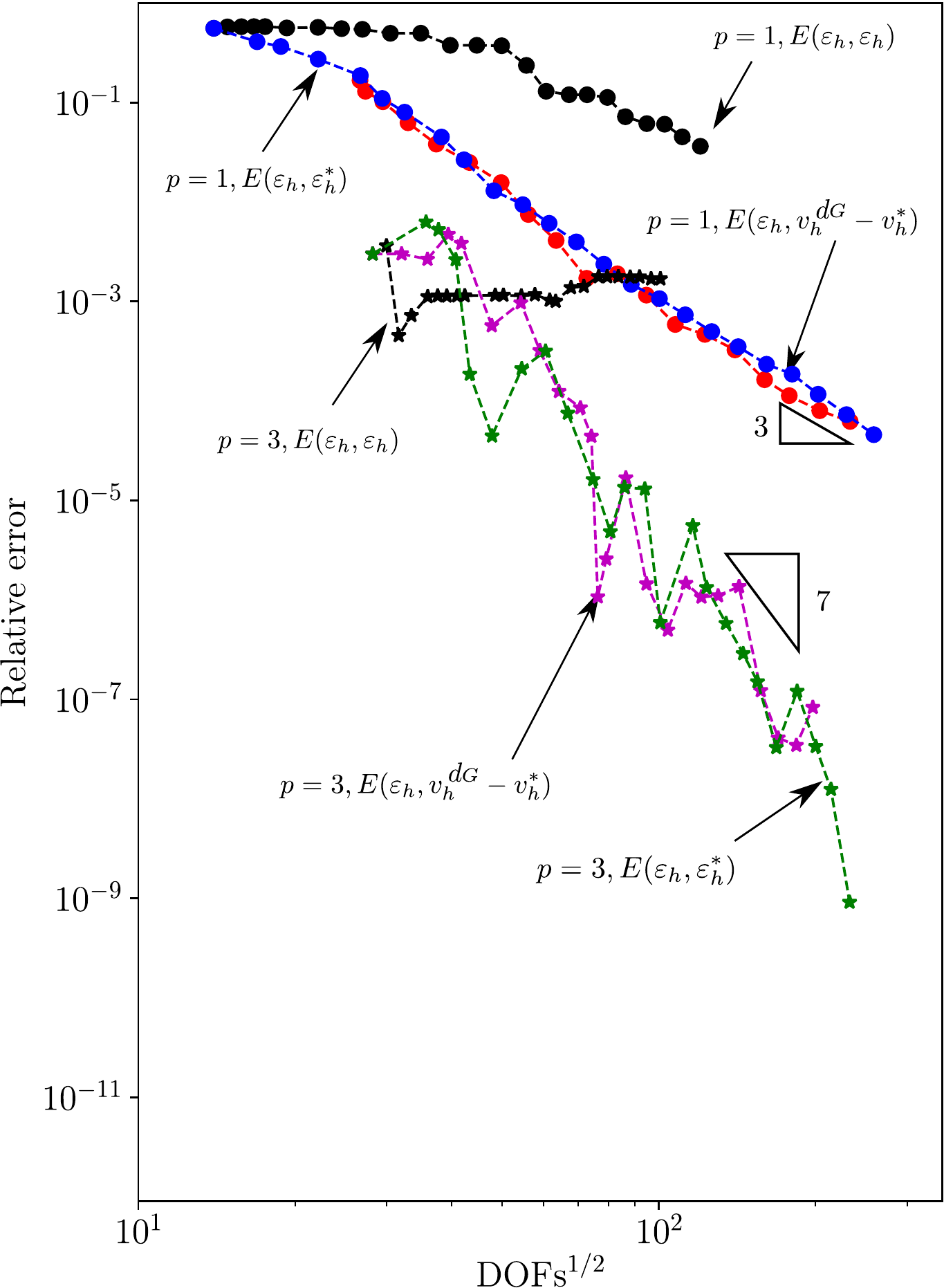}

		\caption{Pure advection ($\gamma =0$)}
		\label{fig:ad_re_all_0}
	\end{subfigure}
	\begin{subfigure}[b]{0.48\textwidth}
		\includegraphics[width=\textwidth]{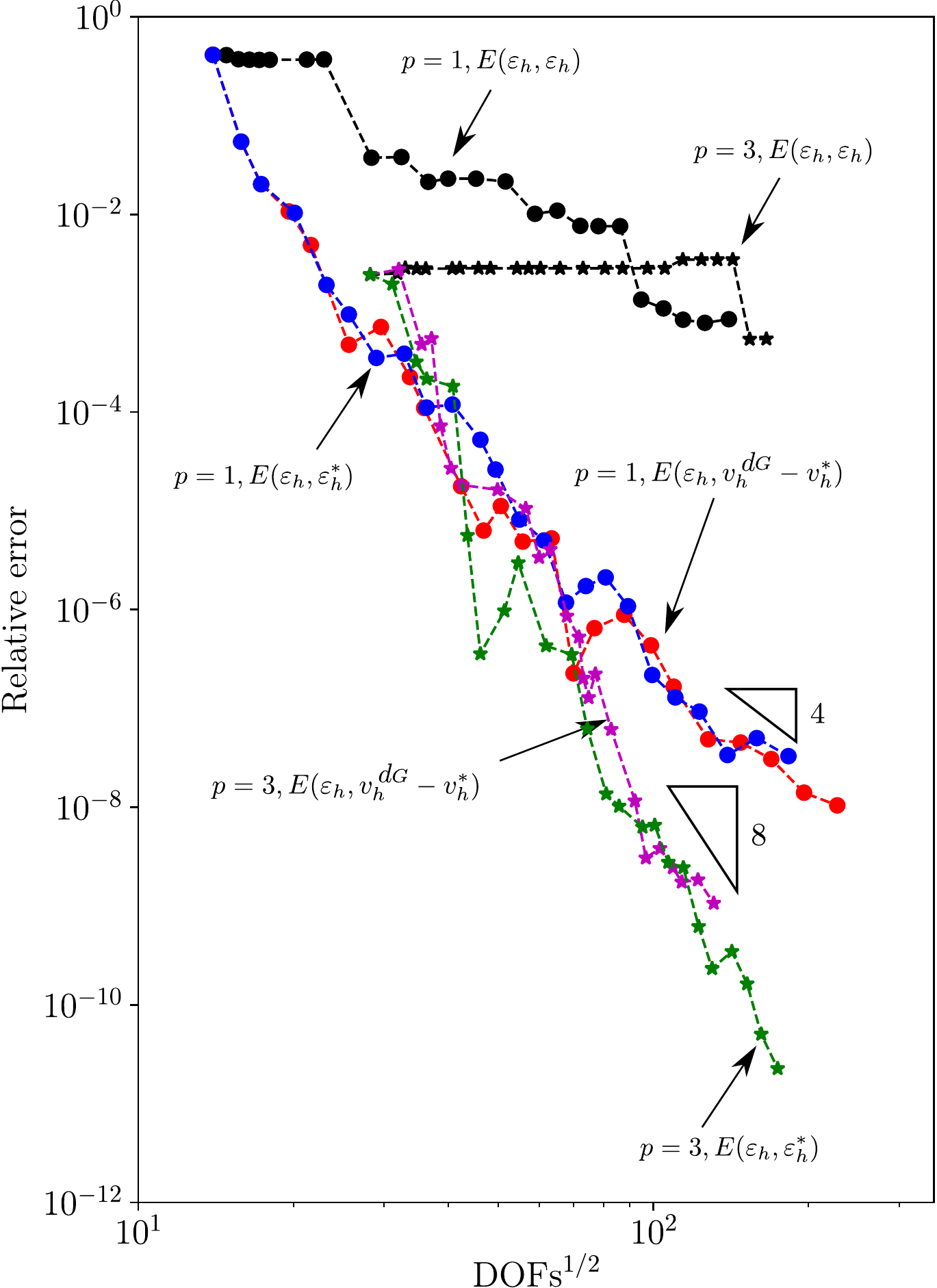}

		\caption{Reaction-dominant ($\gamma = 1000$) }
		\label{fig:ad_re_all_100}
	\end{subfigure}

  \caption{Relative error ($|q(u-u_h)|/|q(u)|$) in quantity of
    interest (QoI) using upwinded (UP) scheme for (a) pure advection
    and (b) reaction-dominant problems.  (See
    Section~\ref{sec:upper_estimates} for a detailed discussion.)}
  \label{fig:ad_re_relative_all}
\end{figure}

We consider two polynomial orders ($p=1,3$) with test space of the same polynomial order (i.e., $\Delta_p = 0$), and we repeat the plots of previous section. Figure~\ref{fig:ad_re_all_0} shows the evolution of the relative error $|q(u-u_h)|/|q(u)|$ versus the square root of the total number of degrees of freedom in the system (DOFs). In Figure~\ref{fig:ad_re_all_100}, we repeat these plots for the reaction dominant case. These figures show up to eighteen levels of refinement. Again, both GoA error estimates $\big(\varepsilon_h\, , \, \varepsilon_h^\ast\big)_{V_h}$ and $\big(\varepsilon_h\, , \, \vdg-v_h^\ast\big)_{V_h}$ deliver similar results, which are significantly better than those the energy norm delivers.
\begin{figure}[h]
    \centering
    \begin{subfigure}[b]{0.48\textwidth}
        \includegraphics[width=\textwidth]{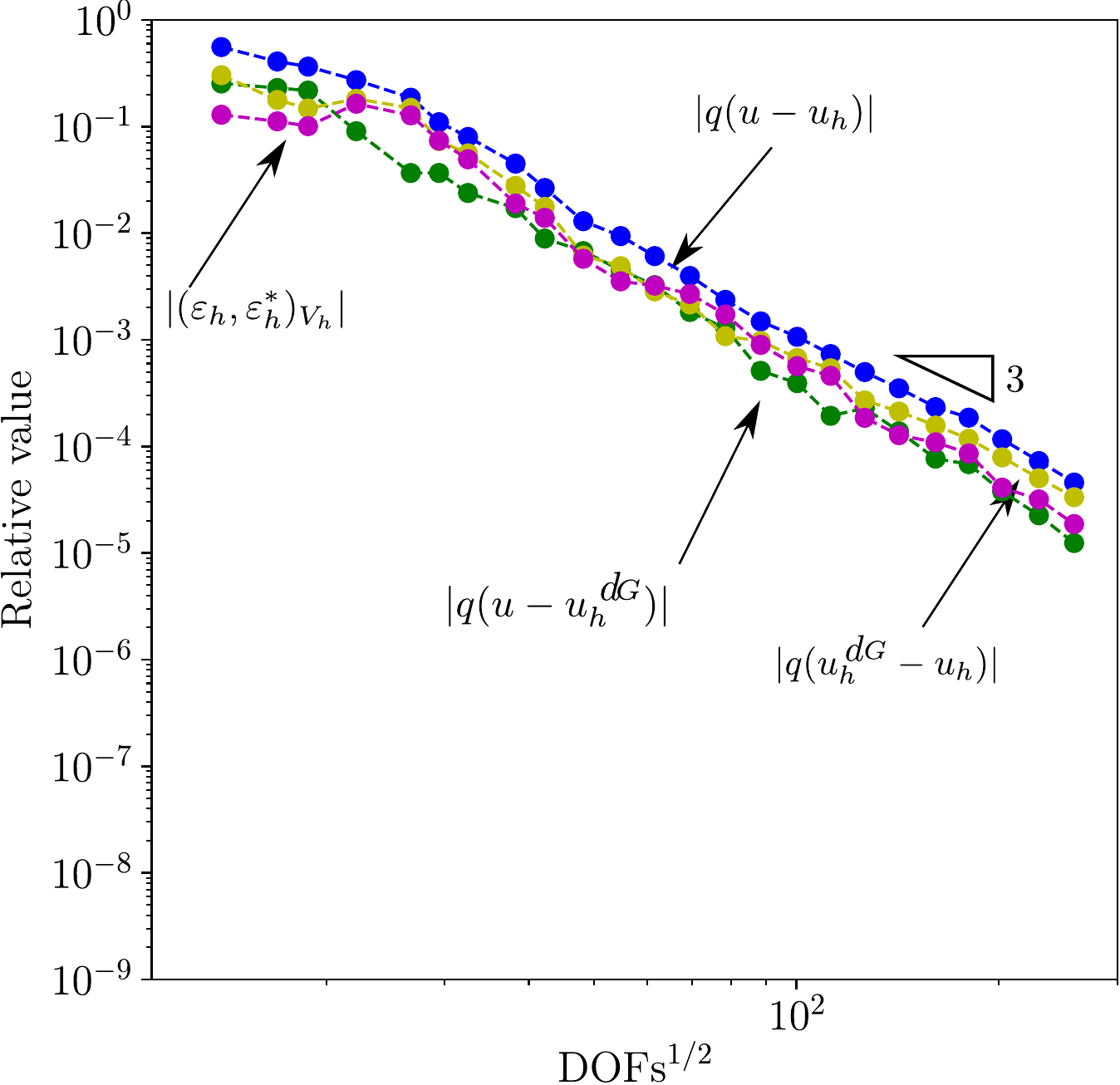}
        \caption{Pure advection ($\gamma = 0$)}
        \label{fig:ad_re_type_1_p1_0}
    \end{subfigure}
   \begin{subfigure}[b]{0.48\textwidth}
        \includegraphics[width=\textwidth]{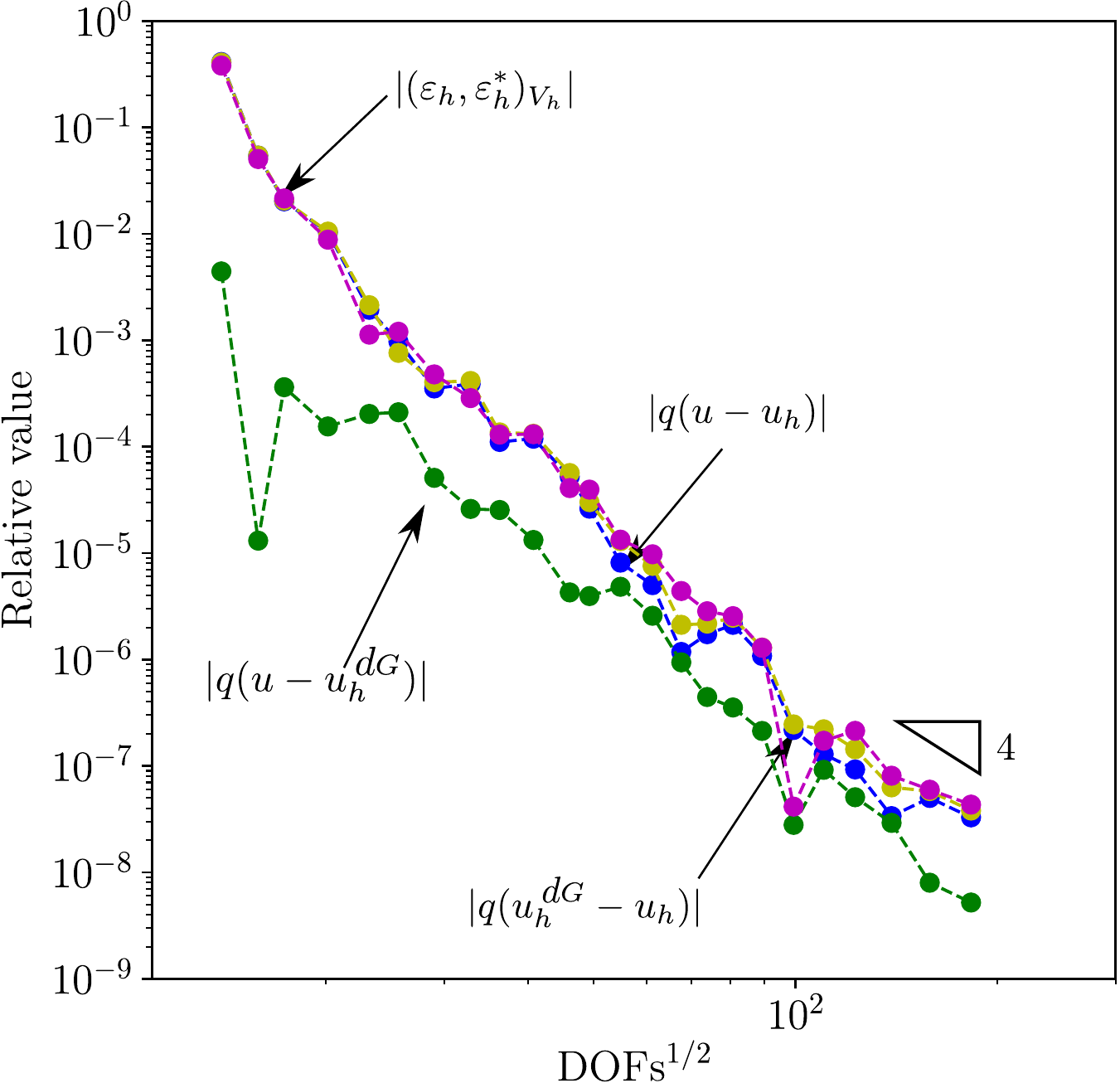}
        \caption{Reaction-dominant ($\gamma = 1000$)}
        \label{fig:ad_re_type_1_p1_100}
    \end{subfigure}
    \caption{Discrete relative error for upwinded (UP) scheme for
      pure advection with $p=1$ and $\Delta_p = 0$: goal-oriented error estimate $\big(\varepsilon_h\,,\, \varepsilon_h^\ast\big)_{V_h}$ (see
      Section~\ref{sec:upper_estimates} for definitions).}
    \label{fig:ad_re_type_1_0_all}
\end{figure}

\begin{figure}[h]
    \centering 
\begin{subfigure}[b]{0.48\textwidth}
        \includegraphics[width=\textwidth]{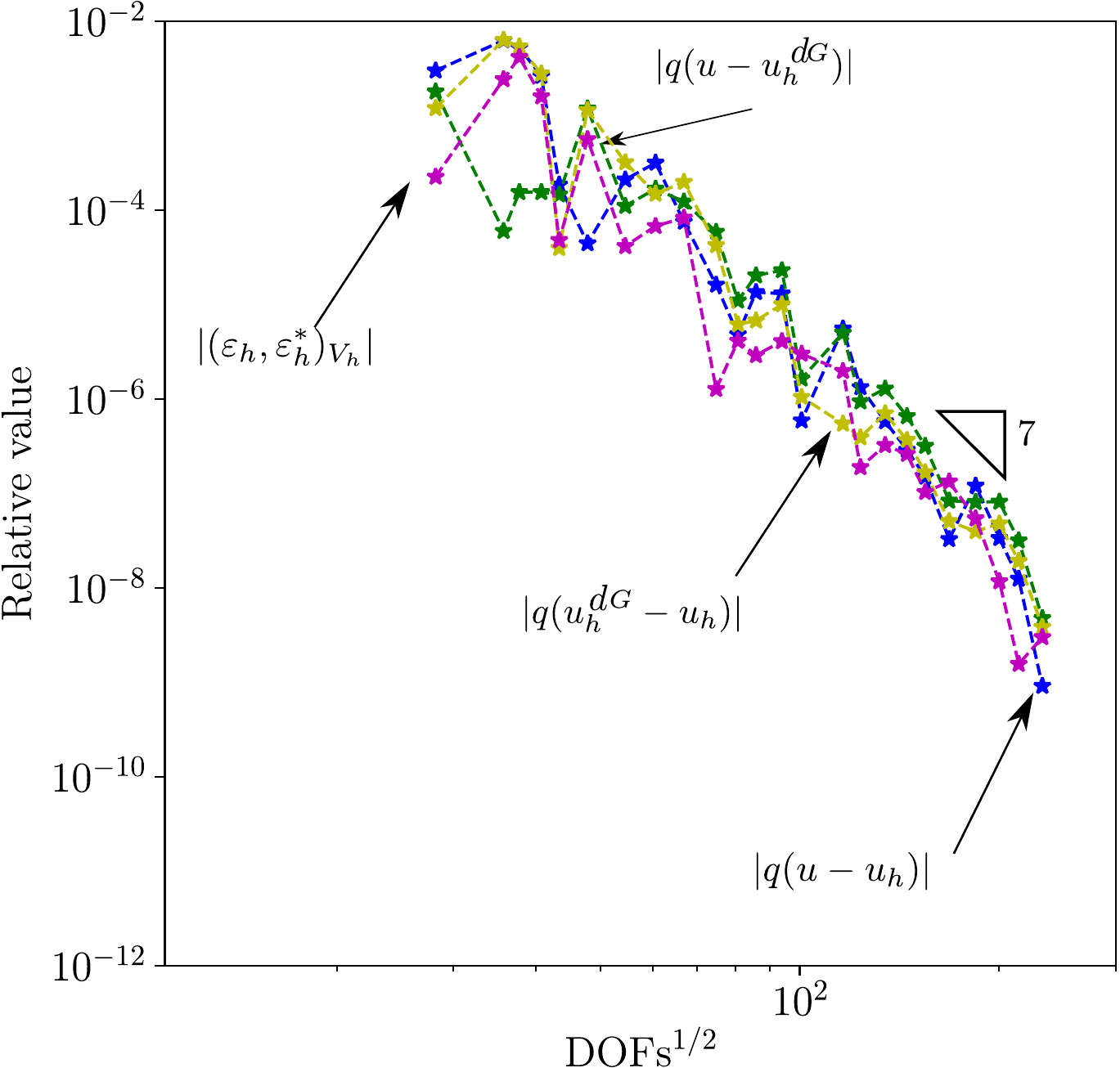}
        \caption{Pure advection ($\gamma = 0$)}
        \label{fig:ad_re_type_1_p3_0}
    \end{subfigure}
    \begin{subfigure}[b]{0.48\textwidth}
        \includegraphics[width=\textwidth]{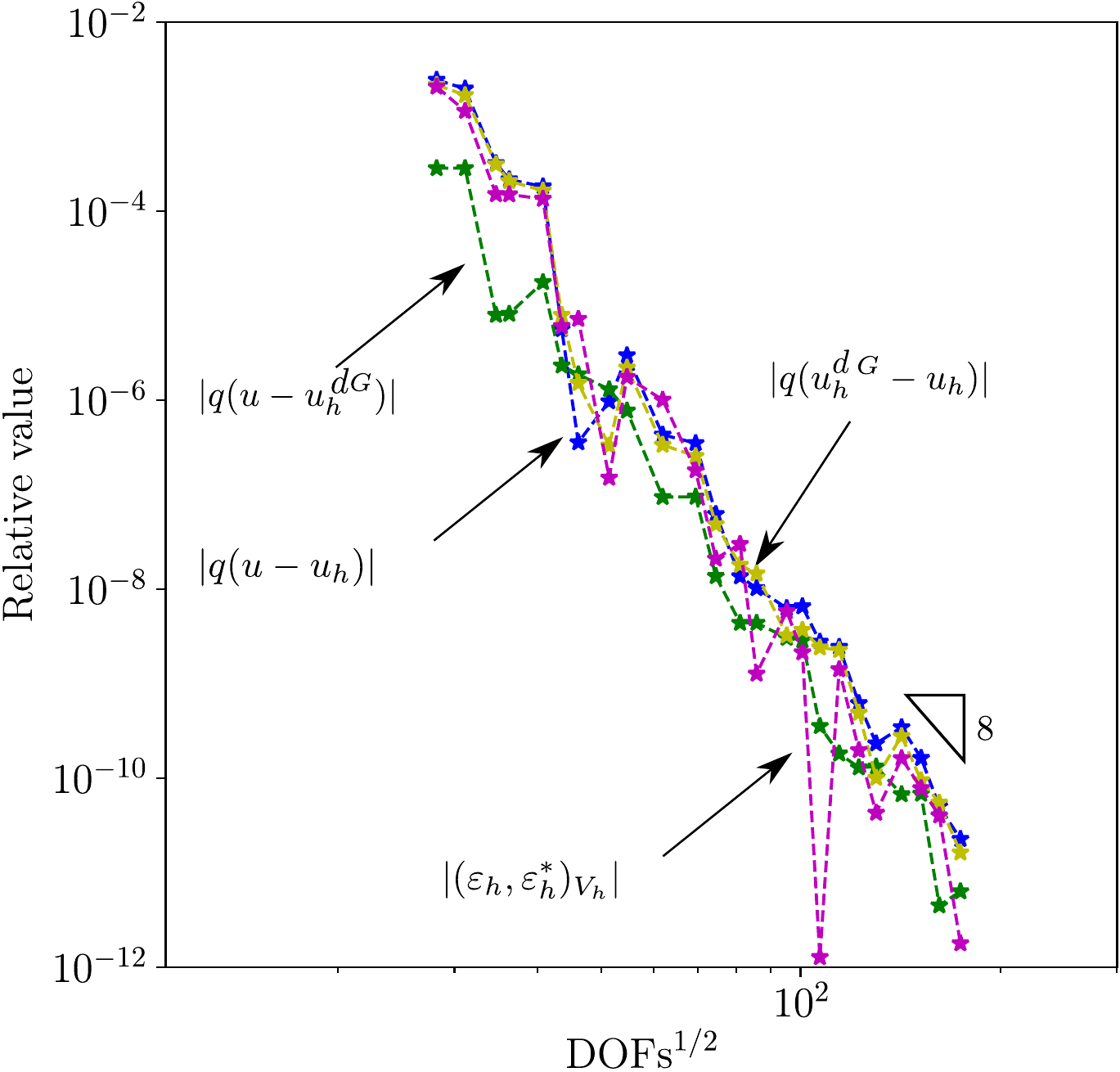}
        \caption{Reaction-dominant ($\gamma = 1000$)}
        \label{fig:ad_re_type_1_p3_100}
    \end{subfigure}
    \caption{Discrete relative error for upwinded (UP) scheme for
      pure advection with $p=3$ and $\Delta_p = 0$: goal-oriented error estimate $\big(\varepsilon_h\,,\, \varepsilon_h^\ast\big)_{V_h}$(see
      Section~\ref{sec:upper_estimates} for definitions).}
    \label{fig:ad_re_type_1_100_all}
  \end{figure}
  
Figure~\ref{fig:ad_re_type_1_0_all}  displays, for $p=1$, the evolution of the quantities $|q(u-u_h)|$, $|q(u-\udg)|$, $|q(\udg-u_h)|$, $|(\varepsilon_h\, , \, \varepsilon_h^\ast)_{V_h}|$ (scaled by $|q(u)|$) against the square root of the total number of DOFs. Figure~\ref{fig:ad_re_type_1_p1_0} corresponds to the pure advection case ($\gamma=0$) and Figure~\ref{fig:ad_re_type_1_p1_100} to the reaction dominant case ($\gamma=100$). Similarly, Figure~\ref{fig:ad_re_type_1_100_all} displays the same ratios for $p=3$. These figures are of particular relevance as they show that the GoA Assumptions~\ref{as:saturation_GO} and~\ref{as:saturation_GO_weak} are meaningful. Indeed, Figure~\ref{fig:ad_re_type_1_0_all} shows that the curve $|q(u-\udg)|$ remains below the curve $|q(u-u_h)|$ for both cases of $\gamma$. This bound implies that the goal-saturation Assumption~\ref{as:saturation_GO} is satisfied as in the pure diffusive example. While Figure~\ref{fig:ad_re_type_1_100_all} shows that this assumption is violated. Nevertheless, the weaker Assumption~\ref{as:saturation_GO_weak} is satisfied instead. Moreover, all the involved quantities share the same upper bound.\\ 
Finally, Figure~\ref{fig:ad_re_mesh_0_all} displays the resulting meshes at the eighteenth level of refinement using the energy error estimate (i.e., $\big(\varepsilon_h\, , \, \varepsilon_h\big)_{V_h}$) for pure advection~(a),  and reaction-dominant~(b) cases. Similarly, Figure~\ref{fig:ad_re_mesh_all} displays the meshes when using the estimate $\big(\varepsilon_h\, , \, \varepsilon_h^\ast\big)_{V_h}$. A comparison of these figures shows that the GoA estimates adjust the mesh refinement process according to the physical nature of the problem. In both cases, the energy error estimates attenuate the characteristic line that starts at $y=3/4$ and induces an interior layer.

\begin{figure}[h]
    \centering
    \begin{subfigure}[b]{0.4\textwidth}
        \includegraphics[width=\textwidth]{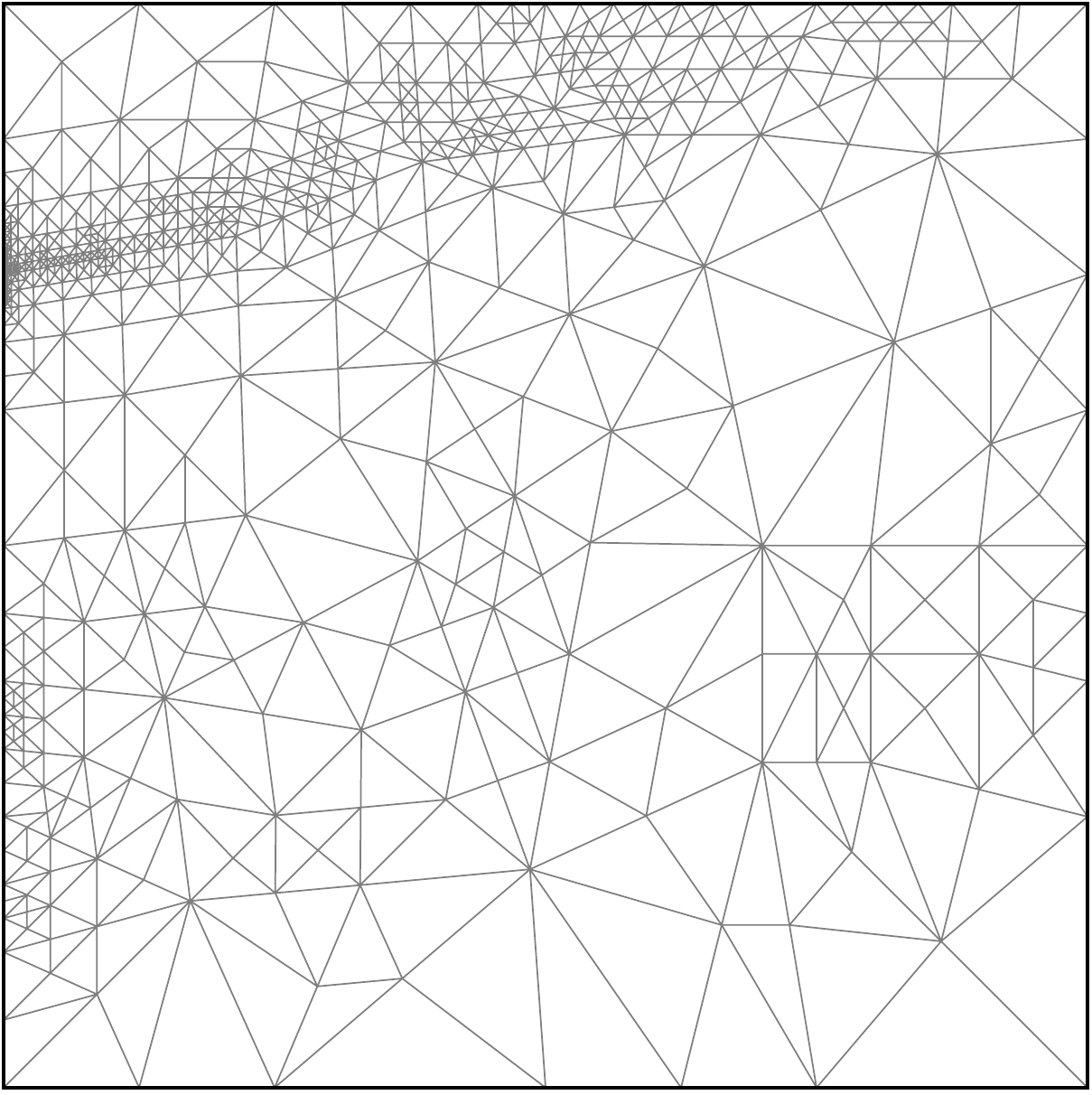}
        \caption{Pure advection $\gamma = 0$}
        \label{fig:ad_re_mesh_0_gamma_0}
    \end{subfigure}
   \hspace{0.5cm}
    \begin{subfigure}[b]{0.4\textwidth}
        \includegraphics[width=\textwidth]{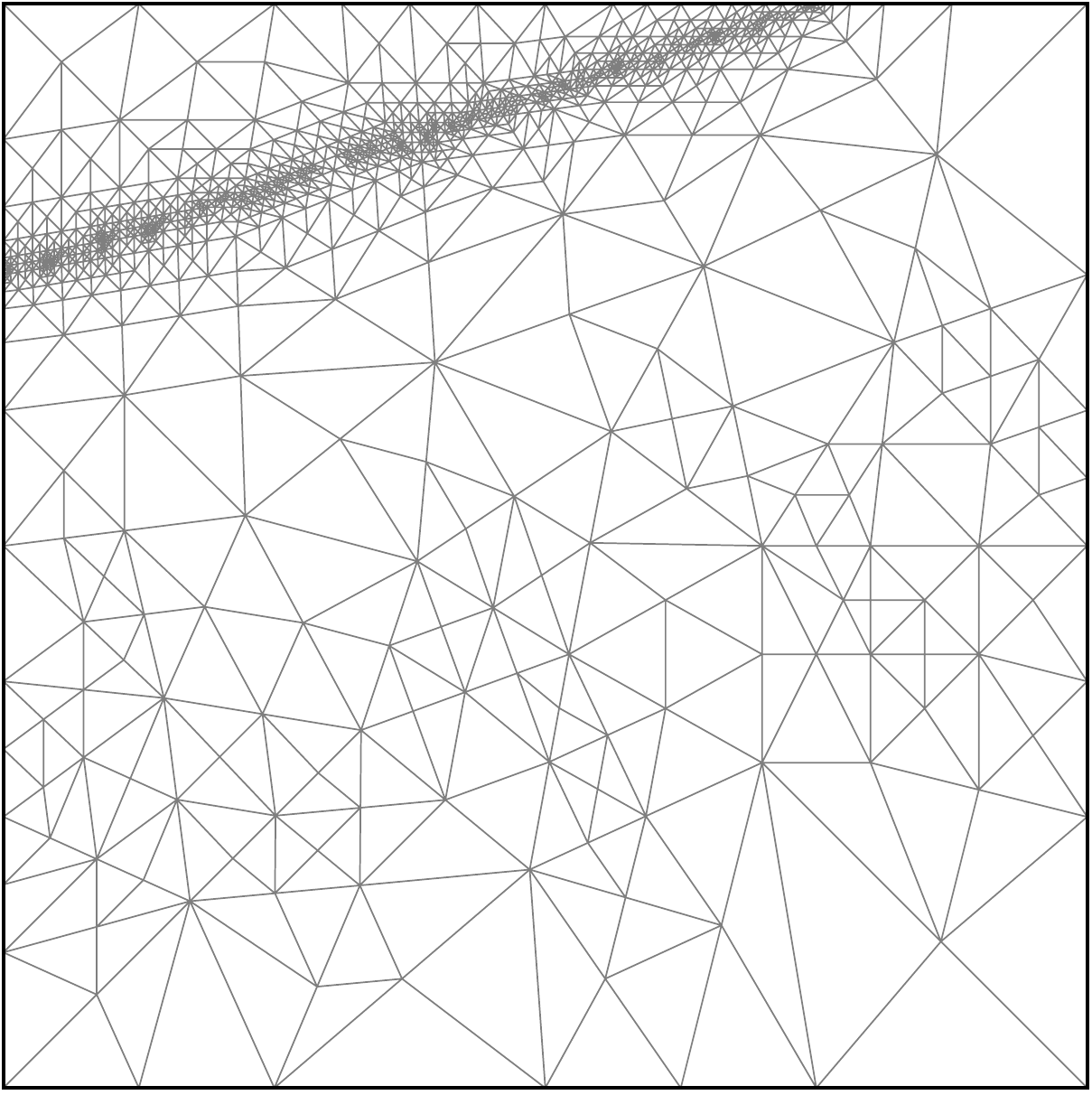}
        \caption{Reaction-dominant $\gamma = 1000$}
        \label{fig:ad_re_mesh_0_gamma_100}
    \end{subfigure}
    \caption{Resulting meshes after eighteenth levels of 
      refinements using the UP scheme with $p=1$ and the energy-based error
      estimate $\big(\varepsilon_h\,,\, \varepsilon_h\big)_{V_h}$ (see
      Section~\ref{sec:upper_estimates} for definitions). }
    \label{fig:ad_re_mesh_0_all}
\end{figure}
\begin{figure}[h]
    \centering
    \begin{subfigure}[b]{0.4\textwidth}
        \includegraphics[width=\textwidth]{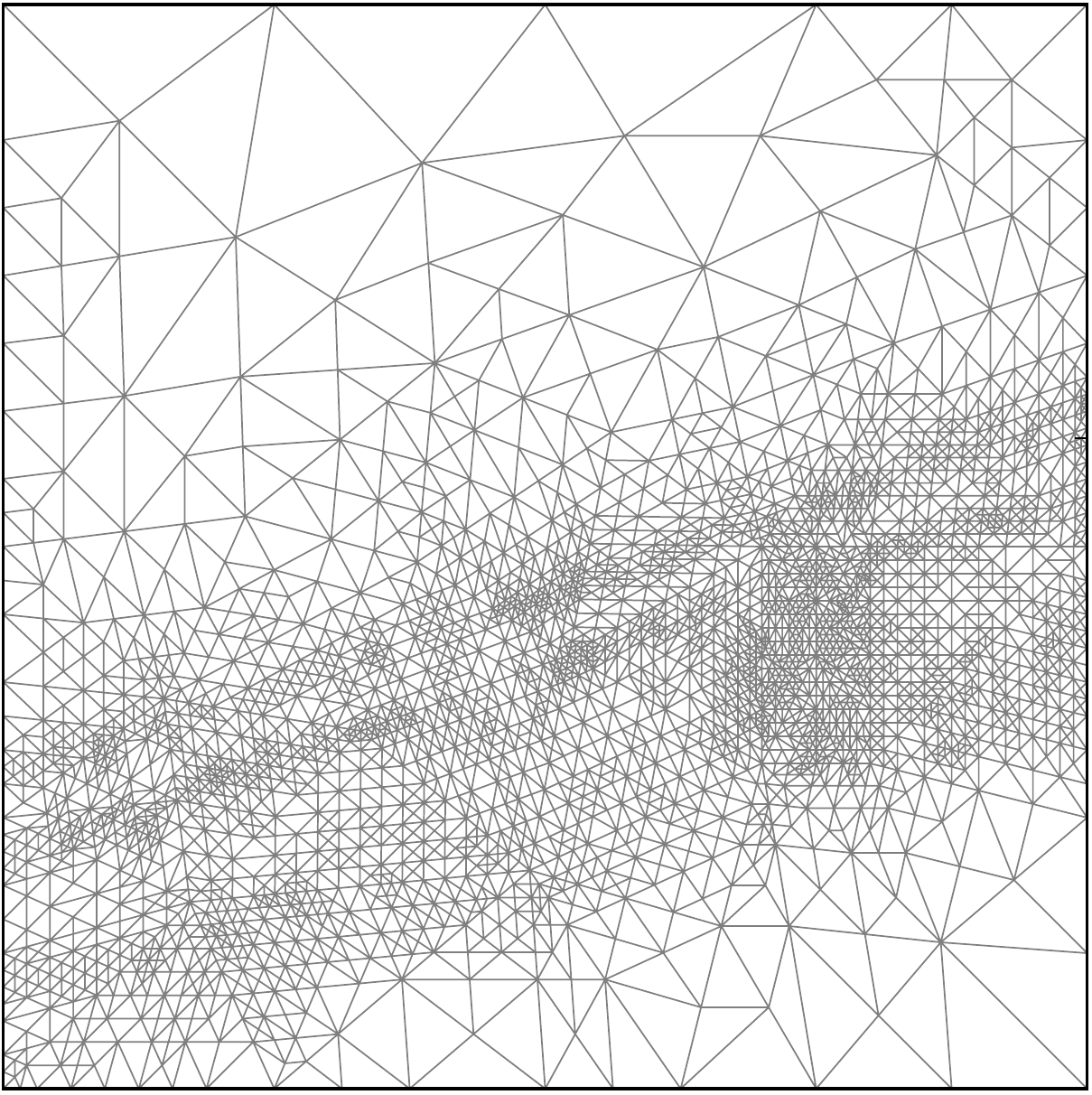}
        \caption{Pure advection ($\gamma = 0$)}
        \label{fig:ad_re_mesh_gamma_0}
    \end{subfigure}
\hspace{0.5cm}   
    \begin{subfigure}[b]{0.4\textwidth}
        \includegraphics[width=\textwidth]{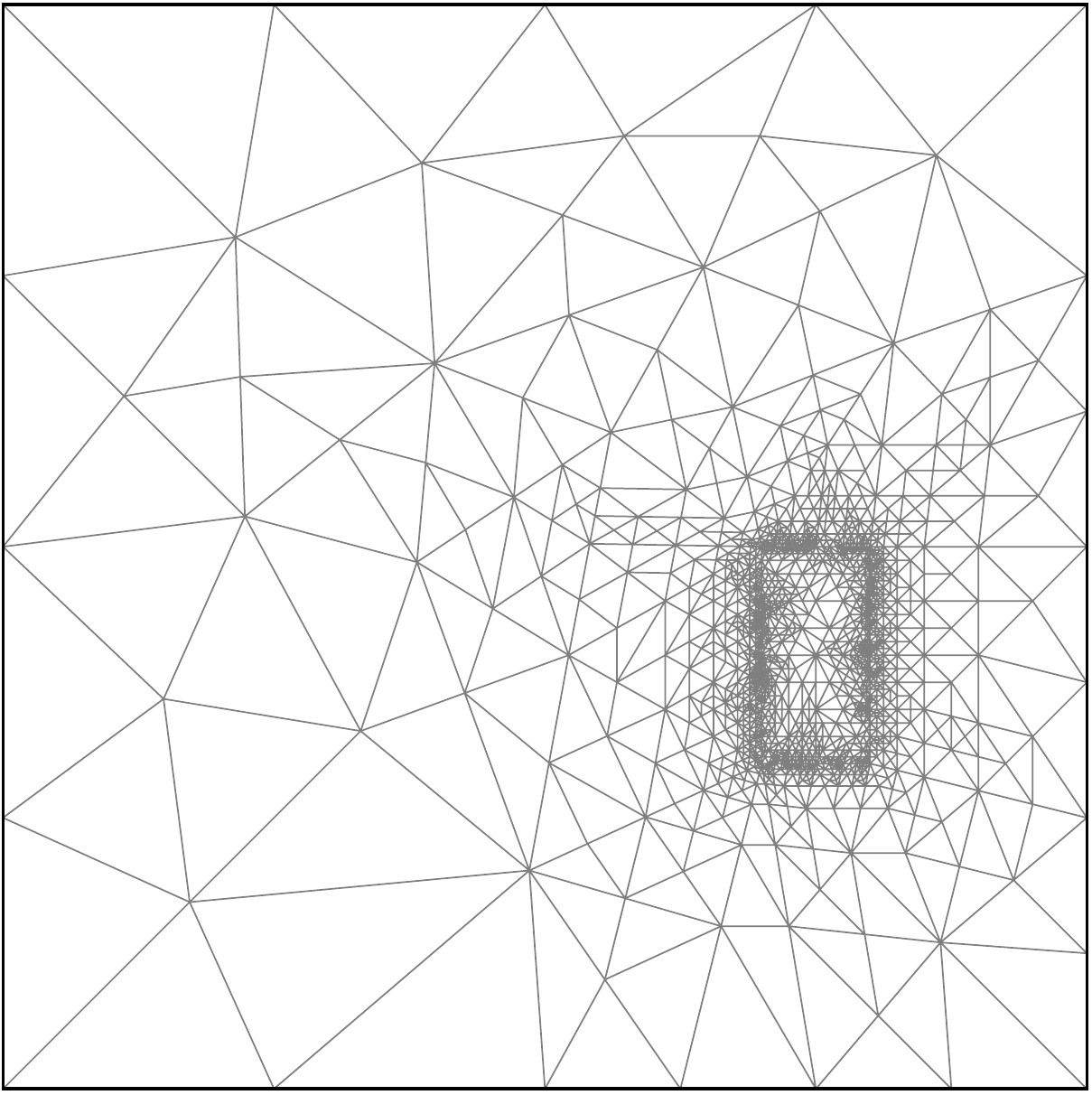}
        \caption{Reaction-dominant ($\gamma = 1000$)}
        \label{fig:ad_re_mesh_gamma_100}
    \end{subfigure}
    \caption{Resulting meshes after eighteenth levels of 
      refinements using the UP scheme with $p=1$ and the goal-oriented error estimate
      $E\big(\varepsilon_h\,,\, \varepsilon_h^\ast\big)$ (see
      Section~\ref{sec:upper_estimates} for definitions).}
    \label{fig:ad_re_mesh_all}
\end{figure}

\subsection{advection-diffusion-reaction problem with dominant advection}

As a final example, we consider a 3D advection-dominated advection-diffusion-reaction problem. Here we explore the performance of the GoA strategies by considering iterative solvers. We employ the iterative solver proposed in~\cite{ bank1989class} (cf.~\cite[Section 5.2.2]{rojas2019adaptive}) to solve the saddle-point problems of steps 1 and 2 in the GoA algorithm (see Section~\ref{sec:GoA_strategy}). We consider a sparse Cholesky (see~\cite{ chen2008algorithm}) as preconditioner for the Gramm matrix associated with the inner product~\eqref{eq:vh_norm}, and the LGMRES algorithm (see~\cite{ baker2005technique}) as preconditioner for the reduced Schur complement. To obtain the GoA estimator 3.A, we obtain $\vdg$ by solving problem~\eqref{eq:dg_goal} employing the LGMRES solver preconditioned with a sparse iLU factorization of the dG matrix. To obtain the GoA estimator 3.B, we obtain $\varepsilon_h^\ast$ by solving~\eqref{eq:error_est_goal} using the same sparse Cholesky preconditioner for the Gramm matrix. 

We particularize the definition of problem~\eqref{eq:scalar}. We denote by $\Vx = (x_1,x_2,x_3)$ the space variable, and set  the physical domain to the unit cube $\Omega = (0,1)^3$. We set $f=0$ as the source term, $\kappa = 10^{-6}$ as the diffusion coefficient, $\Vb = \big(-0.6 \sin(6\pi x_3)\, , \, 0.6 \cos(6\pi x_3) \, , \, 1\big)$ as the velocity field, and $\gamma = 10^{-6}$ as the reaction coefficient. We also define the Dirichlet datum as
\begin{equation*}
u_D = \left\{
\begin{array}{rl}
2+\tanh\big(1000\, c(0.1,(0.35,0.35)) \big)
+\tanh\big( 1000 \, c(0.1,(0.65,0.65)) \big), & \text{ if } x_3=0, \smallskip \\
0, & \text{ elsewhere },
\end{array}
\right.
\end{equation*}
where $c(r,(y_1,y_2)) := r^2-(x_1-y_1)^2-(x_2-y_2)^2$. Here, the boundary datum $u_D|_{x_3=0}$ corresponds to a smooth extension of a boundary source term which is different from zero in the interior of two circumferences, as shown in Figure~\ref{fig:3d_ini_mesh}. 
The solution of this problem is close to the solution of a homogeneous pure advection problem~\eqref{eq:advection_reaction} (i.e, with $\gamma = 0$ and $f=0$) considering the inflow datum $g^- = u_D|_{\Gamma^-}$. Such solution corresponds to two smoothed spirals starting at $x_3 = 0$, and arriving at $x_3 = 1$ (at the same starting position in the XY plane) after three periods of rotation (cf.~\cite{ rojas2019adaptive}). However, the homogeneous outflow boundary condition at $x_3=1$ induces strong boundary layers in the solution of the diffusion problem. Thus, this double spiral solution has strong interior and boundary layers. 

As in the previous examples, the energy-based adaptivity refines the solution to minimize its global error. In this particular problem, the energy estimate first refines around the inflow region, $x_3= 0$ and then follows the velocity field $\Vb$ towards the outflow region $x_3 = 1$. Figure~\ref{fig:3d_type0} shows a solution contour for the eighth level of refinement for a polynomial degree 1 and the initial mesh of Figure~\ref{fig:3d_ini_mesh}. The error estimate in this case reads $\big(\varepsilon_h\, , \, \varepsilon_h\big)_{V_h}$. 

We measure the quantity of interest (QoI) in the cube $\Omega_0 = (0.375\, , \, 0.5)\times (0.625\, , \, 0.75) \times (0.75\, , 0.875)$ as the domain for the QoI. The domain $\Omega_0$ only intersects the trayectory of the spiral startin at the circumference with center $(0.65, 0.65)$ and radius $0.1$. Figure~\ref{fig:3d_typeA} shows the solution contours at its sixth level of refinement guided by the GoA error estimate $\big(\varepsilon_h\, , \vdg-v_h^\ast\big)_{V_h}$. Figure~\ref{fig:3d_typeA} shows the solution contours at its fifth level of refinement guided by the GoA error estimate $\big(\varepsilon_h\, , \varepsilon_h^\ast\big)_{V_h}$. The problem setup is identical to the one described above. Both goal-oriented strategies guide similar mesh refinements of the spiral starting at the circumference of center $(0.6,0.6)$.
Figure~\ref{fig:3d_error} shows the relative error against the cubic root of the total number of DOFs. We show up to nine levels of refinements for polynomial orders $p=1,2$ and $\Delta_p = 0$ for the estimator $E\big(\varepsilon_h\, , \, \vdg-v_h^\ast \big) $ (left), and the estimator $E\big(\varepsilon_h\, , \, \varepsilon_h^\ast\big) $ (right). We compare these values with the respective estimator, and the relative error obtained considering the energy-norm based estimator $E\big(\varepsilon_h\, , \, \varepsilon_h\big)$.  Since the problem has no known analytical solution, we use an overkill simulation as a reference QoI. That is, we use the GoA strategy with polynomial degree $p=3$ and $\Delta_p = 0$ for the discrete spaces. We use the value $q\big(u_{ref}\big) = 1.36701319$ as the reference QoI obtained after ten levels of GoA refinements, requiring a total of $4,662,310$ DOFs to solve the final saddle-point problem. These figures show a significant error reduction in the QoI when comparing the results that the GoA estimates deliver against the energy-based ones. In this case, both GoA strategies deliver optimal convergence rates (see Equation~\eqref{eq:optimal_rate}). The estimator $E\big(\varepsilon_h\, , \, \varepsilon_h^\ast\big) $ has the advantage of requiring a reduced computational effort, since it is obtained by emplying the precomputed preconditioner. Indeed, we observe a reduction up to two orders of magnitude in the computational time required for solving the third problem, when compared with the computational time for obtaining $\vdg$.
\begin{figure}[h]
    \centering
    \begin{subfigure}[b]{0.48\textwidth}
    \includegraphics[width=\textwidth]{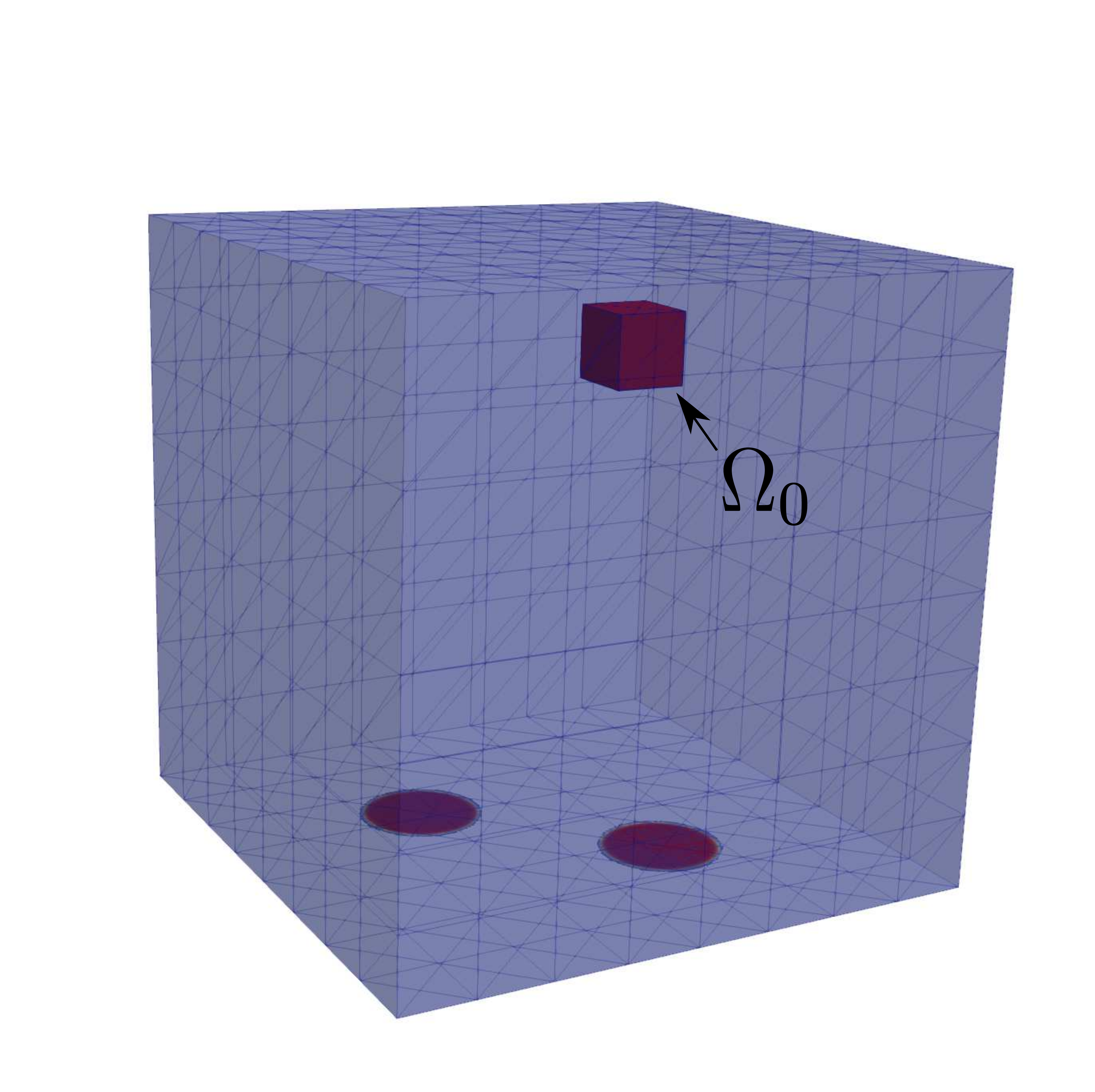}
        \caption{Initial mesh and Dirichlet boundary condition}
        \label{fig:3d_ini_mesh}
    \end{subfigure}
    \begin{subfigure}[b]{0.48\textwidth}
        \includegraphics[width=\textwidth]{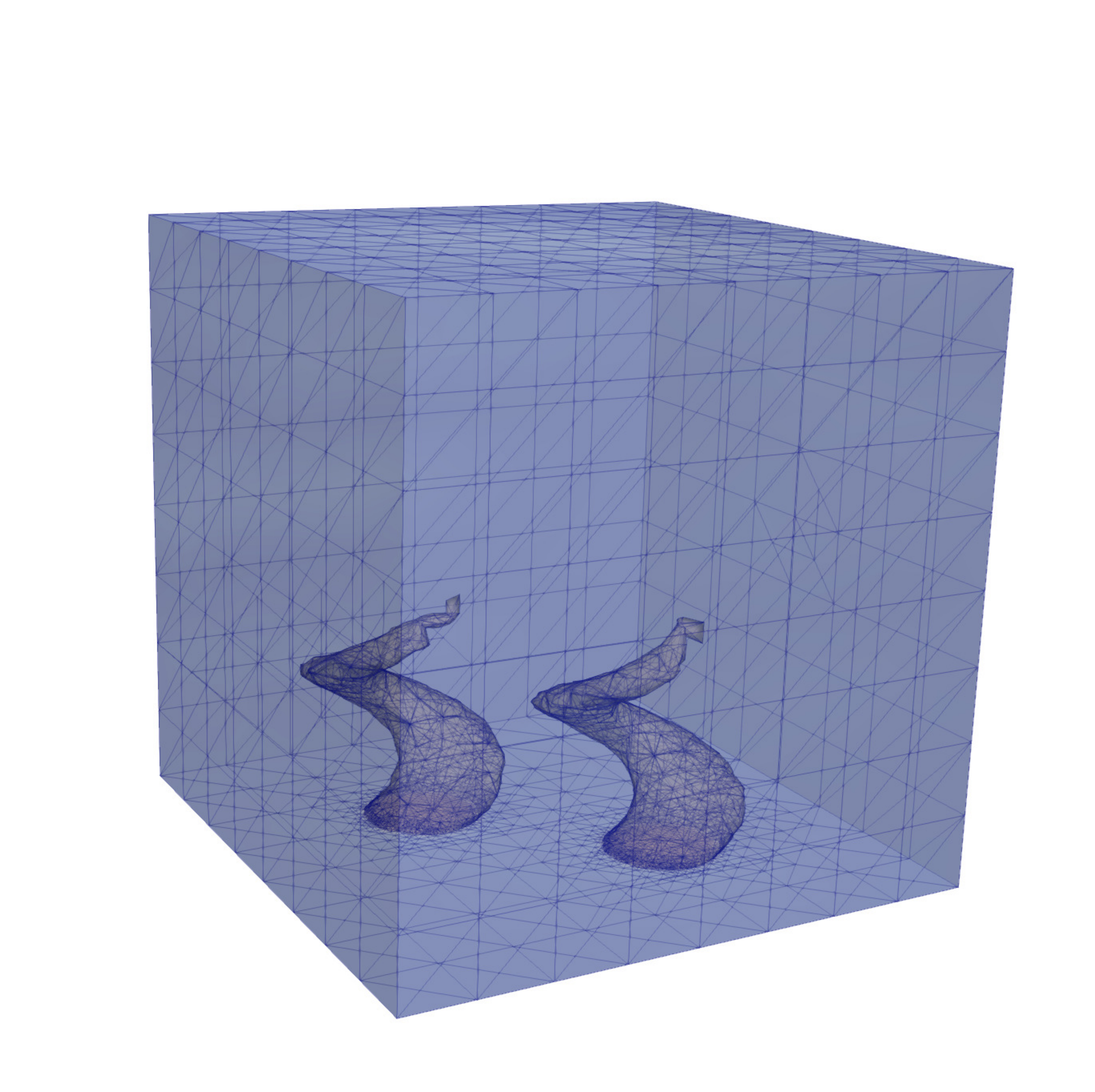}
        \caption{Solution contour for Estimator $E\big(\varepsilon_h,\varepsilon_h\big)$}
       \label{fig:3d_type0}
    \end{subfigure}
\begin{subfigure}[b]{0.48\textwidth}
    \includegraphics[width=\textwidth]{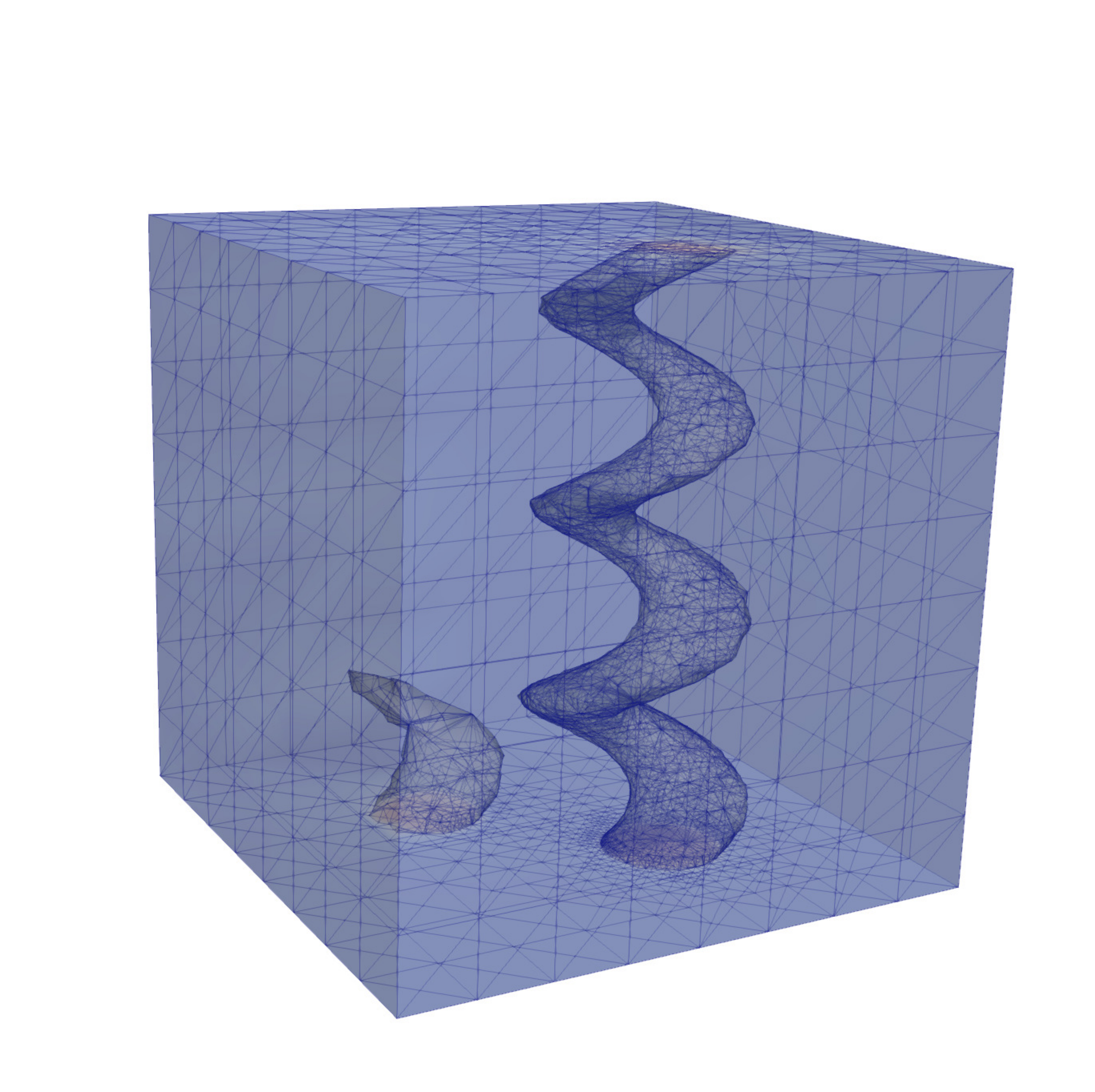}
        \caption{Solution contour for Estimator $E\big(\varepsilon_h,\vdg-v_h^\ast\big)$}
        \label{fig:3d_typeA}
    \end{subfigure}
    \begin{subfigure}[b]{0.48\textwidth}
        \includegraphics[width=\textwidth]{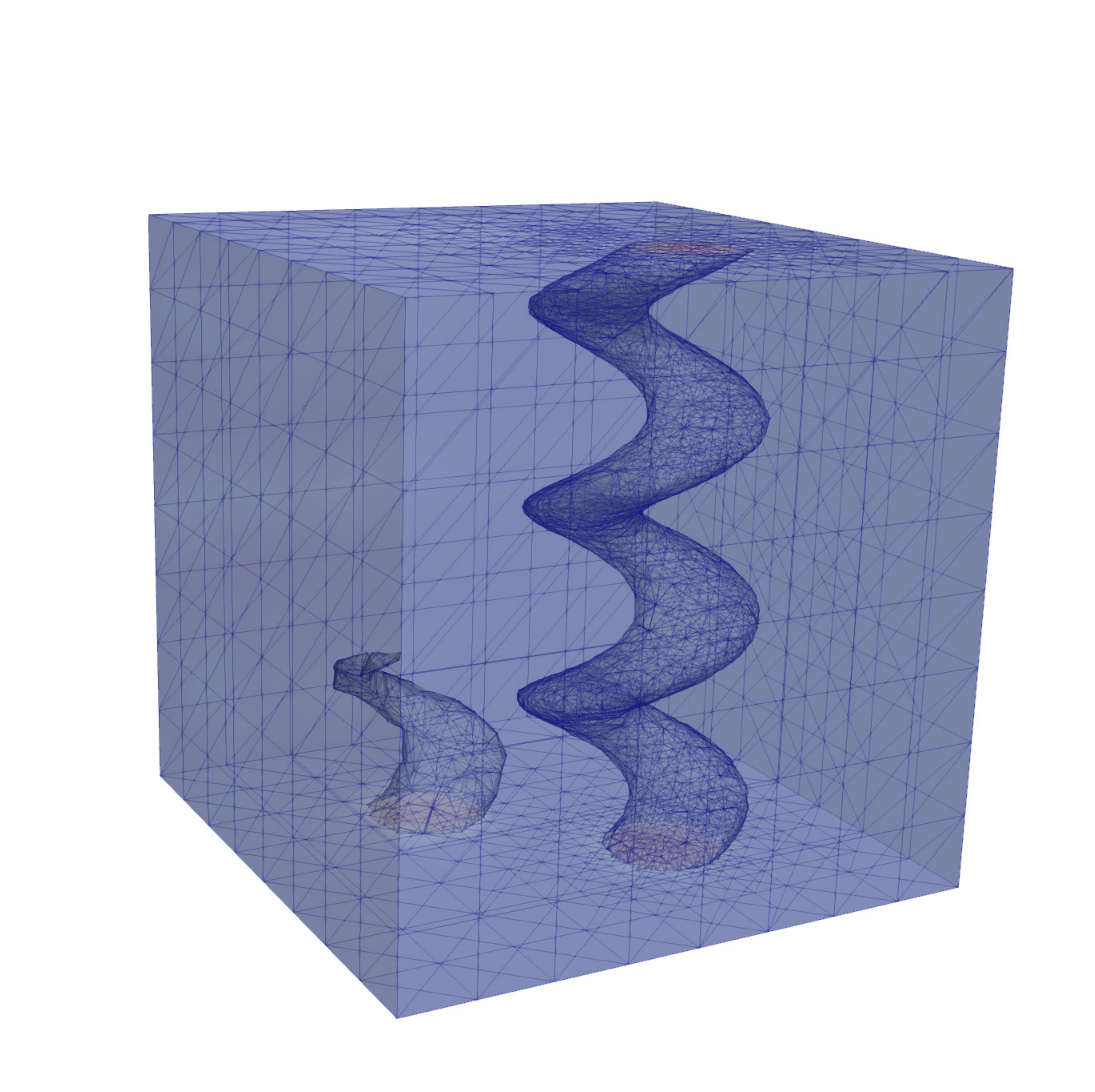}
        \caption{Solution contour for Estimator $E\big(\varepsilon_h,\varepsilon_h^\ast\big)$}   
       \label{fig:3d_typeB}
    \end{subfigure} 
  \caption{Resulting mesh contour comparison:~\eqref{fig:3d_ini_mesh} $\Omega_0$-conforming Initial mesh.~\eqref{fig:3d_type0} Seventh mesh of the adaptive alrgorithm driven by the energy-based estimator $E\big(\varepsilon_h, \varepsilon_h\big)$. ~\eqref{fig:3d_typeA}  Sixth mesh of the adaptive algorithm driven  by the goal-oriented estimator $E\big(\varepsilon_h, \vdg-v_h^\ast\big)$.~\eqref{fig:3d_typeB} Fifth mesh of the adaptive algorithm driven by the goal-oriented estimator $\big(\varepsilon_h, \varepsilon_h^\ast\big)$.}
\end{figure}


\begin{figure}[h]
    \centering
    \begin{subfigure}[b]{0.48\textwidth}
    \includegraphics[width=\textwidth]{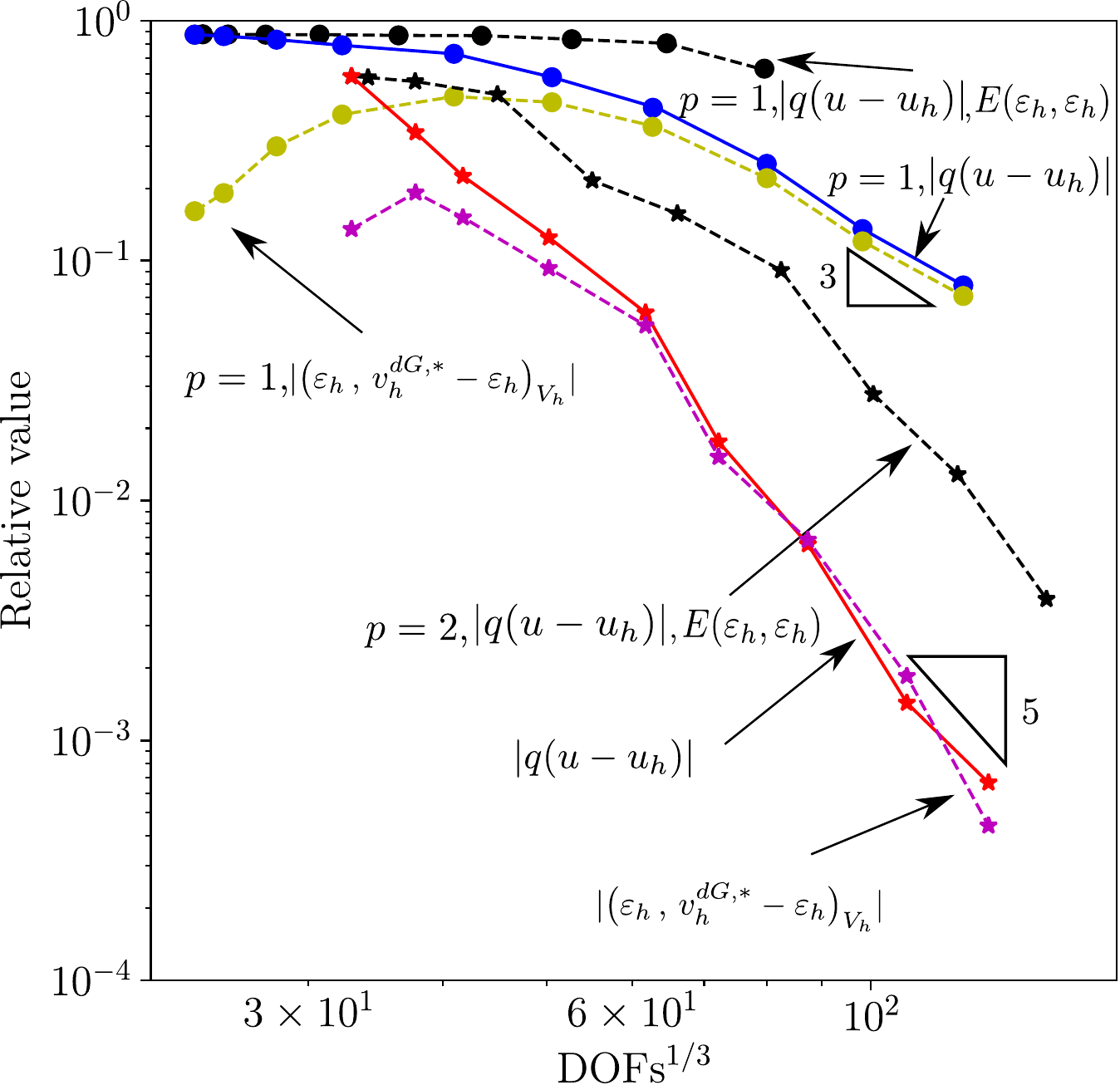}
    \caption{Estimator $E\big(\varepsilon_h\,,\, \vdg-\varepsilon_h\big)$}
        \label{fig:3d_estimatorA}
    \end{subfigure}
    \begin{subfigure}[b]{0.48\textwidth}
        \includegraphics[width=\textwidth]{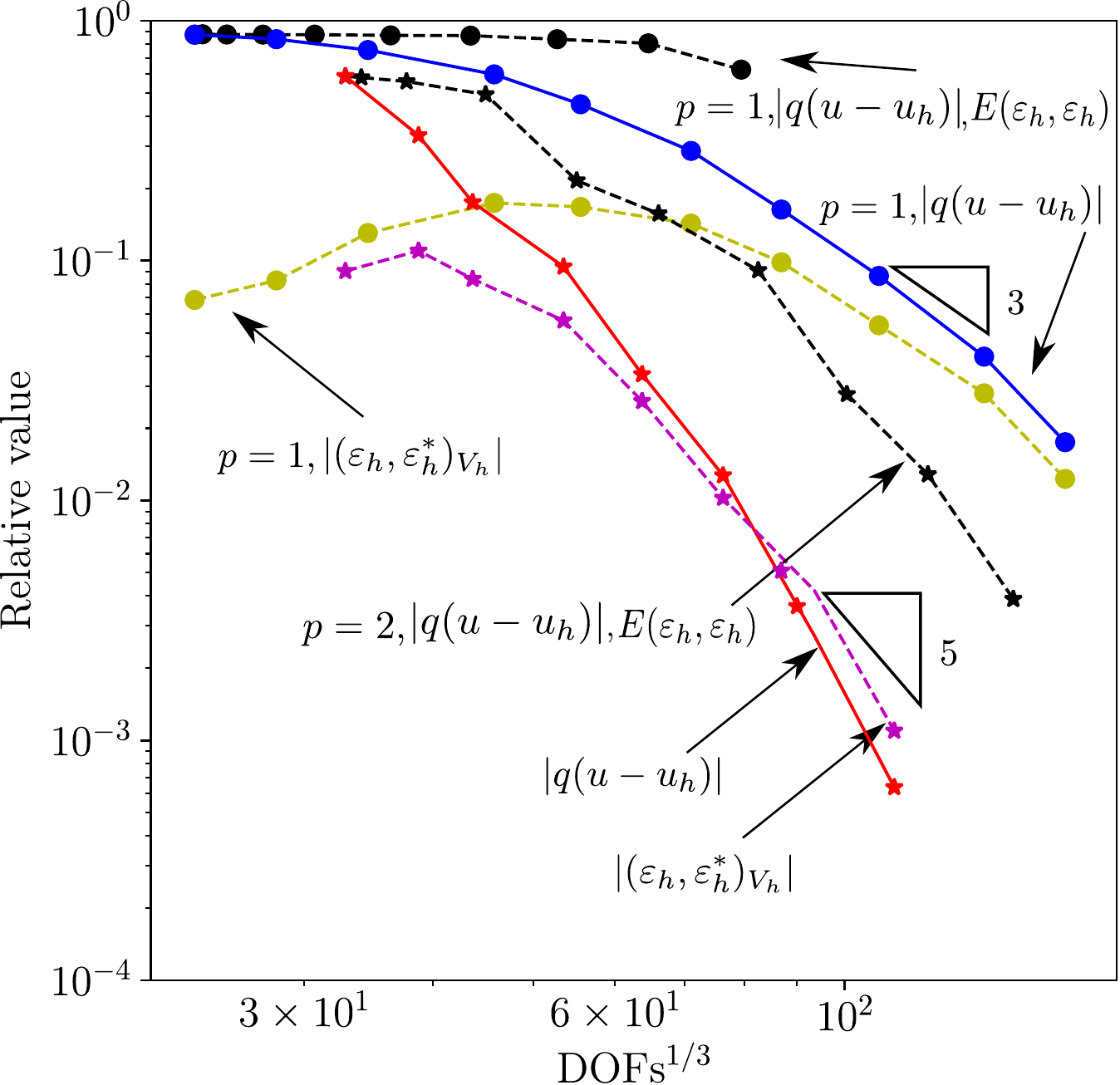}
    \caption{Estimator $E\big(\varepsilon_h\,,\, \varepsilon_h^\ast\big)$}
       \label{fig:3d_estimatorB}
    \end{subfigure}
    \caption{Relative error versus cubic root of the total number of degrees of freedom in the system (DOFs): Adaptivity driven by the energy-based $E\big(\varepsilon_h, \varepsilon_h\big)$ (left) versus goal-oriented $E\big(\varepsilon_h, \varepsilon_h^\ast\big)$ (right) error estimates. Data for nine levels of refinement, polynomial order $p=1,2$.}
   \label{fig:3d_error}
\end{figure}
%
%
%
%

\section{Contributions and future work}

\label{sec:conclusion}

In this paper, we present a new stabilized conforming goal-oriented adaptive method based on the stabilized finite element method introduced in~\cite{ rojas2019adaptive}. The adaptive framework automatically delivers stable solutions for both the direct and adjoint problems. Our process requires the resolution of a third problem. This allows us to compute an error estimate for the quantity of interest robustly. We present two alternative definitions of the third problem. The first definition solves an adjoint discontinuous Galerkin formulation. The second one solves a discrete Riesz representation problem, which inverts a symmetric positive definite matrix using fast approximations. Under a meaningful assumption to be satisfied by the reference discontinuous Galerkin formulation, we prove that both definitions provide an upper bound for the error in the quantity of interest. We validate the superiority of our goal-oriented strategy against an energy-based error estimate numerically for advection-diffusion-reaction problems, showing that both strategies can deliver optimal convergence rates for the error in the quantity of interest. 

Further studies are on the way to explore the performance of the method when applied to other challenging problems. For example, we will study the performance of metal-air electrochemical cells to improve battery storage capacity. We are also pursuing the extension of the methodology to time-dependent problems by considering space-time formulations, and its extension to non-linear goal functionals. 

\section*{Acknowledgements}

This publication was made possible in part by the CSIRO Professorial Chair in Computational Geoscience at Curtin University and the Deep Earth Imaging Enterprise Future Science Platforms of the Commonwealth Scientific Industrial Research Organisation, CSIRO, of Australia. At Curtin University, The Curtin Corrosion Centre, the Curtin Institute for Computation, and The Institute for Geoscience Research (TIGeR) kindly provide continuing support. Additional support was received from the European Union's Horizon 2020 research and innovation programme under the Marie Sklodowska-Curie grant agreement No 777778 (MATHROCKS). David Pardo has received funding from the European POCTEFA 2014-2020 Project PIXIL (EFA362/19) by the European Regional Development Fund (ERDF) through the Interreg V-A Spain-France-Andorra programme, the Project of the Spanish Ministry of Science and Innovation with reference PID2019-108111RB-I00 (FEDER/AEI), the BCAM "Severo Ochoa" accreditation of excellence (SEV-2017-0718), and the Basque Government through the BERC 2018-2021 program, the two Elkartek projects 3KIA (KK-2020/00049) and MATHEO (KK-2019-00085), the grant "Artificial Intelligence in BCAM number EXP. 2019/00432", and the Consolidated Research Group MATHMODE (IT1294-19) given by the Department of Education. Part of this work was carried over while the first author was invited by INRIA SERENA team in Paris. 

\appendix

\bibliography{mybibfile}

\end{document}